\newtheorem{remark}{Remark}[section]
\newtheorem{lemma}{Lemma}[section]
\newtheorem{theorem}{Theorem}[section]
\newtheorem{definition}{Definition}[section]
  \newcommand\figcaption{\def\@captype{figure}\caption}
  \newcommand\tabcaption{\def\@captype{table}\caption}
\begin{document}

\title{A geometric optics ansatz-based plane wave method for two dimensional Helmholtz equations with variable wave numbers}

\author{Qiya Hu}
\author{Zezhong Wang}

\thanks{1. LSEC, ICMSEC, Academy of Mathematics and Systems Science, Chinese Academy of Sciences, Beijing
100190, China; 2. School of Mathematical Sciences, University of Chinese Academy of Sciences, Beijing 100049,
China (hqy@lsec.cc.ac.cn, bater1@yeah.net). This work was funded by Natural Science Foundation of China G12071469.}

\maketitle

{\bf Abstract.} In this paper we develop a plane wave type method for discretization of homogeneous Helmholtz equations with variable wave numbers. In the proposed method, local basis functions (on each element)
are constructed by the geometric optics ansatz such that they approximately satisfy a homogeneous Helmholtz equation without boundary condition. More precisely, each basis function is expressed as the product of an exponential plane wave function and a polynomial function, where the phase function in the exponential function
approximately satisfies the eikonal equation and the polynomial factor is recursively determined by transport equations associated with the considered Helmholtz equation. We prove that the resulting plane wave spaces have high order $h$-approximations as the standard plane wave spaces (which are available only to the case with constant wave number).
We apply the proposed plane wave spaces to the discretization of nonhomogeneous Helmholtz equations with variable wave numbers and establish the corresponding error estimates of their finite element solutions. We report
some numerical results to illustrate the efficiency of the proposed method.

{\bf Key words.} Helmholtz equations, variable wave numbers, plane wave method, error estimates

{\bf AMS subject classifications}. 65N30, 65N55.

\pagestyle{myheadings}
\thispagestyle{plain}
\markboth{}{}

\section{Introduction}
In this paper we consider the following Helmholtz equation with impedance boundary condition
\begin{equation}\label{nonhomogeneousHelm}
\left\{
	\begin{aligned}
	&\mathcal{L} u  = -(\Delta +\kappa^2(\mathbf{r})) u(\omega,\mathbf{r}) = f(\mathbf{r}), \quad \mathbf{r} = (x, y) \in\Omega, \\
	&(\partial_{\mathbf{n}} + i\kappa(\mathbf{r})) u(\omega,\mathbf{r}) = g(\mathbf{r}), \quad \mathbf{r}\in\partial\Omega,
	\end{aligned}
	\right.
\end{equation}
where $\Omega\subset \mathbb{R}^2$ is a bounded Lipchitz domain, $\mathbf{n}$ is the out normal vector on $\partial\Omega$, $f\in L^2(\Omega)$ is the source term and
$\kappa(\mathbf{r})={\omega\over c(\mathbf{r})}$,
$g\in L^2(\partial\Omega)$.
In applications, $\omega$ denotes the frequency and may be large, $c(\mathbf{r})>0$ denotes the light speed, which is usually a variable positive function.
The number $\kappa(\mathbf{r})$ is called the {\it wave number}.

Helmholtz equation is the basic model in sound propagation. It is a very important
topic to design a high accuracy method for Helmholtz equations with large wave numbers,
such that the so called {\it pollution effect} can be reduced. Let $V_h(\Omega)$ denote the finite element space of a finite element method for (\ref{nonhomogeneousHelm}), and let $\|\cdot\|_V$ denotes some ``energy" norm.
Assume that the number of basis functions on every element is fixed. The {\it pollution effect} says that the finite element solution $u_h$ does not satisfy the quasi-optimality
$$ \|u-u_h\|_V\leq C\inf_{v_h\in V_h(\Omega)}\|u-v_h\|_V $$
with a positive constant $C$, unless $h\omega^{1+\delta}$ is bounded for some positive number $\delta$ (in other words, $h\omega\rightarrow 0^+$ when $\omega\rightarrow+\infty$).
This means that the accuracies of the approximations may be destroyed when the wave number $\omega$ increases, unless the mesh sizes $h$ are chosen such that $h\omega=O(\omega^{-\delta})$, which converges zero as $\omega\rightarrow\infty$.
 For convenience, we call the parameter $\delta$ the {\it pollution index}, which describes the degree of pollution effect. For the standard linear finite element method, the {\it pollution index} $\delta=1$ (see \cite{fem}). There are some methods to reduce the {\it pollution index}, for example, the higher order finite element methods (refer to \cite{fem}).
To our knowledge, there seems no {\it pollution-free} (i.e., $\delta=0$) finite element method in literature except that a good approximation of the wave propagation directions are preliminarily known.

In order to compare different discretization methods, we would like to give a new concept. Let the number of basis functions on every element be fixed. A finite element method is called {\it weakly pollution-free} if
$$ \|u-u_h\|_{L^2(\Omega)}\leq C(f,g)(\omega h)^m, $$
where $C(f,g)$ is a constant independent of $\omega$ and $h$, but it may depend on the known functions $f$, $g$ and the number of basis functions on every element; the positive $m$
depends on the regularity of the analytic solution $u$ and the number of basis functions on every element. For a {\it weakly pollution-free} finite element method, the $L^2$ error $\|u-u_h\|_{L^2(\Omega)}$
does not obviously increase when fixing the value of $h\omega$ but increasing $\omega$.
It can be seen, from Theorem 3.15 of \cite{Hiptmair2011}, that the plane wave methods are {\it weakly pollution-free} for {\it homogeneous} Helmholtz equations (and time-harmonic Maxwell equations) with constant (or piecewise constant) wave numbers, provided that the known functions are independent of $\omega$ (which implies that the $\omega$-weighted norm $\|u\|_{k+1,\omega,\Omega}\leq \omega^k$).   
The main reason is that plane wave basis functions are solutions of a homogeneous Helmholtz equation without boundary condition and can capture at the maximum the oscillating characteristic of the analytic solution of the original Helmholtz equation. However, the other finite methods seem not {\it weakly pollution-free}. 

There are many articles to study the plane wave methods for these kinds of equations (see, for example, \cite{Buffa2008,Cessenat1998}, \cite{Farhat2003},\cite{Gabard2007}-\cite{Huqy2018}, \cite{Huttunen2007}, \cite{Monk1999},\cite{Perugia2016}). The plane wave methods have three drawbacks:
(i) the condition numbers of the resulting discrete systems increase too fast when the scales of the systems become large;
(ii) the standard plane wave methods are not {\it weakly pollution-free} for the nonhomogeneous Helmholtz equations;
(iii) the standard plane wave methods are not applicable to the Helmholtz equations with general variable wave numbers. The first drawback can be moved to some extent by constructing
efficient preconditioners for the discrete systems (see, for example, \cite{HuZ2016, Shu2018}). Fortunately, for the plane wave methods the condition numbers of the preconditioned systems can significantly decrease even if using simple domain decomposition preconditioners (see \cite{HuZ2016, HuZ2017}).
Recently (see \cite{Huqy2018,Yuan2019}), a plane wave method combined
with local spectral elements was proposed for the discretization of the nonhomogeneous Helmholtz equation (and
time-harmonic Maxwell equations) with (piecewise) constant wave numbers. The basic ideas in this method can be described as follows. At first nonhomogeneous Helmholtz equations on small subdomains
are discretized in the space consisting of higher order polynomials, then the resulting residue Helmholtz equation (which is homogeneous on each
element) on the global solution domain is discretized by the plane wave method. The method is also {\it weakly pollution-free} for the nonhomogeneous Helmholtz equations.
By using this method, we need only to study plane wave method for Helmholtz equations that are homogeneous on every element. Then
we can simply consider the homogeneous Helmholtz equation (on each element)
\begin{equation}\label{homohelm}
(\Delta +\kappa^2(\mathbf{r})) u(\omega,\mathbf{r}) =0, \quad \mathbf{r} = (x, y) \in\Omega.
\end{equation}

When $\kappa(\mathbf{r})$ is not a piecewise constant function (i.e., $c(\mathbf{r})$ is a general positive function), it is impossible in most cases to get ``exact" plane wave basis functions that are analytic solution of
(\ref{homohelm}) on one element. The first attempt to investigate plane wave method for the case of variable wave numbers was done in \cite{Imbert2014}, where basis functions were designed to locally satisfy an approximated version of
(\ref{homohelm}), and the resulting finite element method was called {\it general plane wave} (GPW) method.
The $h$-convergence of the approximate solutions generated by the GPW method was established in \cite{Imbert2015}. A modified Trefftz Discontinuous Galerkin (TDG) scheme of GPW methods for solving the homogeneous equation (\ref{homohelm}) was studied in \cite{Imbert2017}. The results given in \cite{Imbert2014,Imbert2015,Imbert2017} indicate that the GPW methods are not {\it weakly pollution-free} even for homogeneous Helmholtz equations.

In this paper
we design new kinds of plane wave type basis functions for the discretization of the Helmholtz equation (\ref{homohelm}) with a variable wave number $\kappa$. The key idea is to
use the {\it geometric optics ansatz}, i.e., the basis function is prior chosen to have the same formulation as the geometric optics ansatz of the wave field. Based on this idea, we construct basis functions such that each
of them is expressed as the product of an exponential plane wave function and a polynomial function, where the phase function in the exponential function
approximately satisfies the eikonal equation and the polynomial factor is recursively determined by transport equations derived by
(\ref{homohelm}). We prove that the plane wave spaces spanned by them possess high order $h$-approximate properties as the standard plane wave spaces (which are available only to the case of constant wave numbers). Then, by combining the ideas proposed in \cite{Huqy2018}, we apply the constructed plane wave spaces to the discretization of the nonhomogeneous Helmholtz equation (\ref{nonhomogeneousHelm}) and give the corresponding error estimates of the approximations, which indicate that the proposed methods are {\it weakly pollution-free}. We test several examples to confirm the efficiency of the proposed methods.

The paper is organized as follows. In Section \ref{GOPWs}, we derive the expressions of the new plane wave type basis functions. The approximate properties of the resulting finite element spaces
are proved in Section \ref{interperrest}. In Section \ref{variation}, we describe a PWDG type method combined with local spectral elements to solve nonhomogeneous Helmholtz equation (\ref{nonhomogeneousHelm}) and
give the corresponding error estimates. Finally, we report some numerical results of the proposed methods in Section \ref{numerical}.

\section{Geometric Optics Plane Wave (GOPW)}\label{GOPWs}

For a given $h>0$, we divide the domain $\Omega$ into a union of quasi-uniformly polygonal elements with the size $h$. Let $\mathcal{T}_h$ denote the resulting partition.
 The barycenter of an
element $K_0\in\mathcal{T}_h$ is denoted by $\mathbf{r}_0 = (x_0, y_0)\in\mathbb{R}^2$.
For convenience, we separate $\omega$ from $\kappa$: $\kappa(\mathbf{r})=\omega\sqrt{\xi(\mathbf{r})}$ with $\xi(\mathbf{r})= 1/c^2(\mathbf{r})$.

Let us recall the plane wave methods for Helmholtz equation (\ref{homohelm}). For a given positive integer $p$, a plane wave basis function $\varphi_l=e^{i\kappa\tau_l({\bf r})}$ ($1\leq l \leq p$) adapted to (\ref{homohelm}) satisfies $\mathcal{L} \varphi_l = 0$, and if $c(\mathbf{r})$ is a constant, we can choose $\tau_l({\bf r})={\bf d}_l\cdot{\bf r}$ with
${\bf d}_l=(\cos\theta_l,\sin\theta_l)$, which is called a plane wave direction. However, for the case that $c(\mathbf{r})$ is a variable function, such a function $\tau_l$ cannot be gotten directly. Because of this, it was proposed in \cite{Imbert2014} to construct $\varphi_l$ such that $\varphi_l$ has the form $\varphi_l=e^{P_l(\omega,{\bf r})}$ (where $P_l(\omega,{\bf r})$ is a polynomial, whose coefficients contain $\omega$) and
it locally satisfies an approximate version of the governing equation (\ref{homohelm})
\begin{equation} \label{homoapp}
	|\mathcal{L} \varphi_l|\leq  C h^{q}
\end{equation}
on every element, where $q$ is a given positive integer. The polynomial $P_l(\omega,{\bf r})$ determined by (\ref{homoapp}) cannot be written as $P_l(\omega,{\bf r})=i\kappa\tau_l({\bf r})$, so
the resulting {\it generalized plane wave} (GPW) method does not possess the desired approximation. If replacing $P_l$ by $i\kappa\tau_l({\bf r})$, then the positive number
$C$ in (\ref{homoapp}) depends on $\omega$, namely, $C=C(\omega)$, which is an increasing function of $\omega$, and the accuracies of the approximate solutions generated by the GPW method are destroyed by the ``bad" factor $C(\omega)$.

The purpose of this paper is to construct new plane wave basis functions by using the geometric optics ansatz such that the resulting discrete space possesses better approximation.

\subsection{Geometric optics ansatz}\label{goa}

Assume that the solution of the equation (\ref{homohelm}) can be approximated by a simple wave. According to the geometric optics ansatz \cite{Engquist2003}, the solution of (\ref{homohelm}) can be expressed as the WKJB approximation \cite{Jeffreys1925} (or the L{\"u}neberg-Kline expansion \cite{Kline1962}):
\begin{equation}\label{geoana}
	u(\omega,\mathbf{r}) = e^{i\omega\phi(\mathbf{r})} A(\mathbf{r}) ,
\end{equation}
where $\phi$ is called the phase function satisfying the eikonal equation
\begin{equation} \label{eik}
	|\nabla\phi(\mathbf{r})|^2 = \xi(\mathbf{r}),
\end{equation}
and $A(\mathbf{r})$ is called the amplitude function that can be written as
\begin{equation}
	A(\mathbf{r}) = \sum_{s = 0}^{\infty}\frac{A_s(\mathbf{r})}{(i\omega)^s}
\end{equation}
with $\{A_s\}_{s = 0}^{\infty}$ satisfying a recursive system of PDEs:
\begin{equation} \label{tran}
	2\nabla\phi\cdot\nabla A_s + A_s \Delta\phi = -\Delta A_{s-1}
\end{equation}
for $s = 0, 1, \cdots$, with $A_{-1} \equiv 0$.

The key features of the geometric optics ansatz are:
\begin{itemize}
	\item $\{A_s\}_{s = 0}^{\infty}$ and $\phi$ are independent of the frequency $\omega$;
	\item $\{A_s\}_{s = 0}^{\infty}$ and $\phi$ depend on $c(\mathbf{r})$ (and $f(\mathbf{r})$ if (\ref{nonhomogeneousHelm}) is considered).
\end{itemize}

When more waves are involved in the solution of the equation (\ref{homohelm}), the generic solution of (\ref{homohelm}) should be locally defined as a finite sum of terms like (\ref{geoana}). Hence, in general crossing waves, we use $N(\mathbf{r})$ to denote the number of crossing waves at the position $\mathbf{r}$ and expresse the solution of the Helmholtz equation (\ref{homohelm}) as
\begin{equation}\label{geoanaN}
	u(\omega,\mathbf{r}) = \sum_{n = 1}^{N(\mathbf{r})} u_n(\omega,\mathbf{r}),
\end{equation}
where each $u_n(\omega,\mathbf{r})$ has its ansatz form as (\ref{geoana})
\begin{equation}\label{geoanan}
	u_n(\omega,\mathbf{r}) =A_n({\bf r})e^{i\omega\phi_n(\mathbf{r})} = e^{i\omega\phi_n(\mathbf{r})}\sum_{s = 0}^{\infty}\frac{A_{n,s}(\mathbf{r})}{(i\omega)^s}.
\end{equation}
And for the $n$-wave ansatz ($n = 1,\cdots,N(\mathbf{r})$), the $\omega$-independent phase function $\phi_n(\mathbf{r})$ and $\{A_{n,s}(\mathbf{r})\}_{s = 0}^{\infty}$ satisfy the eikonal equation (\ref{eik}) and the corresponding system (\ref{tran}) respectively.

\subsection{Construction of plane wave type basis functions}
Motivated by the ansatz (\ref{geoana}), we define a plane wave type basis function as $\varphi(\mathbf{r})= a(\mathbf{r})e^{i\omega\tau(\mathbf{r})}$, where $\tau$ is a real polynomial approximately satisfying
(\ref{eik}) and $a$ is a complex polynomial approximately satisfying a transport equation derived by the governing equation (\ref{homohelm}) (setting $u=\varphi$).

Replacing $u$ in (\ref{homohelm}) with $\varphi = a(\mathbf{r})e^{i\omega\tau(\mathbf{r})}$, we get
$$ \mathcal{L} \varphi = [-\Delta a - i\omega(2\nabla a \cdot \nabla\tau + a\Delta\tau) + \omega^2a(\xi - |\nabla\tau|^2)]e^{i\omega\tau}=0, $$
which is equivalent to
$$ -\Delta a - i\omega(2\nabla a \cdot \nabla\tau + a\Delta\tau) + \omega^2a(\xi - |\nabla\tau|^2)=0. $$
When $\xi$ is a variable function, the functions $a$ and $\tau$ satisfying the above equation cannot be obtained directly. We consider a generic element $K_0$ with the diameter $h$ satisfying $\omega h=O(1)$,
and we try to find two polynomial functions $a$ and $\tau$ such that
$$ |-\Delta a - i\omega(2\nabla a \cdot \nabla\tau + a\Delta\tau) + \omega^2a(\xi - |\nabla\tau|^2)|\leq Ch^q,\quad \mbox{on}~~ K_0$$
for a given positive integer $q$. It is clear that the above inequality can be satisfied if
$$ |-\Delta a - i\omega(2\nabla a \cdot \nabla\tau + a\Delta\tau)|\leq Ch^q\quad\mbox{and}\quad |\xi - |\nabla\tau|^2|\leq Ch^{q+2},\quad \mbox{on}~~ K_0.$$
For convenience, we write the above inequalities in the form
\begin{subequations} \label{recdwhole}
	\begin{equation} \label{recd}
	\Delta a + i\omega(2\nabla a \cdot \nabla\tau + a\Delta\tau) = \mathcal{O}(h^{q})\quad \mbox{on}~~ K_0
	\end{equation}
and
	\begin{equation}\label{recdeik}
	\xi - |\nabla\tau|^2 = \mathcal{O}(h^{q+2})\quad \mbox{on}~~ K_0.
	\end{equation}
\end{subequations}

Let the polynomial $a$ and $\tau$ on $K_0$ be written as
\begin{equation}
	\begin{aligned}
	a(x,y) &= \sum_{i+j=0}^{m_a} a_{i,j} (x - x_0)^i (y - y_0)^j, \\
	\tau(x,y) &= \sum_{i+j=1}^{m_{\tau}} \lambda_{i,j} (x - x_0)^i (y - y_0)^j,
	\end{aligned}
\end{equation}
where $(x_0,y_0)$ is the barycenter of $K_0$. At first we can use the Taylor formula of $\xi$ to determine the coefficients of $\tau$ such that (\ref{recdeik}) is satisfied. Then we
use the expression of $\tau$ to determine the coefficients of $a$ such that (\ref{recd}) is met.
In the following two parts we give more details of the definitions of $a$ and $\tau$.

\subsubsection{Construction of $\tau(x,y)$.}\label{directau}

Consider an element $K_0$, and set ${\bf r}=(x,y)$ and ${\bf r}_0=(x_0,y_0)$. Assume that the function $\xi$ is smooth enough. We first let $q=1$ or $q=2$.
By the Taylor formula, the function $\xi$ can be expressed as
$$ \xi({\bf r})=T_{q+1}({\bf r}-{\bf r}_0)+\varepsilon_{q+2}({\bf r}-{\bf r}_0), \quad {\bf r}\in K_0,$$
where $T_{q+1}({\bf r}-{\bf r}_0)$ is the ($q+1$)-order Taylor polynomial of $x-x_0$ and $y-y_0$, the Taylor remainder $\varepsilon_{q+2}({\bf r}-{\bf r}_0)$ satisfies
$\varepsilon_{q+2}({\bf r}-{\bf r}_0)=\mathcal{O}(h^{q+2})$ (${\bf r}\in K_0$). We choose $m_{\tau}=q+2$ in the polynomial $\tau(x,y)$,
then $|\nabla \tau|^2$ is a polynomial of degree $2(q+1)$ of $x-x_0$ and $y-y_0$. Let $|\nabla \tau|^2$ be decomposed into
$$ |\nabla \tau|^2({\bf r})= \tilde{\tau}_{q+1}({\bf r}-{\bf r}_0)+\epsilon({\bf r}-{\bf r}_0), \quad {\bf r}\in K_0,$$
where $\tilde{\tau}_{q+1}$ is a ($q+1$)-order polynomial of $x-x_0$ and $y-y_0$, but $\epsilon$ satisfies $\epsilon({\bf r}-{\bf r}_0)=\mathcal{O}(h^{q+2})$ ~~$({\bf r}\in K_0)$.
It is clear that the requirement (\ref{recdeik}) is satisfied if
\begin{equation}
\tilde{\tau}_{q+1}({\bf r}-{\bf r}_0)=T_{q+1}({\bf r}-{\bf r}_0).\label{tau}
\end{equation}
Then we can compute the coefficients $\lambda_{i,j}$ of $\tau(x,y)$ by the above equation.

In fact, by the method of undetermined coefficients, the equation (\ref{tau}) can be written into the following equivalent algebraic systems
\begin{itemize}
	\item $i+j=1$: $\lambda_{1,0}^2 + \lambda_{0,1}^2 = \xi(\mathbf{r}_0)$,
	\item $i+j=2$:
\begin{equation*}
			\begin{bmatrix} 4\lambda_{1,0} & 2\lambda_{0,1} & 0 \\ 0 & 2\lambda_{1,0} & 4\lambda_{0,1} \\ \end{bmatrix}
			\begin{bmatrix} \lambda_{2,0}\\ \lambda_{1,1}\\ \lambda_{0,2}\\ \end{bmatrix}
			=
			\begin{bmatrix} \xi_{x}(\mathbf{r}_0)\\ \xi_{y}(\mathbf{r}_0)\\	 \end{bmatrix},
		\end{equation*}
	\item $i+j=3$:
\begin{equation*}
			\begin{bmatrix} 6\lambda_{1,0} & 2\lambda_{0,1} & 0 & 0 \\ 0 & 4\lambda_{1,0} & 4\lambda_{0,1} & 0 \\ 0 & 0 & 2\lambda_{1,0} & 6\lambda_{0,1} \\ \end{bmatrix}
			\begin{bmatrix} \lambda_{3,0}\\ \lambda_{2,1}\\ \lambda_{1,2}\\ \lambda_{0,3}\\ \end{bmatrix}
			=
			\begin{bmatrix} \frac{\xi_{xx}(\mathbf{r}_0)}{2} - 4\lambda_{2,0}^2 - \lambda_{1,1}^2\\ \xi_{xy}(\mathbf{r}_0)-4\lambda_{20}\lambda_{11}-4\lambda_{02}\lambda_{11}\\ \frac{\xi_{yy}(\mathbf{r}_0)}{2} - \lambda_{1,1}^2 - 4\lambda_{0,2}^2\\ \end{bmatrix}.
		\end{equation*}
\end{itemize}
For every integer $k>3$, the system determining all $\lambda_{i,j}$ for $i+j=k$ corresponds to a $k\times (k+1)$ coefficient matrix and can be similarly given (we omit the concrete form of these algebraic systems).

Notice that each coefficient matrix in the above algebraic systems is not square matrix, whose column is more than row, but is full-row-rank. Hence these systems have solutions but
the solutions $\{\lambda_{i,j}, i+j =k\}$ are not unique for $k=1,2,\cdots$.
Inspired by the construction of the classical plane wave basis functions, we choose $p$ plane wave directions $\{\mathbf{d}_l = (\cos\theta_l,\sin\theta_l)\}_{l = 1}^p$, and
define $(\lambda^l_{1,0},\lambda^l_{0,1}) = \sqrt{\xi(\mathbf{r}_0)}(\cos\theta_l,\sin\theta_l)$ ($l=1,\cdots, p$).
For each $(\lambda^l_{1,0},\lambda^l_{0,1})$, we can recursively compute a particular solution of the coefficients $\{\lambda_{i,j}, i+j =k\}$ for $k=2,3,\cdots, m_{\tau}$ by the above systems. Thus
we obtain $p$ different choices of the polynomial $\tau$ and we denote them by $\{\tau_l\}_{l = 1}^{p}$, which constitutes a set of $p$ independent phases.

As we will see that $q$ may weakly depend on $h$ and $\omega$, and $q$ may be large when $\omega h$ is very small. In this situation, we have to increase the order of the polynomial $\tau$ to
eliminate the effects of the factor $q$ in the coefficients of $\nabla \tau$.
Assume that $q$ satisfies $q\leq (\omega h)^{-{1\over 2}}$ when $\omega h\rightarrow 0$ (for a large $\omega$).
We write $\xi({\bf r})$ as
$$ \xi({\bf r})=T_{q+2}({\bf r}-{\bf r}_0)+\varepsilon_{q+3}({\bf r}-{\bf r}_0), \quad {\bf r}\in K_0,$$
where $T_{q+2}({\bf r}-{\bf r}_0)$ is the ($q+2$)-order Taylor polynomial of $x-x_0$ and $y-y_0$, the Taylor remainder $\varepsilon_{q+3}({\bf r}-{\bf r}_0)$ satisfies
$\varepsilon_{q+3}({\bf r}-{\bf r}_0)=\mathcal{O}(h^{q+3})$ (${\bf r}\in K_0$). We choose $m_{\tau}=q+3$ in the polynomial $\tau(x,y)$,
then $|\nabla \tau|^2$ is a polynomial of degree $2(q+2)$ of $x-x_0$ and $y-y_0$. Let $|\nabla \tau|^2$ be decomposed into
$$ |\nabla \tau|^2({\bf r})= \tilde{\tau}_{q+2}({\bf r}-{\bf r}_0)+\epsilon({\bf r}-{\bf r}_0), \quad {\bf r}\in K_0,$$
where $\tilde{\tau}_{q+2}$ is a ($q+2$)-order polynomial of $x-x_0$ and $y-y_0$, but $\epsilon$ satisfies (notice that the coefficients of $\nabla\tau$ contain $q$)
$$ \epsilon({\bf r}-{\bf r}_0)=\mathcal{O}(q^2h^{q+3})=\mathcal{O}(h^{q+2})\quad ({\bf r}\in K_0) $$
since $q\leq h^{-{1\over 2}}$. It is clear that the requirement (\ref{recdeik}) is satisfied if $\tilde{\tau}_{q+2}({\bf r}-{\bf r}_0)=T_{q+2}({\bf r}-{\bf r}_0)$.
The polynomial $\tau$ can be determined by the previous method.

\subsubsection{Construction of $a(x,y)$}\label{direca}

Since the left side of (\ref{recd}) contains $\omega$, the positive number $C$ in the bound of $\mathcal{O}(h^{q})$ in the right side of (\ref{recd}) generally depend on $\omega$.
Notice that the condition $h \omega=O(1)$ is a basic assumption in the numerical analysis of Helmholtz equations. Then we hope to construct a polynomial $a$ such that
$$  |\Delta a+i\omega(2\nabla a \cdot \nabla\tau + a\Delta\tau)|\leq C(h\omega)h^q, \quad \mbox{on}~K_0, $$
where $C(h\omega)$ is a positive number only depending on $h\omega$. To this end, we have to avoid to globally consider $\Delta a+i\omega(2\nabla a \cdot \nabla\tau + a\Delta\tau)$ for the
construction of the polynomial $a$.

At first we consider the cases with $q=1,2$. For these cases, we need only to construct a polynomial $a$ such that
	\begin{equation} \label{way1}
		|\Delta a|\leq Ch^{q}\quad\mbox{and} \quad |2\nabla a \cdot \nabla\tau + a\Delta\tau|\leq Ch^{q+1}.
	\end{equation}
We choose the order $m_a$ of polynomial $a$ as $m_a=q+1$ and transform the above two inequalities into two equations as in Subsection 2.2.1. Since $\Delta a$ is a polynomial of the $q-1$ degree, the first inequality
is equivalent to $\Delta a=0$. Moreover, the second inequality can be guaranteed if lower order terms contained in $2\nabla a \cdot \nabla\tau + a\Delta\tau$
vanish (the orders of the vanishing terms are less than $q+1$). Then we can use the method of undetermined coefficients
to determine two polynomials $a$ (when $q=1$) or one polynomial $a$ (if $q=2$) satisfying $a({\bf r}_0)=1$ by the derived two equations.

Next we consider the general case with $q\geq 3$. We find that the previous method is not applicable yet for this general case since the number of unknowns is less than the number of algebraic equations. Thus, for $q\geq 3$,
we have to use the recursive PDEs (\ref{tran}) to construct the desired polynomials $a$.

Choose $n_q=q-2$ or $n_q=q-1$ and define $a = \sum_{s= 0}^{n_q}\frac{a_s}{(i\omega)^s}$ with $a_s$ being a polynomial with the degree $q+1-s$. Then
\begin{eqnarray*}
\Delta a + i\omega(2\nabla a \cdot \nabla\tau + a\Delta\tau)&=&\sum_{s= 0}^{n_q}\frac{1}{(i\omega)^s}\Delta a_s+\sum_{s = 0}^{n_q}\frac{iw}{(i\omega)^s}(2\nabla a_s \cdot \nabla\tau + a_s\Delta\tau)\cr
&=&\sum_{s = 0}^{n_q-1}\frac{1}{(i\omega)^s}\Delta a_s+\sum_{s = 1}^{n_q}\frac{iw}{(i\omega)^s}(2\nabla a_s \cdot \nabla\tau + a_s\Delta\tau)\cr
&+&\frac{1}{(i\omega)^{n_q}}\Delta a_{n_q}+i\omega(2\nabla a_0 \cdot \nabla\tau + a_0\Delta\tau)\cr
&=&\sum_{s= 1}^{n_q}\frac{1}{(i\omega)^{s-1}}(2\nabla a_s \cdot \nabla\tau + a_s\Delta\tau+\Delta a_{s-1})\cr
&+&i\omega(2\nabla a_0 \cdot \nabla\tau + a_0\Delta\tau)+\frac{1}{(i\omega)^{n_q}}\Delta a_{n_q}.
\end{eqnarray*}
Then the condition (\ref{recd}) can be satisfied if $a_s$ ($s=0,1,\cdots,n_q$) are recursively determined by
$$
			2\nabla a_0 \cdot \nabla\tau + a_0 \Delta\tau = \mathcal{O}(h^q\omega^{-1})
$$
and (if~~$n_q\geq 1$)
$$
			2\nabla a_s \cdot \nabla\tau + a_s \Delta\tau + \Delta a_{s-1} = \mathcal{O}(h^q\omega^{s-1}), \quad s=1,2,\cdots, n_q
$$
and
$$			\Delta a_{n_q} = \mathcal{O}(h^q\omega^{n_q}).
$$
Notice that $q$ may be large when $\omega h$ is very small, and the coefficients in the left rights of the above relations contain the factors as $q^2$, so we need to eliminate the effects of the factors $q^2$.
For $s=1,2,\cdots, n_q$, let $q_s$ denote the smallest positive integer satisfying $q^2 h^{q_s}\leq h^q\omega^{s-1}$. Moreover, let $q^*$ denote the smallest positive integer satisfying $q^2 h^{q^*}\leq h^q\omega^{n_q}$.
Notice that $\omega h=O(1)$, we have $q_s\geq q+1-s$ and $q^*\geq q-n_q$. The previous three conditions can be deduced by (since $q^2h\leq \omega^{-1}$)
	\begin{subequations}

		\begin{equation} \label{a0}
			2\nabla a_0 \cdot \nabla\tau + a_0 \Delta\tau = \mathcal{O}(q^2h^{q+1})
		\end{equation}
and (if~~$n_q\geq 1$)
		\begin{equation}\label{as}
			2\nabla a_s \cdot \nabla\tau + a_s \Delta\tau + \Delta a_{s-1} =\mathcal{O}(q^2h^{q_s})
\quad s=1,2,\cdots, n_q
		\end{equation}		
and
		\begin{equation}\label{aq}
			\Delta a_{n_q} =\mathcal{O}(q^2h^{q^*})
		\end{equation}		
	\end{subequations}
Noting that the order of the polynomial $\Delta a_{n_q}$ is $q-n_q-1$, the equality (\ref{aq}) is equivalent to the equation
$$ 	\Delta a_{n_q} =0. $$

The equalities (\ref{a0}) and (\ref{as}) need to be transformed into two systems of equations as in Subsection 2.2.1. For example, the equations of (\ref{as}) are defined such that
all the polynomial terms whose orders are less than $q_s$ vanish.

If we only use these equations
to recursively compute $a_r$, the freedom degrees of $a_r$ may be very large. For example, the number of coefficients of $a_0$ is $(q+2)(q+3)/2$, but there are only $(q+1)(q+2)/2$ equations in (\ref{a0}). Because of this, we add extra constrains to reduce the degrees of freedom of $a_s$ ($s=0,\cdots, n_q-1)$:
\begin{equation}
	\left\{\begin{aligned}
		& (\Delta a_s)|_{{\bf r}=\mathbf{r}_0}= 0,\\
		& (\partial_{x}^{k_1} \partial_{y}^{k_2} \Delta a_s)(\mathbf{r}_0)= 0, \quad \forall k_1 + k_2 =q_s-2\quad(\mbox{if}~q_s>2) 
.
	\end{aligned}\right.
	\end{equation}
Hereafter, we simply use $\partial_{x}^{l} \partial_{y}^{j} \phi(\mathbf{r}_0)$ to denote ${\partial^{r+j} \phi\over \partial x^r\partial y^j}\mid_{(x_0,y_0)}$
for a smooth function $\phi$.
Then we can recursively compute polynomials $a_s$ (with the degree $(q+1-s)$) by  (\ref{a0})-(\ref{aq}) (we first transform them into equations as in Subsection 2.2.1),
together with the above constrains, and so we construct a polynomial $a$ by $a = \sum_{l = 0}^{n_q}\frac{a_s}{(i\omega)^s}$. 	

For $s=0,1,\cdots,q-2$, the obtained polynomial $a_s$ is unique. However, when $n_q=q-1$, the obtained polynomial $a_{q-1}$ has two different choices (i.e., $a_{q-1}$ has two freedom degrees).
This means that the polynomial $a$ has one degree of freedom (rep. two degrees of freedom) when $n_q=q-2$ (rep. $n_q=q-1$).

\subsection{Discrete spaces and their approximate properties}
Based on the discussions in the previous two subsections, we can give definitions of the geometric optics ansatz plane wave (GOPW) basis functions.

Considering a generic element $K_0\in\mathcal{T}_h$. Let $p$ and $q$ be two given positive integers. Suppose that $\xi\in\mathcal{C}^{q+1}(K_0)$. For direction angles $\theta_l\in[0,2\pi]$ ($l=1,\cdots,p$),
we construct phase polynomials $\tau_l(x,y)$ ($l=1,\cdots,p$) with the degree $q+2$ as in Subsection \ref{directau}. It is clear that the phase polynomials $\tau_l(x,y)$ are independent of $\omega$. Moreover,
any order derivatives of the amplitude polynomials $a_l(x,y)$ determined in Subsection \ref{direca} are uniformly bounded with respect to $\omega$.

\begin{definition}\label{GOPWbasis} The GOPW basis functions on the element $K_0$ are defined as follows:
\begin{itemize}
\item {\bf Case 1.} For $q\geq 2$, we construct an amplitude polynomial $a^q_l$ associated with the phase polynomial $\tau^q_l(x,y)$ as in Subsection \ref{direca} with the first terminate condition $n_q=q-2$.
There are $p$ GOPW basis functions on $K_0$:
\begin{equation}\label{bas2}
		\varphi^q_l(x,y)= a^q_l(x,y)e^{i\omega\tau^q_l(x,y)}, \quad l=1,\cdots,p \quad ((x,y)\in K_0).
	\end{equation}
	\item {\bf Case 2.} For $q\geq 1$, there are $2p$ GOPW basis functions on $K_0$:
	\begin{equation} \label{bas1}
		\psi^q_{l,j}(x,y) = a_{l,j}^q(x,y) e^{i\omega\tau^q_l(x,y)}, \quad l=1,\cdots, p; ~j = 1, 2\quad ((x,y)\in K_0),
	\end{equation}
	where the amplitude polynomials $a_{l,j}^q$ ($j=1,2$) correspond to the phase polynomial $\tau_l(x,y)$ and are recursively constructed as in Subsection \ref{direca} with the second terminate condition $n_q=q-1$.	
\end{itemize}
\end{definition}

\begin{remark}
Considering the case $\xi$ is constant in each element of $\mathcal{T}_h$, we can still obtain the discrete phase $\tau(x,y)$ and amplitude $a(x,y)$ by the procedures in the last subsection for $q=1$. In this case, the equation $\mathcal{L}\varphi = 0$ can be exactly satisfied in every element. Taking the second kind of construction procedure illustrated above, two basis function $\{\psi_{l,j}, j = 1, 2\}$ for each direction $\mathbf{d}_l$ can be obtained and they have the following form:
$$
	\psi_{l,1} = e^{i\omega \mathbf{d}_l\cdot\mathbf{r}} \quad\mbox{and}\quad \psi_{l,2} = \mathbf{d}_l^{\perp}\cdot\mathbf{r}e^{i\omega \mathbf{d}_l\cdot\mathbf{r}},
$$
which are very different from the standard plane wave basis functions.
\end{remark}

With the GOPW basis functions described in Definition 2.1, we define two GOPW finite element spaces on $K_0$ as
$$ V^{(1)}_{p,q}(K_0)=span\{\varphi^q_l:~l=1,\cdots,p\};~~V^{(2)}_{p,q}(K_0)=span\{\psi^q_{l,j}:~l=1,\cdots,p;~j=1,2\}. $$
Then every functions in the above spaces approximately satisfy the homogeneous Helmholtz equation on the element $K_0$ in the sense that
\begin{equation}
 | \Delta v +\kappa^2(\mathbf{r})v|\leq C|v|h^{q}\quad\mbox{on}~~K_0\quad\forall v\in V^{(r)}_{p,q}(K_0)~~~(r=1,2), \label{2.defin}
 \end{equation}
where $C$ is a constant independent of $\omega$ and $h$ provided that $q\leq 2$ or $q\leq (\omega h)^{-{1\over 2}}$ for small $\omega h$, $|v|$ denotes the maximal modulus of the coordinates of $v$ under the GOPW basis functions.

The following two theorems give approximate properties of the GOPW finite element spaces $V^{(1)}_{p,q}(K_0)$ and $V^{(2)}_{p,q}(K_0)$, respectively.

\begin{theorem} \label{interp} For a given integer $n\geq 2$, set $p=2n+1$. Let the mesh size $h$ satisfy $h\omega\leq C_0$ and choose $q=\max\{2, [{(n-4)\ln (\omega h)^{-1}\over \ln\omega}]\}$.
For an element $K_0$, suppose that $\xi \in C^{q+1}(K_0)$ and $u\in C^{n+1}(K_0)$, which
satisfies the equation $\mathcal{L} u = 0$ on $K_0$ and has the stability
\begin{equation}\label{2.newregul}
 \|u\|_{C^k(K_0)}\leq C(\xi)\omega^k,\quad  0\leq k\leq n+1.
\end{equation}
Then there exists a function $u_p\in V^{(1)}_{p,q}(K_0)$
such that
\begin{equation}\label{intererr}
\|u-u_p\|_{\infty,K_0} \leq C(\xi, n) h^{n+1}\omega^{n+1}.
\end{equation}
When $u\in C^{k}(K_0)$ for $3\leq k\leq n+1$, we have more general estimates
\begin{equation}\label{2.new-intererr}
\|u-u_p\|_{C^l(K_0)} \leq C(\xi, n)h^{k-l}\omega^k,~~~l=0,1.
\end{equation}
Here $C(\xi, n)$ is a positive number independent of $h$ and $\omega$.
\end{theorem}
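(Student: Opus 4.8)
The plan is to reduce the approximation problem to a Taylor-polynomial reproduction problem at the barycenter $\mathbf r_0$, exploiting that the homogeneous constraint $\mathcal L u=0$ cuts the number of independent degree-$\le n$ Taylor coefficients of $u$ down to exactly $2n+1$, which matches the number $p=2n+1$ of GOPW basis functions. First I would invoke the stability hypothesis \eqref{2.newregul}: since $\|u\|_{C^k(K_0)}\le C(\xi)\omega^k$ for $0\le k\le n+1$, Taylor's theorem on an element of diameter $h$ shows that $u$ differs from its degree-$n$ Taylor polynomial $T_n u$ at $\mathbf r_0$ by at most $C(\xi)h^{n+1}\omega^{n+1}$ in the sup norm --- precisely the target bound. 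Hence it suffices to produce $u_p\in V^{(1)}_{p,q}(K_0)$ whose degree-$n$ Taylor data at $\mathbf r_0$ agrees with that of $u$ and whose own Taylor remainder is of the same order.

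To build $u_p$, I would analyze the local Taylor structure of the basis functions. Each $\varphi_l^q=a_l^q e^{i\omega\tau_l^q}$ satisfies $\nabla\tau_l^q(\mathbf r_0)=\sqrt{\xi(\mathbf r_0)}\,\mathbf d_l$ and $a_l^q(\mathbf r_0)=1$, so after factoring out the common carrier the angular content of the low-order Taylor part is governed by the classical plane-wave angles $\{\theta_l\}$. On the other hand, the admissible degree-$\le n$ Taylor data of a Helmholtz solution is, by a Vekua-type argument, the image of a harmonic polynomial of degree $\le n$, and that space has dimension $2n+1$. Choosing the directions $\theta_l$ equispaced, the linear system determining coefficients $\{\alpha_l\}$ with $\sum_{l}\alpha_l\varphi_l^q$ reproducing this data reduces at leading order to an invertible Vandermonde/DFT system --- exactly the mechanism by which $2n+1$ plane waves reproduce the circular harmonics of order $|k|\le n$ in the constant-wavenumber theory. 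The variable-coefficient corrections (higher phase terms in $\tau_l^q$, the amplitude corrections in $a_l^q$, and the $\mathcal O(h^q)$ Helmholtz defect \eqref{2.defin}) enter as controlled perturbations of this system.

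With $\{\alpha_l\}$ so determined, I would assemble the error estimate by splitting $u-u_p$ into three contributions: the Taylor remainder of $u$ (already bounded by $C(\xi)h^{n+1}\omega^{n+1}$), the Taylor remainder of $u_p$ (controlled via the $\omega$-uniform bounds on the derivatives of $a_l^q$, the $\omega^{n+1}$ growth coming from $e^{i\omega\tau_l^q}$, and bounds on $\{\alpha_l\}$ furnished by the conditioning of the reproduction matrix), and the residual arising because each $\varphi_l^q$ solves the Helmholtz equation only up to $\mathcal O(h^q)$. The precise choice $q=\max\{2,[(n-4)\ln(\omega h)^{-1}/\ln\omega]\}$ is dictated by forcing this last, defect-induced residual below the target rate $h^{n+1}\omega^{n+1}$ uniformly for $\omega h\le C_0$. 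The sharper $C^l$ estimates \eqref{2.new-intererr} for $3\le k\le n+1$ then follow by running the same argument with truncation order $k$ in place of $n+1$ and differentiating the remainder bounds once.

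The main obstacle will be the combination of the reproduction step and the defect bookkeeping. In the constant-wavenumber case the Jacobi--Anger expansion yields an exact, perfectly conditioned DFT relation between plane waves and Bessel modes; here the GOPW functions are only approximate solutions whose phases and amplitudes are themselves truncated, so one must show that the perturbed reproduction matrix remains invertible with an inverse growing no faster than the construction tolerates, and that the $\mathcal O(h^q)$ defect --- possibly amplified by powers of $\omega$ and by the factor $q^2$ flagged in the construction of $a$ --- stays below $(\omega h)^{n+1}$. Verifying this balance is precisely what forces the stated formula for $q$.
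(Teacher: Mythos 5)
Your overall architecture coincides with the paper's: Taylor matching at the barycenter, the dimension count $2n+1$ coming from the Helmholtz constraints, equispaced angles giving a Vandermonde/DFT-type invertibility, a three-way error split, and a choice of $q$ that suppresses the defect. However, there is a genuine gap at your central step. You claim it ``suffices to produce $u_p$ whose degree-$n$ Taylor data at $\mathbf{r}_0$ agrees with that of $u$,'' and you propose to get the coefficients by solving a reproduction system that is an invertible perturbation of the DFT system. For $n\geq 5$ this is impossible, and the impossibility is exactly the paper's ``essential difficulty'': the reproduction system (\ref{equivlinear}) has $m_n=(n+1)(n+2)/2$ equations in only $p=2n+1$ unknowns, so solvability is a question of \emph{consistency}, not of invertibility or conditioning. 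The right-hand side $b^{(n)}$ lies in the constraint space $\mathbb{S}_n$ of (\ref{Rspace}) because $\mathcal{L}u=0$, whereas the columns of ${\mathcal C}_{(n)}$ do not, because the GOPW functions satisfy $\partial_x^{k_1}\partial_y^{k_2}\mathcal{L}\varphi_l(\mathbf{r}_0)\neq 0$ for $2<k_1+k_2\leq n-2$ (see (\ref{diffH-new})); hence $b^{(n)}\notin Im({\mathcal C}_{(n)})$ and the system is inconsistent. No bound on the inverse of a ``perturbed reproduction matrix'' can repair an inconsistent overdetermined system, so your plan stalls precisely where the real work begins. (For $n\leq 4$ the violated constraints are vacuous, exact matching does hold, and the paper's extra error term is identically zero --- the difficulty only activates for $n\geq 5$, which is also when the second argument of $\max$ in the definition of $q$ matters.)

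The missing idea is the paper's decomposition ${\mathcal C}_{(n)}=\hat{{\mathcal C}}_{(n)}+\tilde{{\mathcal C}}_{(n)}$ (Lemma \ref{Mdecom}), where $\tilde{{\mathcal C}}_{(n)}$ is defined by the recursion (\ref{tildeM}) so that $Im(\hat{{\mathcal C}}_{(n)})\subset\mathbb{S}_n$ and, by (\ref{diffH}), the entries of $\tilde{{\mathcal C}}_{(n)}$ are of size $\mathcal{O}(q^2\omega^{k_1+k_2-q-2})$. One then solves the \emph{modified}, solvable system $\hat{{\mathcal C}}_{(n)}Z^{(n)}=b^{(n)}$; its unique solvability and the coefficient bound $\|Z^{(n)}\|_{\infty}\leq C(\xi,n)$ come from the factorization $\hat{{\mathcal C}}_{(n)}=\mathrm{T}_n\Lambda_{(n)}\hat{{\mathcal C}}^e_{(n)}$ with $\mathrm{T}_n\to I$ as $\omega\to+\infty$ (Lemmas \ref{matrx-decom}--\ref{pw-go}), which is where your DFT/Vandermonde heuristic actually enters --- applied to $\hat{{\mathcal C}}_{(n)}$, not to ${\mathcal C}_{(n)}$. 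The discarded part then produces an explicit extra error term $|T_n-\sum_l z_lP_{l,n}|\leq C(\xi,n)\,q^2\sum_{m=5}^{n}\omega^{m-q-2}h^{m}$, and it is this term (not a residual in the PDE) that the stated choice of $q$ drives below $h^{n+1}\omega^{n+1}$. Your defect-residual bookkeeping and closing remarks show you sensed this tension, but without the decomposition you have no well-defined $u_p$ to estimate in the first place.
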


\begin{theorem} \label{interp2} For a given integer $n\geq 2$, set $p=2n+1$. Let the mesh size $h$ satisfy $h\omega\leq C_0$ and
choose $q=\max\{1, [{(2n-4)\ln (\omega h)^{-1}\over \ln\omega}]\}$. For an element $K_0$, suppose that $\xi \in C^{q+1}(K_0)$ and $u\in C^{2n+1}(K_0)$, which
satisfies the equation $\mathcal{L} u = 0$ on $K_0$ and has the stability
\begin{equation}\label{2.newregul1}
 \|u\|_{C^k(K_0)}\leq C(\xi)\omega^k,\quad  0\leq k\leq 2n.
\end{equation}
Then there is a function $u_p\in V^{(2)}_{p,q}(K_0)$ such that
\begin{equation}\label{intererr1}
\|u-u_p\|_{\infty,K_0} \leq C(\xi, n) h^{2n+1}\omega^{2n+1}
\end{equation}
and (if $u\in C^{k}(K_0)$ for $3\leq k\leq 2n+1$)
\begin{equation}\label{2.new-intererr1}
\|u-u_p\|_{C^l(K_0)} \leq C(\xi, n)h^{k-l}\omega^k,~~~l=0,1.
\end{equation}
Here $C(\xi, n)$ is a positive number independent of $h$ and $\omega$.

The proofs of the above results are technical and will be stated in the next section.
\end{theorem}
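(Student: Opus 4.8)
\emph{The plan} is to mirror the argument for Theorem \ref{interp}, reducing the claim to a matching of Taylor data at the barycenter ${\bf r}_0$ and the inversion of a small linear system, now using the \emph{pair} of amplitudes attached to each direction to reproduce the Taylor polynomial of $u$ up to degree $2n$ (rather than degree $n$). Throughout I would rescale $K_0$ to a reference element of unit diameter via $\hat{\bf r}=({\bf r}-{\bf r}_0)/h$, so that the effective frequency is $\kappa_0:=\omega h\le C_0$ and all constants become independent of $h$ and $\omega$; under this scaling \eqref{2.newregul1} reads $\|\hat u\|_{C^k(\hat K)}\le C(\xi)\kappa_0^k$, so the top-order derivatives of $\hat u$ have size $\kappa_0^{2n+1}$. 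The stability \eqref{2.newregul1} bounds derivatives only up to order $2n$, whereas the degree-$2n$ Taylor remainder needs the order-$(2n+1)$ derivatives; I would recover these from $\Delta u=-\omega^2\xi u$, since any $\partial_x^a\partial_y^b u$ with $a+b=2n+1$, $a\ge 2$ equals $-\omega^2\partial_x^{a-2}\partial_y^{b}(\xi u)-\partial_x^{a-2}\partial_y^{b+2}u$, and repeated use lowers the $x$-order until every order-$(2n+1)$ derivative is written through $\omega^2\partial^\beta(\xi u)$ with $|\beta|\le 2n-1$. With $\xi\in C^{q+1}(K_0)$ and \eqref{2.newregul1}, Leibniz' rule gives $\|u\|_{C^{2n+1}(K_0)}\le C(\xi,n)\omega^{2n+1}$, so by Taylor's theorem any polynomial agreeing with $u$ to order $2n$ at ${\bf r}_0$ approximates $u$ with error at most $C(\xi,n)h^{2n+1}\omega^{2n+1}$, exactly the bound \eqref{intererr1}. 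It therefore suffices to exhibit $u_p\in V^{(2)}_{p,q}(K_0)$ whose degree-$2n$ Taylor polynomial at ${\bf r}_0$ coincides with that of $u$, with coefficients of controlled size.

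\emph{The matching system.} Since $\mathcal{L}u=0$, the homogeneous parts $P_j$ of the Taylor polynomial of $u$ obey $\Delta P_{j}=-\omega^2\xi({\bf r}_0)P_{j-2}+(\text{lower-order }\xi\text{-corrections})$, so the degree-$2n$ Taylor polynomial is pinned down by its harmonic data at each level and ranges over a space of dimension $4n+1$. On the reference element each basis function is $\hat\psi^q_{l,j}=\hat a^q_{l,j}\,e^{i\kappa_0\hat\tau^q_l}$ with, to leading order, $\hat\tau^q_l=\sqrt{\xi({\bf r}_0)}\,{\bf d}_l\cdot\hat{\bf r}+O(h)$ and the two amplitudes behaving as $\hat a^q_{l,1}=1+O(h)$ and $\hat a^q_{l,2}={\bf d}_l^{\perp}\cdot\hat{\bf r}+O(h)$, in accordance with the explicit constant-coefficient forms of the Remark. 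I would impose the $4n+1$ linear conditions that $\sum_{l,j}c_{l,j}\hat\psi^q_{l,j}$ reproduce the constrained Taylor data of $\hat u$, a linear system in the $4n+2$ coefficients $c_{l,j}$. Expanding $e^{i\kappa_0\sqrt{\xi({\bf r}_0)}\,{\bf d}_l\cdot\hat{\bf r}}$ by the Jacobi--Anger formula turns the $\hat\psi_{l,1}$ columns into a Vandermonde system in $e^{i\theta_l}$ over the $p=2n+1$ equispaced directions, reaching Fourier modes $|m|\le n$, while the factors ${\bf d}_l^{\perp}\cdot\hat{\bf r}$ carried by $\hat\psi_{l,2}$ supply the shifted modes that extend the reachable range to $|m|\le 2n$; this doubling is precisely what raises the order from $n+1$ to $2n+1$.

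\emph{Stability of the matching --- the main obstacle.} The crux is to show that this system is solvable \emph{and} that the coefficients $\{c_{l,j}\}$ are of the right size, so that the degree-$(2n+1)$ and higher Taylor terms of $u_p$, together with the defect by which the $\hat\psi^q_{l,j}$ fail to solve \eqref{homohelm} exactly, do not exceed the target $\kappa_0^{2n+1}$. The delicate point is the bookkeeping of the powers of $\kappa_0$: the leading matching matrix is the classical equispaced-direction Vandermonde matrix, whose inverse is bounded independently of $\kappa_0$, but the full matrix is only an $O(\kappa_0)$ perturbation of it with variable-phase and variable-amplitude corrections, and one must verify that passing to the inverse keeps the interpolant stable in the scale dictated by \eqref{2.newregul1}. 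Granting this, $\max_{l,j}|c_{l,j}|$ is controlled and the residual splits into the Taylor remainder of $u$ (of size $\kappa_0^{2n+1}$) plus the approximate-Helmholtz defect, governed by \eqref{2.defin}, namely $|\mathcal{L}\hat\psi^q_{l,j}|\le C|\hat\psi^q_{l,j}|h^q$. The choice $q=\max\{1,[(2n-4)\ln(\omega h)^{-1}/\ln\omega]\}$ is calibrated exactly so that, after multiplication by the coefficient bound, this defect is dominated by $h^{2n+1}\omega^{2n+1}$; this is the same balancing of $q$ against the order that appears, with $n$ in place of $2n$, in Theorem \ref{interp}.

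\emph{Conclusion.} Combining the previous steps produces $u_p=\sum_{l,j}c_{l,j}\psi^q_{l,j}\in V^{(2)}_{p,q}(K_0)$ satisfying \eqref{intererr1}. The general estimate \eqref{2.new-intererr1} follows from the identical construction applied with only the Taylor data of degree $k-1$ matched (for $3\le k\le 2n+1$), which yields the $C^0$ bound $C(\xi,n)h^{k}\omega^{k}$, and then by differentiating the remainder once to pass from $l=0$ to $l=1$ at the cost of a single power of $h$; the $C^1$ case uses in addition that the derivatives of the amplitude polynomials are uniformly bounded in $\omega$ after the normalization noted before Definition \ref{GOPWbasis}. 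The whole argument is the direct analogue, with the doubled amplitude family, of the proof of Theorem \ref{interp}.
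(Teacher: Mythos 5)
Your plan is structurally the same as the paper's: the paper proves Theorem \ref{interp} in detail and, per Remark 3.2, obtains Theorem \ref{interp2} by the identical scheme with the $2n$-order Taylor polynomial, the doubled basis $\psi_{l,j}$, a $2p$-dimensional coefficient vector, and $m_{2n}\times 2p$ matrices. Several of your observations are sound and even add value: the dimension count $\dim\mathbb{S}_{2n}=4n+1$ against $4n+2$ unknowns, the Jacobi--Anger ``mode-doubling'' explanation of why the second amplitude raises the order from $n+1$ to $2n+1$ (the paper never motivates this), and your repair of the mismatch between the hypothesis (\ref{2.newregul1}) (derivatives only up to order $2n$) and the degree-$2n$ Taylor remainder, obtained by differentiating $\Delta u=-\omega^2\xi u$ (note, though, that this repair consumes derivatives of $\xi$ up to order $2n-1$, which the stated hypothesis $\xi\in C^{q+1}(K_0)$ with possibly $q=1$ does not provide).

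However, there is a genuine gap at exactly the point where all of the paper's technical work lives, and you explicitly defer it (``Granting this\dots''): the solvability of the matching system together with the $\omega$-uniform bound on the coefficients (the paper's estimate (\ref{3.13new}), $\|Z^{(n)}\|_\infty\le C(\xi,n)$). In the paper this requires (i) Lemma \ref{Mdecom}: the naive system (\ref{equivlinear}) has \emph{no} solution, since $b^{(n)}\in\mathbb{S}_n$ while $\operatorname{Col}({\mathcal C}_{(n)})\nsubseteq\mathbb{S}_n$, so one must split ${\mathcal C}_{(n)}=\hat{\mathcal C}_{(n)}+\tilde{\mathcal C}_{(n)}$ and solve (\ref{equivlinearnew}); (ii) the quantitative defect propagation (\ref{diffH}), $(\tilde{{\mathcal C}}_{(n)})_{k_1,k_2}^l=\mathcal{O}(q^2\omega^{k_1+k_2-q-2})$, which is what makes the calibration of $q$ work; and (iii) Lemmas \ref{matrx-decom}--\ref{pw-go}: the factorization $\hat{\mathcal C}_{(n)}=\mathrm{T}_n\Lambda_{(n)}\hat{\mathcal C}^e_{(n)}$, the rank of $\hat{\mathcal C}^e_{(n)}$, and $\|\mathrm{T}_n-I\|_\infty\to0$ as $\omega\to+\infty$. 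Your one concrete suggestion for this step does not close it: you claim the full matching matrix is ``an $O(\kappa_0)$ perturbation'' of the equispaced Vandermonde matrix, but $\kappa_0=\omega h$ is only assumed \emph{bounded} ($\le C_0$), not small, so an $O(\kappa_0)$ perturbation of an invertible matrix can be singular and no Neumann-series argument applies; the paper's actual smallness parameter is $\omega^{-1}$ for large $\omega$, extracted only after conjugating by the graded diagonal scaling $\Lambda_{(n)}$ and exploiting the block structure $\mathrm{Q}_n^j$. Finally, for the second family the rank statement itself (the analogue of Lemma \ref{rankmnc} for the doubled column set, where moreover the kernel is nontrivial since $2p=4n+2>\dim\mathbb{S}_{2n}$) is asserted via your Jacobi--Anger heuristic but not proved. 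As it stands, the proposal is a correct outline whose hardest step is assumed rather than established.
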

\begin{remark} In the above two theorems, the factors $C(\xi, n)$ are independent of $\omega$, provided that $\omega h$ is upper bounded (which is a basic assumption in numerical
methods for Helmholtz equations). This means that the proposed GOPW methods have better approximate properties than the GPW method (compare Theorem 1 in \cite{Imbert2015}), which will be
confirmed by numerical experiments.
\end{remark}

\begin{remark} In Theorem 2.1 and Theorem 2.2 the parameter $q>2$ (rep. $q>1$) only when $n\geq 5$ and $h<(\omega^{-1})^{{n-1\over n-4}}$ (resp. $n\geq 3$ and $h<(\omega^{-1})^{{2n-2\over 2n-4}}$). The choices of $q$ means that every basis function should approximately satisfy the considered homogeneous Helmholtz equation
with slightly higher accuracy when $h\ll\omega^{-1}$ and $n\geq 5$ (resp. $n\geq 3$) such that the approximation $u_p$ possesses the desired $h$-convergence order. Of course, for the case of constant wave number such a condition can be ignored, since every basis function exactly satisfies the homogeneous Helmholtz equation. It is easy to verify that the chosen parameters $q$ satisfy $q\leq (\omega h)^{-{1\over 2}}$
when $\omega h\rightarrow 0$ and $n\geq 5$ (resp. $n\geq 3$).
We emphasize that the dimensions of the discrete spaces $V^{(1)}_{p,q}(K_0)$ and $V^{(2)}_{p,q}(K_0)$ do not depend on
the values of $q$.
\end{remark}

\section{The verification of the approximate properties}\label{interperrest}
In this section we are devoted to the derivations of Theorem \ref{interp} and Theorem \ref{interp2}.
For simplicity of exposition, we only give the details of the proof of Theorem \ref{interp} (Theorem \ref{interp2} can be proved in almost the same way). We follow the basic
ideas of the analysis for the generalized plane waves introduced in \cite{Imbert2015}, but we have to establish some new techniques in this section.

\subsection{Construction of the desired approximations $u_p$}

The main ideas are to establish suitable algebraic relations between the Taylor expansions of the GOPW
basis functions, classical plane wave basis functions as well as the analytic solution of equation (\ref{homohelm}).
\subsubsection{The basic ideas}
Let $p=2n+1$ with an integer $n\geq 2$, and choose $p$ direction angles $\theta_{l}=2\pi(l-1)/p$ ($l=1,\cdots,p$). For $q\geq 2$, suppose $\xi\in \mathcal{C}^{q+1}(K_0)$.
Consider the plane wave basis function ($\mathbf{r}_0=(x_0,y_0)$)
\begin{equation*}
	e_{l}(x, y)=e^{i \omega \xi(\mathbf{r}_0)((x-x_{0}) \cos \theta_{l}+(y-y_{0}) \sin \theta_{l})}
\end{equation*}
and the corresponding GOPW basis function (for ease of notation, we omit the upper index $q$)
\begin{equation*}
\varphi_{l}(x, y)=a_le^{i\omega\tau_l(x, y)}=(\sum_{s = 0}^{q-2}(i\omega)^{-s} a_{l,s})e^{i\omega\tau_l(x, y)},
\end{equation*}
where $\tau_l(x, y)$ are constructed as in Subsection 2.2.1, and $a_{l,s}$ are constructed as (\ref{a0})-(\ref{aq}) with replacing $\tau(x,y)$ by $\tau_l(x,y)$ ($n_q=q-2$).
For convenience, we write $\varphi_{l}(x, y)$ as
$$ \varphi_{l}(x, y)=\sum_{s = 0}^{q-2}(i\omega)^{-s} \varphi_{l,s}(x, y) , \quad \varphi_{l,s}(x, y) = a_{l,s}(x, y)e^{i\omega\tau_l(x, y)}. $$

We want to find a function $u_p\in V^{(1)}_{p,q}(K_0)$ such that $u_p$ can sufficiently approximate the analytic solution $u$ of the equation (\ref{homohelm}). Define
$$  u_p(x,y)=\sum\limits_{l=1}^p z_l\varphi_l(x,y). $$
It suffices to determine the coefficients $\{z_l\}_{l=1}^p$. To this end, we use $T_n(x,y)$ and $P_{l,n}(x,y)$ to denote the $n$-order Taylor polynomials of $u$ and $\varphi_l$
at the point $(x_0,y_0)\in K_0$, respectively, i.e.,
$$ T_n(x,y)=\sum_{k=0}^{n} \sum_{r+j=k}{\partial_x^r\partial_y^ju({\bf r}_0)\over r!j!}(x-x_0)^r(y-y_0)^j, $$
$$ P_{l,n}(x,y)=\sum_{k=0}^{n} \sum_{r+j=k}{\partial_x^r\partial_y^j\varphi_l({\bf r}_0)\over r!j!}(x-x_0)^r(y-y_0)^j. $$
Then, for $(x,y)\in K_0$ we have
\begin{equation}\label{utaylor}
	\left\{\begin{aligned}
	& |u(x, y)-T_n(x,y)| \leq {2^{n+1}\over (n+1)!}|\mathbf{r}-\mathbf{r}_0|^{n+1}\|u\|_{C^{n+1}(K_0)}, \\
	& |\varphi_{l}(x, y)-P_{l,n}(x,y)| \leq {2^{n+1}\over (n+1)!}|\mathbf{r}-\mathbf{r}_0|^{n+1}\|\varphi_{l}\|_{C^{n+1}(K_0)}.	\end{aligned}\right.
\end{equation}
A natural idea is to find $\{z_l\}_{l=1}^p$ such that
\begin{equation}
\sum\limits_{l=1}^p z_lP_{l,n}(x,y)=T_n(x,y),\quad (x,y)\in K_0. \label{3.1new}
\end{equation}
Unfortunately, we will realize that the above equation has no solution.
\subsubsection{The difficulties} At first we give an ordering rule of entries in a column vector composed of numbers with double indices.
Let $\mathbb{N}$ denotes the set of non-negative integers, set $m_j = (j+1)(j+2)/2$ for $j \in \mathbb{N}$. For a set $\{\chi_{r,j}:~0\leq r+j\leq n\}$
of numbers with double indices $(r,j)\in \mathbb{N}^2$, we use the following rule to define a column vector
$$ \alpha_n=(\chi_{r,j}:~0\leq r+j\leq n)=(\chi_{0,0}~~\chi_{1,0}~~\chi_{0,1}\cdots\chi_{n,0}~~\chi_{n-1,1}\cdots \chi_{0,n})^t. $$
It is clear that the vector $\alpha_n$ has the dimension $m_n$. In order to describe the above rule in a single index instead of double indices, we establish an one-to-one mapping from a
double indices $(r,j)$ ($r,j\in \mathbb{N}$) to a single index $k\in \mathbb{N}^+$. If $r=k=0$, then $F(r,j)=1$; otherwise, define
$$ k=F(r,j)=m_{r+j-1}+j+1,\quad r,j\in \mathbb{N};~r+j\geq 1. $$
It is easy to see that
$$ m_{r+j-1}+1\leq F(r,j)\leq m_{r+j-1}+(r+j)+1=m_{r+j}. $$
Then $1\leq F(r,j)\leq m_n$ when $0\leq r+j\leq n$. We can check that the inverse mapping
$F^{-1}$ can be described as $(0,0)=F^{-1}(1)$ and, for $2\leq k\leq m_n$,
$$ F^{-1}(k)=(r,j)=(m_l-k, k-(m_{l-1}+1)),\quad m_{l-1}+1\leq k\leq m_l; l=1,\cdots,n. $$
In the previous definition of the column vector $\alpha_n=(\chi_{r,j}:~0\leq r+j\leq n)$, the number $\chi_{r,j}$ is arranged as the $k$-row of $\alpha_n$ with $k=F(r,j)$.

Then we define matrices associated with coefficients of Taylor polynomials. For convenience, set $c^{(l)}_{r,j}={\partial_x^r\partial_y^j\varphi_l({\bf r}_0)\over r!j!}$.
Similarly, we use $c^{(e_l)}_{r,j}$ and $c^{(l,s)}_{r,j}$ to denote the coefficients in the Taylor formulas of $e_l(x,y)$ and $\varphi_{l,s}$ respectively
(i.e., $\varphi_l(x,y)$ in the above formula is replaced by $e_l(x,y)$ and $\varphi_{l,s}$).

\begin{definition}\label{matrix} Define $m_n\times p$ matrices ${\mathcal C}_{(n)}$ such that, for $l=1,\cdots,p$, the $l$-column of ${\mathcal C}_{(n)}$ consists of the numbers $c^{(l)}_{r,j}$ as the
rule described above, i.e., $c^{(l)}_{r,j}$ is arranged as the $k$-row of ${\mathcal C}_{(n)}$ with $k=F(r,j)$. Similarly, define $m_n\times p$ matrices ${\mathcal C}^{e}_{(n)}$ and ${\mathcal C}^s_{(n)}$
composed of the numbers $c^{(e_l)}_{r,j}$ and $c^{(l,s)}_{r,j}$, respectively.
\end{definition}

It is clear that
\begin{equation} \label{first-decomp}
	{\mathcal C}_{(n)} = \sum_{s = 0}^{q-2}(i\omega)^{-s} {\mathcal C}^s_{(n)}.
\end{equation}

Let $b^{(n)}$ denote the vector composed of the coefficients ${\partial_x^r\partial_y^ju({\bf r}_0)\over r!j!}$ of $T_n(x,y)$, which are arranged
as the same order described above. Then the equation (\ref{3.1new}) is equivalent to the following algebraic system
\begin{equation}\label{equivlinear}
{\mathcal C}_{(n)}Z^{(n)}=b^{(n)},
\end{equation}
where $Z^{(n)}=(z_1,\cdots, z_p)^t$ ($p=2n+1$). We will explain that this system has no solution.

Let $\mathcal{L}$ be the Helmholtz operator and $\varphi\in C^{n+1}(K_0)$. For ${\bf r}_0\in K_0$, a smooth function $\varphi$ is called {\it $\mathcal{L}$-vanishing} at ${\bf r}_0$ if
$$ (\partial^r_x\partial^j_y\mathcal{L}\varphi)({\bf r}_0)=0,\quad\mbox{for~all}~(r,j)~\mbox{satisfying}~0\leq r+j\leq n-2. $$
It is easy to see that such a function satisfy ${n(n-1)\over 2}$ constraints.

For convenience, the $m_n$ dimensional column vector $\alpha_n=(\chi_{r,j}:~0\leq r+j\leq n)$ consisting of the Taylor coefficient $\chi_{r,j}$ of $\varphi$ at ${\bf r}_0$ (i.e, $\chi_{r,j}=(\partial^r_x\partial^j_y\varphi)(x_0,y_0)/r!j!$) is called {\it $\varphi$-derived} vector at ${\bf r}_0$.

As in \cite{Imbert2015}, we define a particular vector space. Let $\mathbb{C}^{k}$ denote the space of complex column vector with the dimension $k$. For ${\bf r}_0\in K_0$, define a vector space $\mathbb{S}_n\subset \mathbb{C}^{m_{n}}$ by
\begin{equation}\label{Rspace}
	\begin{aligned}
	&\mathbb{S}_n:=\{\alpha=(\chi_{l,j}:~0\leq l+j\leq n)\in \mathbb{C}^{m_{n}};~\mbox{when}~k_1+k_2\leq n-2,~\mbox{we~have}\\
	&\quad 2m_{k_1}\chi_{k_{1}+2, k_{2}} + 2m_{k_2}\chi_{k_{1}, k_{2}+2} + \omega^2 \sum_{l=0}^{k_{1}} \sum_{j=0}^{k_{2}} \frac{(\partial_{x}^{l} \partial_{y}^{j} \xi)(\mathbf{r}_0)}{l ! j !}\chi_{k_{1}-l, k_{2}-j} = 0\} .
	\end{aligned}
\end{equation}
Noting that there are ${n(n-1)\over 2}$ constraints in the space $\mathbb{S}_n$, the dimensions of the space $\mathbb{S}_n$ are just $p=2n+1$ since
$$ \operatorname{dim} \mathbb{S}_n=m_n-{n(n-1)\over 2}={(n+1)(n+2)\over 2}-{n(n-1)\over 2}=2n+1. $$
\begin{lemma}\label{3.newlemma} A function $\varphi$ is {\it $\mathcal{L}$-vanishing} at ${\bf r}_0$ if and only if the {\it $\varphi$-derived} vector belongs to the space $\mathbb{S}_n$.
\end{lemma}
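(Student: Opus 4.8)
The statement claims an equivalence between two conditions on a smooth function $\varphi$: being $\mathcal{L}$-vanishing at $\mathbf{r}_0$ (i.e.\ $(\partial_x^r\partial_y^j\mathcal{L}\varphi)(\mathbf{r}_0)=0$ for all $r+j\le n-2$), and having its $\varphi$-derived Taylor-coefficient vector lie in $\mathbb{S}_n$. Since both conditions are finite lists of $\frac{n(n-1)}{2}$ linear constraints indexed by pairs $(k_1,k_2)$ with $k_1+k_2\le n-2$, it suffices to show that the $(k_1,k_2)$-th constraint in each list is literally the same scalar equation. So the whole proof reduces to a single computation: expressing $(\partial_x^{k_1}\partial_y^{k_2}\mathcal{L}\varphi)(\mathbf{r}_0)$ in terms of the Taylor coefficients $\chi_{r,j}=\frac{(\partial_x^r\partial_y^j\varphi)(\mathbf{r}_0)}{r!\,j!}$.

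First I would write $\mathcal{L}\varphi = -\Delta\varphi - \omega^2\xi\varphi$ (recalling $\kappa^2=\omega^2\xi$) and apply $\partial_x^{k_1}\partial_y^{k_2}$, evaluating at $\mathbf{r}_0$. For the Laplacian term, the identity $\partial_x^{k_1}\partial_y^{k_2}\partial_x^2\varphi(\mathbf{r}_0) = (k_1+2)!\,k_2!\,\chi_{k_1+2,k_2}$ converts the derivative into the coefficient $\chi_{k_1+2,k_2}$; dividing through by $k_1!\,k_2!$ (the natural normalization, since we compare coefficient equations, not derivative equations) produces the factor $\frac{(k_1+2)!}{k_1!}=(k_1+2)(k_1+1)=2m_{k_1}$ in front of $\chi_{k_1+2,k_2}$, and symmetrically $2m_{k_2}$ in front of $\chi_{k_1,k_2+2}$. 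For the potential term $\omega^2\xi\varphi$, I would apply the Leibniz rule to $\partial_x^{k_1}\partial_y^{k_2}(\xi\varphi)(\mathbf{r}_0)$, again normalizing by $k_1!\,k_2!$; this is exactly the discrete convolution $\sum_{l=0}^{k_1}\sum_{j=0}^{k_2}\frac{(\partial_x^l\partial_y^j\xi)(\mathbf{r}_0)}{l!\,j!}\chi_{k_1-l,k_2-j}$ appearing in \eqref{Rspace}. Matching these three pieces (up to the overall sign from $\mathcal{L}$, which is irrelevant for a homogeneous equation) shows the $(k_1,k_2)$-th $\mathcal{L}$-vanishing condition coincides with the $(k_1,k_2)$-th defining equation of $\mathbb{S}_n$.

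The only genuinely delicate point, which I would verify carefully, is the bookkeeping of factorials in the Leibniz expansion of $\partial_x^{k_1}\partial_y^{k_2}(\xi\varphi)$: the binomial coefficients $\binom{k_1}{l}\binom{k_2}{j}$ combine with the $\frac{1}{k_1!\,k_2!}$ normalization and the factorials hidden inside $\chi$ to leave precisely the clean convolution weights $\frac{1}{l!\,j!}$ on $\xi$ and unit weights on $\chi_{k_1-l,k_2-j}$. I expect this factorial reconciliation to be the main (and essentially only) obstacle; once it is confirmed, the equivalence follows termwise. Finally I would note that the range $r+j\le n-2$ in the $\mathcal{L}$-vanishing definition is exactly the range $k_1+k_2\le n-2$ in \eqref{Rspace}, so the two finite constraint sets are in bijection and the lemma follows immediately from the termwise identity, giving both directions of the ``if and only if'' at once.
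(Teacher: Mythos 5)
Your proposal is correct and follows essentially the same route as the paper's own proof: apply $\partial_x^{k_1}\partial_y^{k_2}$ to $\mathcal{L}\varphi$, use the Leibniz rule on the product $\xi\varphi$, and divide by $k_1!\,k_2!$ so that the resulting identity reads $2m_{k_1}\chi_{k_1+2,k_2}+2m_{k_2}\chi_{k_1,k_2+2}+\omega^2\sum_{l,j}\frac{(\partial_x^l\partial_y^j\xi)(\mathbf{r}_0)}{l!\,j!}\chi_{k_1-l,k_2-j}$, matching the defining constraints of $\mathbb{S}_n$ term by term over the same index range $k_1+k_2\le n-2$. The factorial reconciliation you single out does work exactly as you predict (the binomial coefficients combine with the normalization and the factorials inside $\chi$ to leave weights $\frac{1}{l!\,j!}$ on $\xi$ and unit weights on $\chi$), and the sign from $\mathcal{L}=-(\Delta+\kappa^2)$ is indeed immaterial for the homogeneous constraints.
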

\begin{proof} Let double indices $(k_1,k_2)$ satisfy $0\leq k_1+k_2\leq n-2$. By the definition of the operator $\mathcal{L}$ and the high order derivative formula for the product of two functions, we have
\begin{eqnarray*}
\partial_x^{k_1}\partial_y^{k_2} \mathcal{L}\varphi& =&\partial_x^{k_1+2}\partial_y^{k_2} \varphi + \partial_x^{k_1}\partial_y^{k_2+2} \varphi \cr
	&+& k_1!k_2!\omega^2 \sum_{l=0}^{k_{1}} \sum_{j=0}^{k_{2}} \frac{\partial_{x}^{l} \partial_{y}^{j} \xi}{l ! j !} \frac{\partial_x^{k_1-l}\partial_y^{k_2-j} \varphi}{(k_1-l)!(k_2-j)!}.
\end{eqnarray*}
Then,
\begin{eqnarray}
{1\over k_1!k_2!}(\partial_x^{k_1}\partial_y^{k_2} \mathcal{L}\varphi)(\mathbf{r}_0)& =& (k_1+2)(k_1+1)\chi_{k_{1}+2, k_{2}}+ (k_2+2)(k_2+1)\chi_{k_{1}, k_{2}+2} \cr
	&+& \omega^2 \sum_{l=0}^{k_{1}} \sum_{j=0}^{k_{2}} \frac{(\partial_{x}^{l} \partial_{y}^{j} \xi)(\mathbf{r}_0)}{l ! j !} \chi_{k_{1}-l, k_{2}-j}.
\label{3.new-vanishing}
\end{eqnarray}
This equality implies the desired result.
\end{proof}

Since $\mathcal{L} u=0$, we have $b^{(n)} \in \mathbb{S}_n$ by Lemma \ref{3.newlemma} (see also \cite{Imbert2015}).

Now we investigate the solvability of (\ref{equivlinear}). By the construction of the GOPW basis function $\varphi_l$, we deduce that
\begin{eqnarray}
	\mathcal{L}\varphi_l & =-\{\sum\limits_{s = 0}^{q-2}(i\omega)^{1-s}(2\nabla a_{l,s} \cdot \nabla\tau_l + a_{l,s} \Delta\tau_l + \Delta a_{l,s-1})\}e^{i\omega\tau_l} - (i\omega)^2(|\nabla\tau_l|^2 - \xi)\varphi_l \cr
	& =-q^2\sum\limits_{s = 0}^{q-2}\{(i\omega)^{1-s} \varepsilon_{q+1-s}(x-x_0, y-y_0)\}e^{i\omega\tau_l} - (i\omega)^2\varepsilon_{q+2}(x-x_0, y-y_0)\varphi_l\cr
   &=-q^2\sum\limits_{r = 3}^{q+1}\{(i\omega)^{r-q} \varepsilon_r(x-x_0, y-y_0)\}e^{i\omega\tau_l} - (i\omega)^2\varepsilon_{q+2}(x-x_0, y-y_0)\varphi_l, \label{3.new-equal}
\end{eqnarray}
where $\varepsilon_j(x-x_0, y-y_0)$ is a polynomial of $x-x_0$ and $y-y_0$, each term of which contains a factor like $(x-x_0)^s(y-y_0)^t$ with $s+t=j$.
It is easy to see that
$$(\partial_x^j\partial_y^k\varepsilon_r(x-x_0,y-y_0))(r_0)=j!k!\not=0$$
if and only if $j+k=r$.
Thus, by (\ref{3.new-equal}) we deduce that (noting $\omega h=O(1)$)
\begin{equation} \label{diffH-new}
	\partial_x^{k_1}\partial_y^{k_2}\mathcal{L}\varphi_l(\mathbf{r}_0) =
	\begin{cases} {0, \quad  k_1+k_2 \leq 2} \\{\mathcal{O}(q^2\omega^{k_1+k_2-q})\neq 0,\quad 2<k_1+k_2 \leq n-2.} \end{cases}
\end{equation}

Throughout this paper, we use $\operatorname{Col}({\mathcal A})$ to denote the set of the column vectors of a $m_n\times p$ matrix $A$ and use $Im({\mathcal A})$
to denote the image space of ${\mathcal A}$, which is spanned by the vectors in $\operatorname{Col}({\mathcal A})$.

It follows by (\ref{diffH-new}) and Lemma \ref{3.newlemma} that $\operatorname{Col}({\mathcal C}_{(n)})\nsubseteq \mathbb{S}_n$, namely, $Im({\mathcal C}_{(n)})\nsubseteq \mathbb{S}_n$.
Notice that $b^{(n)} \in \mathbb{S}_n$, we have $b^{(n)}\notin Im({\mathcal C}_{(n)})$, so the system (\ref{equivlinear}) has no solution.
This is the essential difficulty in the analysis of our main results (comparing \cite{Imbert2015}).

\subsubsection{The desired construction}

In this part we design a modification of (\ref{equivlinear}). The basic idea is to define a perturbation of the matrix ${\mathcal C}_{(n)}$ so that the system defined by this perturbation has a uniquely solution
$Z^n$ and the resulting approximation $u_p$ possesses the desired convergence.
Let ${\mathcal A}$ be a $m_n\times p$ matrix. For a chosen column index $l$ ($1\leq l\leq p$) of ${\mathcal A}$, we use
$\{({\mathcal A})_{k_1,k_2}^l\}$ to denote all the entries on the $l$-th column of the matrix $A$, where $\{({\mathcal A})_{k_1,k_2}^l\}$ are arranged as the same order as $\alpha_n$ with the double indices ($k_1, k_2$),
namely, the number $({\mathcal A})^l_{k_1,k_2}$ denote the entry on the $F(k_1,k_2)$-row and the $l$-column of ${\mathcal A}$. For ease of notation, we define an operator $T^l_{k_1,k_2}$ acting on $m_n\times p$ matrices as follows
$$ T^l_{k_1,k_2}{\mathcal A}=\omega^2 \sum_{r=0}^{k_{1}} \sum_{j=0}^{k_{2}} \frac{\partial_{x}^{r} \partial_{y}^{j} \xi(\mathbf{r}_0)}{r ! j !} ({\mathcal A})_{k_1-r, k_2-j}^l. $$

The following lemma define the desired decomposition of the matrix ${\mathcal C}_{(n)}$.

\begin{lemma} \label{Mdecom} Let ${\mathcal C}_{(n)}$ be the matrix defined in Definition 3.1.
There exists a decomposition
\begin{equation} \label{Rdecom}
	{\mathcal C}_{(n)} = \hat{{\mathcal C}}_{(n)} + \tilde{{\mathcal C}}_{(n)}
\end{equation}	
such that $Im(\hat{{\mathcal C}}_{(n)}) \subset \mathbb{S}_n$ and $\tilde{{\mathcal C}}_{(n)}$ is recursively calculated by
\begin{equation}
	\left\{\begin{aligned}
	& (\tilde{{\mathcal C}}_{(n)})_{k_1,k_2}^l = 0, \quad k_1+k_2 \leq 4 \\
	& 2m_{k_1} (\tilde{{\mathcal C}}_{(n)})_{k_1+2,k_2}^l+ 2m_{k_2} (\tilde{{\mathcal C}}_{(n)})_{k_1,k_2+2}^l +T^l_{k_1,k_2}\tilde{{\mathcal C}}_{(n)}  \\
	& \quad = \frac{1}{k_1!k_2!}\partial_x^{k_1}\partial_y^{k_2} \mathcal{L}\varphi_l(\mathbf{r}_0), \quad \forall 2 < k_1+k_2 \leq n-2.
	\end{aligned}\right.\label{tildeM}
\end{equation}

\end{lemma}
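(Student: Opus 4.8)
The plan is to build $\tilde{\mathcal{C}}_{(n)}$ column by column from the recursion (\ref{tildeM}), then set $\hat{\mathcal{C}}_{(n)} := \mathcal{C}_{(n)} - \tilde{\mathcal{C}}_{(n)}$ and verify that every column of $\hat{\mathcal{C}}_{(n)}$ lies in $\mathbb{S}_n$. The organizing principle is that, by the definition (\ref{Rspace}), membership of a vector $\alpha=(\chi_{r,j})$ in $\mathbb{S}_n$ is equivalent to the vanishing of the family of linear forms (my shorthand) $L_{k_1,k_2}(\alpha)=2m_{k_1}\chi_{k_1+2,k_2}+2m_{k_2}\chi_{k_1,k_2+2}+\omega^2\sum_{r=0}^{k_1}\sum_{j=0}^{k_2}\frac{(\partial_x^r\partial_y^j\xi)(\mathbf{r}_0)}{r!j!}\chi_{k_1-r,k_2-j}$ over all $(k_1,k_2)$ with $0\le k_1+k_2\le n-2$. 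Since the $l$-th column of $\mathcal{C}_{(n)}$ is the $\varphi_l$-derived vector, the identity (\ref{3.new-vanishing}) shows $L_{k_1,k_2}$ evaluated on that column equals $\frac{1}{k_1!k_2!}(\partial_x^{k_1}\partial_y^{k_2}\mathcal{L}\varphi_l)(\mathbf{r}_0)$, and the left-hand side of (\ref{tildeM}) is exactly $L_{k_1,k_2}$ applied to the $l$-th column of $\tilde{\mathcal{C}}_{(n)}$. Hence it suffices to produce $\tilde{\mathcal{C}}_{(n)}$ on whose $l$-th column $L_{k_1,k_2}$ takes the same value; by linearity $L_{k_1,k_2}$ then annihilates the $l$-th column of $\hat{\mathcal{C}}_{(n)}$ for all admissible $(k_1,k_2)$, which is precisely the statement that this column lies in $\mathbb{S}_n$.

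First I would show the recursion (\ref{tildeM}) can actually be carried out. Fix a column index $l$ and set $(\tilde{\mathcal{C}}_{(n)})_{k_1,k_2}^l=0$ for all $k_1+k_2\le 4$. Proceeding by levels $s=3,4,\ldots,n-2$, I would read the $s+1$ equations of (\ref{tildeM}) with $k_1+k_2=s$ as a linear system for the $s+3$ level-$(s+2)$ entries $v_t:=(\tilde{\mathcal{C}}_{(n)})_{s+2-t,t}^l$, $0\le t\le s+2$. Writing $(k_1,k_2)=(s-t,t)$ gives $(\tilde{\mathcal{C}}_{(n)})_{k_1+2,k_2}^l=v_t$ and $(\tilde{\mathcal{C}}_{(n)})_{k_1,k_2+2}^l=v_{t+2}$, while $T^l_{s-t,t}\tilde{\mathcal{C}}_{(n)}$ involves only entries of level $\le s$, already fixed. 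Thus the $t$-th equation reads $2m_{s-t}v_t+2m_t v_{t+2}=R_t$ with $R_t$ known, so the system is banded and decouples by the parity of $t$ into an even chain $v_0\to v_2\to\cdots$ and an odd chain $v_1\to v_3\to\cdots$. Since $m_0=1$ and every $m_j>0$, choosing the two free starting values $v_0,v_1$ (say $v_0=v_1=0$) and propagating determines all remaining $v_t$; a particular solution always exists, and iterating over $s$ defines $\tilde{\mathcal{C}}_{(n)}$ completely.

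Next I would verify $Im(\hat{\mathcal{C}}_{(n)})\subset\mathbb{S}_n$. Fix $l$ and $(k_1,k_2)$ with $k_1+k_2\le n-2$ and apply $L_{k_1,k_2}$ to the $l$-th column of $\hat{\mathcal{C}}_{(n)}=\mathcal{C}_{(n)}-\tilde{\mathcal{C}}_{(n)}$; by linearity this is the value on the $\mathcal{C}_{(n)}$-column minus the value on the $\tilde{\mathcal{C}}_{(n)}$-column. The former equals $\frac{1}{k_1!k_2!}(\partial_x^{k_1}\partial_y^{k_2}\mathcal{L}\varphi_l)(\mathbf{r}_0)$ by (\ref{3.new-vanishing}). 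For the latter I distinguish two cases. When $2<k_1+k_2\le n-2$, the defining recursion (\ref{tildeM}) gives exactly the same value. When $k_1+k_2\le 2$, the entries $(\tilde{\mathcal{C}}_{(n)})_{k_1+2,k_2}^l$ and $(\tilde{\mathcal{C}}_{(n)})_{k_1,k_2+2}^l$ sit at level $\le 4$ while $T^l_{k_1,k_2}\tilde{\mathcal{C}}_{(n)}$ uses level-$\le 2$ entries, all of which vanish by the first line of (\ref{tildeM}); hence the $\tilde{\mathcal{C}}_{(n)}$-value is $0$, and by (\ref{diffH-new}) the $\mathcal{C}_{(n)}$-value is $0$ as well. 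In both cases the two values agree, so $L_{k_1,k_2}$ annihilates the $l$-th column of $\hat{\mathcal{C}}_{(n)}$. Since this holds for every admissible $(k_1,k_2)$ and every $l$, each column of $\hat{\mathcal{C}}_{(n)}$ belongs to $\mathbb{S}_n$, and because $\mathbb{S}_n$ is a linear subspace, $Im(\hat{\mathcal{C}}_{(n)})\subset\mathbb{S}_n$.

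The step I expect to be the most delicate is the bookkeeping that makes the recursion simultaneously solvable and consistent with the prescription that all level-$\le 4$ entries vanish. The apparent danger is that the equations of (\ref{tildeM}) at levels $s=3$ and $s=4$ might clash with this prescription; this does not happen precisely because those equations determine only level-$5$ and level-$6$ entries while their $T^l$-terms see only the (zero) low-level entries, and because the residual $\frac{1}{k_1!k_2!}(\partial_x^{k_1}\partial_y^{k_2}\mathcal{L}\varphi_l)(\mathbf{r}_0)$ is genuinely zero for $k_1+k_2\le 2$ by (\ref{diffH-new}). The \emph{banded, parity-decoupled} structure of each level's system, together with the nonvanishing coefficients $m_j$, is what guarantees that the forward propagation never overdetermines an entry, so the construction of the decomposition (\ref{Rdecom}) goes through.
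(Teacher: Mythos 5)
Your proposal is correct and takes essentially the same approach as the paper: apply the identity (\ref{3.new-vanishing}) with $\varphi=\varphi_l$ to the columns of ${\mathcal C}_{(n)}$, define $\hat{{\mathcal C}}_{(n)}={\mathcal C}_{(n)}-\tilde{{\mathcal C}}_{(n)}$, and subtract (\ref{tildeM}) to conclude that the defining linear forms of $\mathbb{S}_n$ annihilate every column of $\hat{{\mathcal C}}_{(n)}$. The extra material you supply --- the level-by-level, parity-decoupled solvability of the recursion (\ref{tildeM}) and the explicit treatment of the case $k_1+k_2\leq 2$ via (\ref{diffH-new}) --- is a sound filling-in of details that the paper's own proof leaves implicit, not a different route.
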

\begin{proof} Let $\varphi$ and $\chi_{k_1,k_2}$ in (\ref{3.new-vanishing}) be replaced by $\varphi_l$ and $({\mathcal C}_{(n)})_{k_1, k_2}^l$ respectively, we get
\begin{eqnarray}
	& 2m_{k_1} ({\mathcal C}_{(n)})_{k_1+2, k_2}^l + 2m_{k_2} ({\mathcal C}_{(n)})_{k_1, k_2+2}^l +T^l_{k_1,k_2}{\mathcal C}_{(n)} \cr
	& = \frac{1}{k_1!k_2!}\partial_x^{k_1}\partial_y^{k_2} \mathcal{L}\varphi_l(\mathbf{r}_0), \quad \forall 0 \leq k_1+k_2 \leq n-2. \label{3.new10}
\end{eqnarray}
Define the $m_n\times p$ matrix $\hat{\mathcal C}_{(n)}$ such that
$$ (\hat{\mathcal C}_{(n)})_{k_1+2, k_2}^l=({\mathcal C}_{(n)})_{k_1+2, k_2}^l-(\tilde{\mathcal C}_{(n)})_{k_1+2, k_2}^l. $$
Then, combining (\ref{3.new10}) and (\ref{tildeM}), yields
$$
	 2m_{k_1} (\hat{\mathcal C}_{(n)})_{k_1+2, k_2}^l + 2m_{k_2} (\hat{\mathcal C}_{(n)})_{k_1, k_2+2}^l +T^l_{k_1,k_2}\hat{{\mathcal C}}_{(n)} = 0.$$
This means that $Im(\hat{{\mathcal C}}_{(n)}) \subset \mathbb{S}_n$ (whose definition was given in (\ref{Rspace})).
\end{proof}

By the condition (\ref{diffH-new}) and the equation (\ref{tildeM}), we deduce that
\begin{equation} \label{diffH}
	(\tilde{{\mathcal C}}_{(n)})_{k_1,k_2}^l =
	\begin{cases} {0, \quad  k_1+k_2 \leq 4} \\{\mathcal{O}(q^2\omega^{k_1+k_2-q-2}) \neq 0, \quad  4<k_1+k_2 \leq n}, \end{cases}
\end{equation}
which means that the entries of the matrix $\tilde{{\mathcal C}}$ are sufficiently small when $q$ is large.

As we will see in Remark 3.1, we have $Im(\hat{\mathcal C}_{(n)})\ni b^{(n)}$. Then we replace the unsolvable linear system (\ref{equivlinear}) by the following
solvable linear system
\begin{equation}\label{equivlinearnew}
	\left\{\begin{aligned}
	& \text { Find } Z^{(n)} \in \mathbb{C}^{p}~~~(p=2n+1) \text { such that } \\
	& \hat{{\mathcal C}}_{(n)}  Z^{(n)} = b^{(n)}.
	\end{aligned}\right.
\end{equation}

For one solution $Z^{(n)}=(z_1~~z_2\cdots z_p)^t$ of the above algebraic system, we define the approximation
\begin{equation}\label{3.interpolation}
u_p(x,y)=(\pi_p u)|_{K_0}(x,y)=\sum\limits_{l=1}^p z_l\varphi_l(x,y),\quad (x,y)\in K_0.
\end{equation}
In order to establish the accuracy of $u_p$, we needs only to estimate $|\sum\limits_{l=1}^p z_lP_{l,n}(x,y)-T_n(x,y)|$. This task will be finished in the next subsection.

\subsection{Analysis} An important problem is how $Z^{(n)}$ depends on $\omega$. To answer this problem, we need more auxiliary results.
The basic ideas are to establish a relation between the coefficient matrix $\hat{\mathcal C}_{(n)}$ and the matrix ${\mathcal C}_{(n)}^{e}$, which has simpler structures.

Throughout this section we use $\{\theta_l\}$ to denote the direction angles given in Subsection 3.1.1 and use $k=F(k_1,k_2)$ to denote the mapping defined in Subsection 3.1.2 (for double indices $(k_{1}, k_{2})$ satisfying $0\leq k_{1}+k_{2}\leq n$).

Define a $m_n$-order diagonal matrix $\Lambda_{(n)}$ such that its entry on the $k$-row and $k$-column ($k=F(k_1,k_2)$)
equals the number $\frac{(i\omega\eta(\mathbf{r}_0))^{(k_1+k_2)}}{k_1!k_2!}$.
Let ${\mathcal C}_{(n)}^{e}$ and ${\mathcal C}_{(n)}$ denote the matrices defined in Definition \ref{matrix}. The following lemma gives properties of the two matrices.

\begin{lemma} \label{matrx-decom} The matrices ${\mathcal C}_{(n)}^{e}$ and ${\mathcal C}_{(n)}$ admit the decompositions:
\begin{equation} \label{pw-goM}
	{\mathcal C}_{(n)}^e = \Lambda_{(n)}  \hat{\mathcal C}_{(n)}^e\quad\mbox{and}\quad {\mathcal C}_{(n)} = {\mathcal C}_{(n)}^e + \sum_{j=0}^{n-1} (i\omega)^j (\sum_{s = 0}^{q-2}(i\omega)^{-s} \hat{\mathcal C}_{(n)}^{s, j}),
\end{equation}
where $\hat{\mathcal C}_{(n)}^e$ and $\hat{\mathcal C}_{(n)}^{s, j}$ are two $m_n\times p$ matrices with $\omega$-independent elements. Moreover, the elements of
$\hat{\mathcal C}_{(n)}^e$ are given by
\begin{equation}\label{mnc-ref}
	(\hat{\mathcal C}_{(n)}^e)_{k_1,k_2}^l = \cos^{k_1}\theta_l\sin^{k_2}\theta_l,\quad k_1+k_2\leq n
\end{equation}
and the matrix $\hat{\mathcal C}_{(n)}^{s, j}$ has zero elements on the beginning $m_j$ rows.
\end{lemma}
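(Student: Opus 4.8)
The plan is to prove the two identities in (\ref{pw-goM}) separately, since the first is a direct Taylor computation while the second requires a careful power-counting in $i\omega$.

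For the first identity I would simply expand the plane wave. Factoring $e_l$ into a product of two one-variable exponentials and multiplying their series gives, for $k_1+k_2\le n$,
$$ c^{(e_l)}_{k_1,k_2}=\frac{(i\omega\sqrt{\xi(\mathbf r_0)})^{k_1+k_2}}{k_1!\,k_2!}\cos^{k_1}\theta_l\sin^{k_2}\theta_l. $$
The first factor is exactly the $F(k_1,k_2)$-diagonal entry of $\Lambda_{(n)}$ (with $\eta(\mathbf r_0)=\sqrt{\xi(\mathbf r_0)}$) and the second is $(\hat{\mathcal C}^e_{(n)})^l_{k_1,k_2}$ in (\ref{mnc-ref}); reading this off row by row yields $\mathcal C^e_{(n)}=\Lambda_{(n)}\hat{\mathcal C}^e_{(n)}$ and shows $\hat{\mathcal C}^e_{(n)}$ is $\omega$-independent.

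For the second identity I would split the phase as $\tau_l=\tau_l^{(1)}+R_l$, where $\tau_l^{(1)}(x,y)=\sqrt{\xi(\mathbf r_0)}(\cos\theta_l(x-x_0)+\sin\theta_l(y-y_0))$ is the linear part (so that $e_l=e^{i\omega\tau_l^{(1)}}$) and $R_l=\sum_{i+j\ge2}\lambda^l_{i,j}(x-x_0)^i(y-y_0)^j$ collects the terms of degree $\ge2$. Each summand of $\varphi_l=\sum_{s=0}^{q-2}(i\omega)^{-s}\varphi_{l,s}$ then factors as $\varphi_{l,s}=e_l\cdot g_{l,s}$ with $g_{l,s}=a_{l,s}\,e^{i\omega R_l}$, and expanding the exponential gives $g_{l,s}=\sum_{m\ge0}(i\omega)^m\,a_{l,s}R_l^m/m!$. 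Here every coefficient polynomial $a_{l,s}R_l^m/m!$ is $\omega$-independent (the $\lambda^l_{i,j}$ and the coefficients of $a_{l,s}$ are $\omega$-independent by the constructions of Subsections \ref{directau} and \ref{direca}) and has lowest degree $\ge2m$, since $R_l$ has neither constant nor linear term. Taking Taylor coefficients of the product $e_l\cdot g_{l,s}$ is a convolution of the coefficient arrays; combined with $\mathcal C_{(n)}=\sum_{s=0}^{q-2}(i\omega)^{-s}\mathcal C^s_{(n)}$ from (\ref{first-decomp}), this expands every entry of $\mathcal C_{(n)}$ as a polynomial in $i\omega$ with $\omega$-independent coefficients, and collecting the coefficient of $(i\omega)^j$ on the rows indexed by $(k_1,k_2)$ defines the matrices $\hat{\mathcal C}^{s,j}_{(n)}$.

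The decisive step is the power count. In the convolution the degree-$d$ entry ($k_1+k_2=d$) of $\varphi_{l,s}$ receives $(i\omega)^{r_1+r_2}$ from $e_l$ and $(i\omega)^m$ from $g_{l,s}$ subject to $(k_1-r_1)+(k_2-r_2)\ge2m$, so the total power never exceeds $d$; hence only $j\le n$ occur and each $\hat{\mathcal C}^{s,j}_{(n)}$ is $\omega$-independent. The single highest power $(i\omega)^d$ at degree $d$ forces $r_1+r_2=d$ and $m=0$, i.e.\ it equals the degree-$d$ entry of $e_l$ times $g_{l,s}(\mathbf r_0)=a_{l,s}(\mathbf r_0)$ (using $R_l(\mathbf r_0)=0$). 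By the normalization $a(\mathbf r_0)=1$ one has $a_{l,0}(\mathbf r_0)=1$ and $a_{l,s}(\mathbf r_0)=0$ for $s\ge1$, so this top-power term is precisely the $\mathcal C^e_{(n)}$ contribution when $s=0$ and vanishes when $s\ge1$. Subtracting $\mathcal C^e_{(n)}$ once thus removes the power $(i\omega)^d$ at every degree $d$, which caps the surviving powers at $j\le n-1$ and, since reaching power $j$ on a degree-$j$ row again requires $r_1+r_2=j,\ m=0$, forces $\hat{\mathcal C}^{s,j}_{(n)}$ to vanish on the first $m_j$ rows. I expect this last point --- verifying the vanishing of the degree-$j$ row of $\hat{\mathcal C}^{s,j}_{(n)}$, which hinges on the barycentric normalization $a_{l,s}(\mathbf r_0)=\delta_{s,0}$ rather than merely on the degree bound $2m\ge j$ --- to be the main obstacle and the part that must be argued with care.
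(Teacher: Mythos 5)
Your proposal is correct, and at the level of strategy it is the same as the paper's: both proofs expand the Taylor coefficients of $\varphi_{l,s}$ at $\mathbf{r}_0$ in powers of $i\omega$, identify the top power $(i\omega)^{k_1+k_2}$ on each row with $a_{l,s}(\mathbf{r}_0)$ times the corresponding row of ${\mathcal C}_{(n)}^e$, and then use the normalization of $a_l$ at $\mathbf{r}_0$ to collapse $\sum_s(i\omega)^{-s}a_{l,s}(\mathbf{r}_0)\,{\mathcal C}_{(n)}^e$ into ${\mathcal C}_{(n)}^e$; this is exactly the paper's intermediate decomposition (\ref{3.new11}) fed into (\ref{first-decomp}). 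The difference is one of rigor rather than of route, and it is in your favor. The paper justifies the row structure of $\hat{\mathcal C}_{(n)}^{s,j}$ only by remarking that the scale $(i\omega)^j$ comes from derivatives of order at least $j$; that observation alone yields vanishing on the first $m_{j-1}$ rows (and indeed the paper's proof says $m_{j-1}$, in conflict with the statement and with its later use in (\ref{mnsj}), which need $m_j$). Your factorization $\varphi_{l,s}=e_l\cdot a_{l,s}e^{i\omega R_l}$, together with the lowest-degree bound $\deg_{\min} R_l^m\geq 2m$, supplies precisely the missing step: on a degree-$j$ row the power $(i\omega)^j$ can arise only from $r_1+r_2=j$, $m=0$, i.e.\ from the term $a_{l,s}(\mathbf{r}_0)\,{\mathcal C}_{(n)}^e$ that is subtracted off, so the degree-$j$ rows of $\hat{\mathcal C}_{(n)}^{s,j}$ vanish as well. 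So the point you flagged as the main obstacle is exactly where the paper's own write-up is loose, and your convolution/power-counting argument settles it. Two minor reconciliations with the paper's notation are worth recording: your reading $\eta(\mathbf{r}_0)=\sqrt{\xi(\mathbf{r}_0)}$ (and the exponent of $e_l$ carrying $\sqrt{\xi(\mathbf{r}_0)}$ rather than the $\xi(\mathbf{r}_0)$ printed in Subsection 3.1.1) is the one consistent with $(\lambda_{1,0}^l,\lambda_{0,1}^l)=\sqrt{\xi(\mathbf{r}_0)}(\cos\theta_l,\sin\theta_l)$, and your per-index normalization $a_{l,s}(\mathbf{r}_0)=\delta_{s,0}$ is the identity-in-$\omega$ reading of the paper's regulation $\sum_s(i\omega)^{-s}a_{l,s}(\mathbf{r}_0)=1$, legitimate because the $a_{l,s}$ are $\omega$-independent.
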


\begin{proof} The decomposition of the matrix ${\mathcal C}_{(n)}^e$ can be directly obtained by the definition of $e_l(x,y)$ and Definition \ref{matrix}.
The derivatives $\partial_{x}^{k_1} \partial_{y}^{k_2} \varphi_{l,s}(\mathbf{r}_0)$ can be expanded into the sum of different scales of $(i\omega)$. Hence, for each $0\leq s\leq q-2$, we decompose the matrix ${\mathcal C}_{(n)}^s$ with different orders of $(i\omega)$:
\begin{equation}
	{\mathcal C}_{(n)}^s = a_{l,s}(\mathbf{r}_0) \cdot {\mathcal C}_{(n)}^e + \sum_{j=0}^{n-1} (i\omega)^j \hat{\mathcal C}_{(n)}^{s, j}, \label{3.new11}
\end{equation}
where the matrices $\hat{\mathcal C}_{(n)}^{s, j}$ ($ 0\leq j\leq n-1$) are all $\omega$-independent, and has zero elements on the beginning $m_{j-1}$ rows because the scale $(i\omega)^j$ of the matrix $\hat{\mathcal C}_{(n)}^{s, j}$ in (\ref{3.new11}) comes from the derivatives of the functions $\varphi_{l,s}$ ($l=1,\cdots,p$),
whose orders are not less than $j$.
Using the regulation
$$a_{l}(\mathbf{r}_0) = \sum_{s = 0}^{q-2} (i\omega)^{-s} a_{l,s}(\mathbf{r}_0) = 1$$
and substituting (\ref{3.new11}) into (\ref{first-decomp}) yields
\begin{eqnarray*}
	{\mathcal C}_{(n)} &= \sum_{s = 0}^{q-2} (i\omega)^{-s} (a_{l,s}(\mathbf{r}_0) \cdot {\mathcal C}_{(n)}^e + \sum_{j=0}^{n-1} (i\omega)^j \hat{\mathcal C}_{(n)}^{s, j})\\
		& = {\mathcal C}_{(n)}^e + \sum_{j=0}^{n-1} (i\omega)^j (\sum_{s = 0}^{q-2}(i\omega)^{-s} \hat{\mathcal C}_{(n)}^{s, j}).
\end{eqnarray*}
\end{proof}

For convenience, define a $m_n\times m_n$ matrix $$
	\mathrm{Q}_n^j = \left(\begin{array}{cc} 	0_{m_j\times m_j} & \\	& I_{(n,j)} \end{array}\right),
$$
where $I_{(n,j)}$ denotes the $(m_n-m_j)\times (m_n-m_j)$ identical matrix. Since the matrices $\hat{\mathcal C}_{(n)}^{s, j}$ has zero elements on the leading $m_j$ rows, we have
\begin{equation} \label{mnsj}
	\hat{\mathcal C}_{(n)}^{s, j} = \mathrm{Q}_n^j  \hat{\mathcal C}_{(n)}^{s, j}, \quad  0\leq j\leq n-1.
\end{equation}

The following lemma explicitly gives the rank properties of the matrix $\hat{\mathcal C}_{(n)}^e$.
\begin{lemma} \label{rankmnc} Let $\hat{\mathcal C}_{(n)}^e$ be defined by Lemma \ref{matrx-decom} and $\mathrm{Q}_n^j$ be defined above. Then
\begin{equation} \label{rank}
	Rank(\mathrm{Q}_n^j \hat{\mathcal C}_{(n)}^e) =
	\begin{cases}
		n+1, \quad & j = n-1;\\
		2n+1, \quad & j < n-1.
	\end{cases}
\end{equation}
\end{lemma}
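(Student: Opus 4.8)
The plan is to pass from the monomial entries to a Fourier (exponential) representation, in which the equispaced angles $\theta_l = 2\pi(l-1)/(2n+1)$ turn the relevant matrix into an invertible discrete Fourier transform. Writing $\zeta_l = e^{i\theta_l}$, each entry $\cos^{k_1}\theta_l\sin^{k_2}\theta_l = \bigl(\tfrac{\zeta_l+\zeta_l^{-1}}{2}\bigr)^{k_1}\bigl(\tfrac{\zeta_l-\zeta_l^{-1}}{2i}\bigr)^{k_2}$ is a trigonometric polynomial of degree $k_1+k_2$, i.e. a combination $\sum_m c^{(k_1,k_2)}_m e^{im\theta_l}$ over frequencies $m$ with $|m|\le k_1+k_2$ and $m\equiv k_1+k_2 \pmod 2$. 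Collecting these coefficients into an $m_n\times(2n+1)$ matrix $B$ with $B_{(k_1,k_2),m}=c^{(k_1,k_2)}_m$ (rows ordered by the map $F$) and letting $E$ be the $(2n+1)\times(2n+1)$ matrix $E_{m,l}=e^{im\theta_l}$ for $-n\le m\le n$, I would record the factorization $\hat{\mathcal C}_{(n)}^e = B E$. Since the $\zeta_l$ are the distinct $(2n+1)$-th roots of unity, $E = W\,\mathrm{diag}(\zeta_l^{-n})$ with $W$ a nonsingular Vandermonde matrix, so $E$ is invertible and right-multiplication by $E$ preserves rank; hence $Rank(\mathrm{Q}_n^j\hat{\mathcal C}_{(n)}^e) = Rank(\mathrm{Q}_n^j B)$ for every $j$.

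Next I would reinterpret $Rank(\mathrm{Q}_n^j B)$ as a dimension of trigonometric polynomials. Because the Fourier-coefficient map is a linear isomorphism between $\mathrm{Trig}_n := \operatorname{span}\{e^{im\theta}:|m|\le n\}$ and $\mathbb{C}^{2n+1}$, and $\mathrm{Q}_n^j$ annihilates exactly the rows of degree $k_1+k_2\le j$, the rank of $\mathrm{Q}_n^j B$ equals the dimension of the span of the surviving monomials $\{\cos^{k_1}\theta\sin^{k_2}\theta: j+1\le k_1+k_2\le n\}$ viewed as functions. The crucial tool is the identity $\cos^2\theta+\sin^2\theta=1$: multiplying a monomial of degree $d$ by it rewrites it as a combination of monomials of degree $d+2$ of the same parity, so a degree-$d$ monomial lies in the span of the monomials of any degree $d'\ge d$ with $d'\equiv d\pmod 2$ and $d'\le n$.

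I then split into the two cases. For $j=n-1$ only the degree-$n$ monomials $\cos^{k_1}\theta\sin^{n-k_1}\theta$ ($k_1=0,\dots,n$) survive; substituting $t=\tan\theta$ gives $\sum_{k_1}c_{k_1}\cos^{k_1}\theta\sin^{n-k_1}\theta=\cos^n\theta\sum_{k_1}c_{k_1}t^{\,n-k_1}$, so a vanishing combination forces the polynomial in $t$ to vanish and all $c_{k_1}=0$; these $n+1$ functions are independent, whence $Rank(\mathrm{Q}_n^{n-1}\hat{\mathcal C}_{(n)}^e)=n+1$. For $j<n-1$ both degrees $n$ and $n-1$ survive; the same $\tan\theta$ argument shows the $n+1$ degree-$n$ monomials and the $n$ degree-$(n-1)$ monomials are independent, so they span the subspaces of $\mathrm{Trig}_n$ carried by frequencies $m\equiv n$ and $m\equiv n-1 \pmod 2$, of dimensions $n+1$ and $n$ respectively. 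These subspaces are complementary and their direct sum is all of $\mathrm{Trig}_n$, so the surviving span has dimension $2n+1$ and $Rank(\mathrm{Q}_n^j\hat{\mathcal C}_{(n)}^e)=2n+1$.

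The main obstacle I anticipate is bookkeeping rather than conceptual: one must verify carefully that $\mathrm{Q}_n^j$ removes precisely the degree-$\le j$ rows, that the factorization $\hat{\mathcal C}_{(n)}^e=BE$ respects the ordering $F$, and that the two parity subspaces spanned by the top two degrees of monomials are genuinely complementary inside $\mathrm{Trig}_n$ so that their dimensions add to $2n+1$. The invertibility of $E$ and the $\tan\theta$ independence argument are the load-bearing analytic facts; once they are in place, the rank count follows immediately.
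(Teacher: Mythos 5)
Your proof is correct and follows essentially the same route as the paper: both reduce the rank computation to the Fourier/roots-of-unity structure, exploiting that a degree-$d$ trigonometric monomial $\cos^{k_1}\theta\,\sin^{k_2}\theta$ carries only frequencies of parity $d=k_1+k_2$, so that the degree-$n$ and degree-$(n-1)$ blocks together fill all $2n+1$ frequencies, with invertibility of the Vandermonde/DFT matrix at the equispaced angles $\theta_l$ supplying the final rank count. If anything, your explicit factorization $\hat{\mathcal C}_{(n)}^e = BE$ together with the $\tan\theta$ independence argument makes rigorous a step the paper leaves implicit, namely that the degree-$(j+1)$ monomials span the \emph{full} parity subspace (the paper asserts the equality $Im((\hat{\mathcal C}_{(n)}^{e,j})^T) = \mathrm{span}\{\mathrm{z}_j^{k},\ k=0,2,\dots,2(j+1)\}$ without justifying surjectivity), so your write-up is slightly more complete than the original.
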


\begin{proof} From the definition of $\mathrm{Q}_n^j$, we can see that the matrix $(\mathrm{Q}_n^j - \mathrm{Q}_n^{j+1}) \hat{\mathcal C}_{(n)}^e$ at most has $m_{j+1}-m_j$ ($=j+2$) nonzero rows ($0\leq j \leq n-1$).
Let $\hat{\mathcal C}_{(n)}^{e,j}$ denote such a nonzero $(j+2)\times(2n+1)$ sub-matrix of $(\mathrm{Q}_n^j - \mathrm{Q}_n^{j+1}) \hat{\mathcal C}_{(n)}^e$. It follows by (\ref{mnc-ref}) that
$$
	(\hat{\mathcal C}_{(n)}^{e,j})_{k, l} = (\cos~\theta_l)^{j+2-k}(\sin~\theta_l)^{k-1}, \quad 1\leq k\leq j+2,~1\leq l\leq 2n+1.
$$
Set $z_l = \cos(\theta_l) + i \sin(\theta_l) = (\cos(\theta_l) - i \sin(\theta_l))^{-1}$. Then $|z_l| = 1$, and we have
\begin{eqnarray*}
	(\hat{\mathcal C}_{(n)}^{e,j})_{k, l} &= (\cos~\theta_l)^{j+2-k}(\sin~\theta_l)^{k-1} = 2^{-(j+2)} (z_l + z_l^{-1})^{j+2-k} (z_l - z_l^{-1})^{k-1} \\
	& = 2^{-(j+2)} \sum_{k_1 = 0}^{j+2-k}\sum_{k_2 = 0}^{k-1} \frac{(j+2-k)!}{k_1!(j+2-k-k_1)!} \frac{(k-1)!}{k_2!(k-1-k_2)!} z_l^{j+1-2(k_1+k_2)}.
\end{eqnarray*}
Define $\mathrm{z}_j^k = (z_1^{j+1-k}, \cdots, z_l^{j+1-k}, \cdots, z_{2n+1}^{j+1-k})^T$. By the above equality, we deduce that
\begin{equation*}
	Im((\hat{\mathcal C}_{(n)}^{e,j})^T) = span\{\mathrm{z}_j^{k}, k=0,2,\cdots,2(j+1)\}
\end{equation*}
 and
 \begin{equation*}
 	Rank(\hat{\mathcal C}_{(n)}^{e,j}) = dim(span\{\mathrm{z}_j^{k}, k=0,2,\cdots,2(j+1)\}) = j+2.
\end{equation*}
Indeed, taking $j=n-1$ and $j = n-2$, we have
\begin{equation*}
	\begin{aligned}
	Im((\hat{\mathcal C}_{(n)}^{e,n-1})^T) &= span\{\mathrm{z}_{n-1}^{k}, k=0,2,\cdots,2n\} \\
	Im((\hat{\mathcal C}_{(n)}^{e,n-2})^T) &= span\{\mathrm{z}_{n-2}^{k}, k=0,2,\cdots,2(n-1)\} \\
	& = span\{\mathrm{z}_{n-1}^{k}, k=1,3,\cdots,2n-1\}.
	\end{aligned}
\end{equation*}
Thus the nonzero block matrix of $(\mathrm{Q}_n^{n-2} - \mathrm{Q}_n^n) \cdot \hat{\mathcal C}_{(n)}^e$ satisfies
 \begin{equation*}
	Im(((\mathrm{Q}_n^{n-2} - \mathrm{Q}_n^n)\hat{\mathcal C}_{(n)}^e)^T) = span\{\mathrm{z}_{n-1}^k, k=0,1,\cdots,2n\}.
\end{equation*}
Then we have
 \begin{equation*}
 	Rank(\hat{\mathcal C}_{(n)}^{e,j}) = 2n+1 \quad \text{for } j < n-1.
\end{equation*}
\end{proof}

The following lemma gives a relation between the matrices ${\mathcal C}_{(n)}^e$ and  $\hat{{\mathcal C}}_{(n)}$, which were
defined by Definition \ref{matrix} and Lemma \ref{Mdecom} respectively.

\begin{lemma} \label{pw-go}
	Assume that $\xi \in \mathcal{C}^{q-1}(K_0)$ for $q\geq 2$, where $K_0$ denotes a generic element with the barycenter ${\bf r}_0$. Let ${\mathcal C}_{(n)}^e$ and $\hat{{\mathcal C}}_{(n)}$ be defined by Definition \ref{matrix} and Lemma \ref{Mdecom} respectively. Then there
exists a $m_n\times m_n$ matrix $\mathrm{T}_n$ such that
	\begin{equation}
		\hat{{\mathcal C}}_{(n)} = \mathrm{T}_n  {\mathcal C}_{(n)}^e, \label{3.new12}
	\end{equation}
where $\mathrm{T}_n$ is strictly diagonally-dominant for a sufficiently large $\omega$. Moreover,
we have $\lim\limits_{\omega\rightarrow+\infty}\|T_n-I\|_{\infty}=0$.

\end{lemma}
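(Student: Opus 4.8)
The plan is to prove Lemma 3.7 by establishing the factorization $\hat{\mathcal C}_{(n)}=\mathrm{T}_n{\mathcal C}_{(n)}^e$ and then showing that $\mathrm{T}_n$ is close to the identity for large $\omega$. First I would exploit the two decompositions already available: Lemma \ref{matrx-decom} gives ${\mathcal C}_{(n)}^e=\Lambda_{(n)}\hat{\mathcal C}_{(n)}^e$ with the invertible diagonal matrix $\Lambda_{(n)}$ (its diagonal entries $\frac{(i\omega\eta({\bf r}_0))^{k_1+k_2}}{k_1!k_2!}$ are nonzero), and it also gives ${\mathcal C}_{(n)}={\mathcal C}_{(n)}^e+\sum_{j=0}^{n-1}(i\omega)^j\big(\sum_{s=0}^{q-2}(i\omega)^{-s}\hat{\mathcal C}_{(n)}^{s,j}\big)$. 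Since ${\mathcal C}_{(n)}^e$ is invertible up to the diagonal scaling (more precisely $\Lambda_{(n)}$ is invertible and $\hat{\mathcal C}_{(n)}^e$ has full column structure recorded in Lemma \ref{rankmnc}), I can write any matrix in the column span as $\mathrm{T}_n{\mathcal C}_{(n)}^e$ provided the column image is preserved, which is exactly what $Im(\hat{\mathcal C}_{(n)})\subset\mathbb{S}_n$ and the construction of $\hat{\mathcal C}_{(n)}$ supply. The cleanest route is to define $\mathrm{T}_n:=\hat{\mathcal C}_{(n)}({\mathcal C}_{(n)}^e)^{-1}$ after checking ${\mathcal C}_{(n)}^e$ is invertible as an $m_n\times p$ object in the appropriate quotient sense; alternatively, since both sides share the same column-index set, I would determine $\mathrm{T}_n$ row by row using the recursive structure in \eqref{tildeM}.

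Next I would quantify the perturbation $\mathrm{T}_n-I$. Writing $\hat{\mathcal C}_{(n)}={\mathcal C}_{(n)}-\tilde{\mathcal C}_{(n)}$ from Lemma \ref{Mdecom}, I substitute the second decomposition of ${\mathcal C}_{(n)}$ and the estimate \eqref{diffH} for the entries of $\tilde{\mathcal C}_{(n)}$, namely $(\tilde{\mathcal C}_{(n)})_{k_1,k_2}^l=\mathcal{O}(q^2\omega^{k_1+k_2-q-2})$ for $4<k_1+k_2\leq n$ and zero otherwise. The key point is that the diagonal scaling $\Lambda_{(n)}$ carries the dominant power $\omega^{k_1+k_2}$ on the row $F(k_1,k_2)$, while every off-diagonal contribution — both the $\sum_j(i\omega)^j\sum_s(i\omega)^{-s}\hat{\mathcal C}_{(n)}^{s,j}$ correction to ${\mathcal C}_{(n)}$ and the subtracted $\tilde{\mathcal C}_{(n)}$ — carries a strictly lower net power of $\omega$ on that row. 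Using \eqref{mnsj} (the matrices $\hat{\mathcal C}_{(n)}^{s,j}$ vanish on the leading $m_j$ rows) I would show that after multiplying on the right by $({\mathcal C}_{(n)}^e)^{-1}=(\hat{\mathcal C}_{(n)}^e)^{-1}\Lambda_{(n)}^{-1}$ the diagonal of $\mathrm{T}_n$ tends to $1$ and every off-diagonal entry is $\mathcal{O}(\omega^{-1})$ (up to the benign factor $q^2$, which is controlled by the hypothesis $q\leq(\omega h)^{-1/2}$, so $q^2\omega^{-1}\to0$). Strict diagonal dominance for large $\omega$ then follows because each row sum of off-diagonal moduli is $\mathcal{O}(q^2\omega^{-1})\to 0$ while the diagonal entry is bounded below, and $\|\mathrm{T}_n-I\|_\infty=\mathcal{O}(q^2\omega^{-1})\to0$ gives the limit.

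The main obstacle I anticipate is the bookkeeping of $\omega$-powers across the row-index grading induced by $F(k_1,k_2)$: one must verify that the right-multiplication by $\Lambda_{(n)}^{-1}$ (which divides row $F(k_1,k_2)$ by $\omega^{k_1+k_2}$) genuinely demotes each perturbation term to order $\omega^{-1}$ relative to the identity, rather than merely to order $\omega^{0}$. This requires tracking, term by term, that the $(i\omega)^{j-s}$ factors appearing in $\hat{\mathcal C}_{(n)}^{s,j}$ together with the leading-row-vanishing property \eqref{mnsj} force the net exponent on row $F(k_1,k_2)$ to be at most $k_1+k_2-1$, and likewise that the $\tilde{\mathcal C}_{(n)}$ entries, weighted by $\omega^{-(k_1+k_2)}$, inherit the gap from the exponent $k_1+k_2-q-2$ with $q\geq 2$. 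I would organize this as a single lemma-internal estimate, handling the top-degree rows $k_1+k_2=n$ and $n-1$ separately (where Lemma \ref{rankmnc} shows $\hat{\mathcal C}_{(n)}^e$ loses rank) to ensure $({\mathcal C}_{(n)}^e)^{-1}$ is interpreted consistently, and then conclude by the standard Gershgorin/diagonal-dominance argument.
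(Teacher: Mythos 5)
Your strategy of splitting $\hat{\mathcal C}_{(n)}$ into the plane-wave matrix plus perturbations and tracking $\omega$-powers is in the right spirit, but your ``cleanest route'' --- defining $\mathrm{T}_n:=\hat{\mathcal C}_{(n)}({\mathcal C}_{(n)}^e)^{-1}$ through a generalized inverse --- contains a fatal structural gap. The matrix ${\mathcal C}_{(n)}^e$ is $m_n\times p$ with $m_n=\frac{(n+1)(n+2)}{2}>2n+1=p$ (already $6>5$ for $n=2$), so it is tall: it admits left inverses $L$ (with $L{\mathcal C}_{(n)}^e=I_p$) but no right inverse, and ${\mathcal C}_{(n)}^e L$ is a rank-$p$ oblique projection, \emph{not} $I_{m_n}$. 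Consequently $\mathrm{T}_n:=\hat{\mathcal C}_{(n)}L$ does satisfy $\mathrm{T}_n{\mathcal C}_{(n)}^e=\hat{\mathcal C}_{(n)}$, i.e.\ the existence part of (\ref{3.new12}), but it has rank at most $p<m_n$; hence there is $v\neq 0$ with $\mathrm{T}_n v=0$, so $\|\mathrm{T}_n-I\|_{\infty}\geq \|(\mathrm{T}_n-I)v\|_{\infty}/\|v\|_{\infty}=1$ for \emph{every} $\omega$, and $\mathrm{T}_n$ can never be strictly diagonally dominant either (a strictly diagonally dominant matrix is nonsingular). Your second paragraph inherits this error: expanding $(\,{\mathcal C}_{(n)}^e+E)({\mathcal C}_{(n)}^e)^{-1}$ and concluding that ``the diagonal tends to $1$'' implicitly uses ${\mathcal C}_{(n)}^e({\mathcal C}_{(n)}^e)^{-1}=I$, which is exactly what fails for a tall matrix. (Also, your appeal to $Im(\hat{\mathcal C}_{(n)})\subset\mathbb{S}_n$ is beside the point: solvability of $X{\mathcal C}_{(n)}^e=\hat{\mathcal C}_{(n)}$ in $X$ is a statement about row spaces and is automatic once ${\mathcal C}_{(n)}^e$ has full column rank; $\mathbb{S}_n$ plays no role in the factorization step.)

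The paper's proof avoids precisely this trap by never composing ${\mathcal C}_{(n)}^e$ with an inverse of itself: the identity is kept exact and \emph{only the perturbation is factored}. Writing $\hat{\mathcal C}_{(n)}={\mathcal C}_{(n)}^e+E$ with $E=\sum_{j=0}^{n-1}(i\omega)^j\sum_{s=0}^{q-2}(i\omega)^{-s}\hat{\mathcal C}_{(n)}^{s,j}-\tilde{\mathcal C}_{(n)}$, it invokes Lemma \ref{rankmnc} (the $2n+1$ nonzero rows of $\mathrm{Q}_n^{n-2}\hat{\mathcal C}_{(n)}^e$ span $\mathbb{C}^{p}$) to factor each block separately,
\begin{equation*}
\hat{\mathcal C}_{(n)}^{s,j}=\bigl(\mathrm{Q}_n^j\mathrm{T}_{n,s}^j\mathrm{Q}_n^{n-2}\bigr)\Lambda_{(n)}^{-1}{\mathcal C}_{(n)}^e,
\qquad
\tilde{\mathcal C}_{(n)}=(i\omega)^{-(n+1)}\mathrm{Q}_n^4\tilde{\mathrm{T}}_n\mathrm{Q}_n^{n-2}{\mathcal C}_{(n)}^e,
\end{equation*}
with $\omega$-independent $\mathrm{T}_{n,s}^j$ and $\tilde{\mathrm{T}}_n$, and then \emph{defines} $\mathrm{T}_n:=I+\sum_{j}\sum_{s}(i\omega)^{j-s}\mathrm{Q}_n^j\mathrm{T}_{n,s}^j\mathrm{Q}_n^{n-2}\Lambda_{(n)}^{-1}-(i\omega)^{-(n+1)}\mathrm{Q}_n^4\tilde{\mathrm{T}}_n\mathrm{Q}_n^{n-2}$. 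With this definition $\mathrm{T}_n{\mathcal C}_{(n)}^e=\hat{\mathcal C}_{(n)}$ holds by construction, and $\mathrm{T}_n-I$ consists only of the factored perturbation terms, which are damped by the negative $\omega$-powers that $\Lambda_{(n)}^{-1}$ places on the columns of total degree $\geq n-1$ and by the prefactor $(i\omega)^{-(n+1)}$. If you rework your argument, replace the pseudoinverse definition by this ``identity plus factored remainder'' construction; the row-wise $\omega$-power bookkeeping you sketch (via (\ref{mnsj}) and (\ref{diffH})) then applies to $E$ alone, which is essentially the paper's estimate.
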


\begin{proof}
It follows by Lemma \ref{rankmnc} that the matrix $\mathrm{Q}_n^j \hat{\mathcal C}_{(n)}^e$ is of full-row rank when $j = n-1$ and is of full-column rank when $j = n-2$. Then, for all $j \leq n-1$, we have
\begin{equation*}
Im ((\hat{\mathcal C}_{(n)}^{s, j})^T) \subset Im((\mathrm{Q}_n^{n-2} \hat{\mathcal C}_{(n)}^e)^T)
\end{equation*}
and there exists a $m_n\times m_n$ matrix $\mathrm{T}_{n,s}^j$ such that
\begin{equation*}
	\hat{\mathcal C}_{(n)}^{s, j} =  \mathrm{T}_{n,s}^j \mathrm{Q}_n^{n-2} \hat{\mathcal C}_{(n)}^e.
\end{equation*}
By (\ref{mnsj}), we further obtain
\begin{equation} \label{decomrela}
\hat{\mathcal C}_{(n)}^{s, j} = (\mathrm{Q}_n^j \mathrm{T}_{n,s}^j \mathrm{Q}_n^{n-2})\hat{\mathcal C}_{(n)}^e, \quad j < n-1.
\end{equation}
It is easy to see that the matrix $\mathrm{Q}_n^j \mathrm{T}_{n,s}^j \mathrm{Q}_n^{n-2} $ has the following structure
\begin{equation} \label{matrixdecomp}
\mathrm{Q}_n^j \mathrm{T}_{n,s}^j \mathrm{Q}_n^{n-2}  = \left(\begin{array}{cc} 	0_{m_{j}\times m_{n-2}} & 0_{m_j\times(2n+1)}\\	0_{(m_n-m_j)\times m_{n-2}} & \hat{T}_{n,s}^j \end{array}\right),
\end{equation}
where $\{\hat{T}_{n,s}^j\}_{j=0}^{n-2}$  is $\omega$-independent but only depends on $\theta_l$ and the derivatives of $\xi$ at $\mathbf{r}_0$ due to the $\omega$-independent matrices $\hat{\mathcal C}_{(n)}^{s, j}$ and $\hat{\mathcal C}_{(n)}^e$ (noting that $2n+1=m_n-m_{n-2}$).

On the other hand, due to the property (\ref{diffH}) of $\tilde{{\mathcal C}}_{(n)}$, there exists a matrix $\tilde{\mathrm{T}}_n$ such that
\begin{equation} \label{decomrela1}
\tilde{{\mathcal C}}_{(n)} = (i\omega)^{-(n+1)}\mathrm{Q}_n^4 \tilde{\mathrm{T}}_n \mathrm{Q}_n^{n-2} {\mathcal C}_{(n)}^e,
\end{equation}
where $\tilde{\mathrm{T}}_n$ is $\omega$-independent but only depends on $\theta_l$ and the derivatives of $\xi$ at $\mathbf{r}_0$.
Using the decomposition (\ref{Rdecom}) and (\ref{pw-goM}), together with (\ref{decomrela}) and (\ref{decomrela1}), the matrix $\mathrm{T}_n$ satisfying (\ref{3.new12}) can be defined as
	\begin{equation}
		T_n = I + \sum_{j = 0}^{n-1} (i\omega)^j (\sum_{s=0}^{q-2}(i\omega)^{-s}\mathrm{Q}_n^j \mathrm{T}_{n,s}^j \mathrm{Q}_n^{n-2} )\Lambda_{(n)}^{-1} - (i\omega)^{-(n+1)}\mathrm{Q}_n^4 \tilde{\mathrm{T}}_n \mathrm{Q}_n^{n-2}.
	\end{equation}
Moreover, by (\ref{matrixdecomp}) and the definition of $\Lambda_{(n)}$ we know that the matrix $T_n$ satisfies the other conditions required in this lemma.
\end{proof}

\begin{remark}
By Lemma 2 in \cite{Imbert2015} and Lemma \ref{pw-go}, the matrices ${\mathcal C}^e_{(n)}$ and $\hat{\mathcal C}_{n}$ are of full-column rank $2n+1$ (for a sufficiently large $\omega$).
 Thus $Im(\hat{\mathcal C})$ and $\mathbb{S}_n$ have the same dimension $2n+1$, which implies that $Im(\hat{\mathcal C})=\mathbb{S}_n$ ($\ni b^{(n)}$) by Lemma \ref{Mdecom}.
 This means that the system (\ref{equivlinearnew}) has a unique solution.
\end{remark}

\begin{lemma} Assume that $h$ is sufficiently small. Let $q$ be defined in Theorem 2.1 or Theorem 2.2, and let $\varphi_l$ denote a basis function defined in Definition 2.1 (associated with an element $K_0$).
Then, for any two positive integers $r,j$, we have
\begin{equation}
|\partial_x^r\partial_y^j\varphi_l({\bf r})|\leq r!j!\omega^{r+j},\quad \forall {\bf r}\in K_0. \label{3.new_basis}
\end{equation}
\end{lemma}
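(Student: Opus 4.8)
The plan is to exploit the explicit product structure $\varphi_l = a_l e^{i\omega\tau_l}$ together with the two structural facts recorded just before Definition 2.1: the phase $\tau_l$ is an $\omega$-independent polynomial whose gradient equals $\sqrt{\xi(\mathbf{r}_0)}(\cos\theta_l,\sin\theta_l)$ at $\mathbf{r}_0$ (up to $O(h)$ corrections on $K_0$), so that all derivatives of $\tau_l$ are bounded on $K_0$ uniformly in $\omega$, and the amplitude $a_l=\sum_s(i\omega)^{-s}a_{l,s}$ has all of its derivatives bounded on $K_0$ uniformly in $\omega$, with the normalization $a_l(\mathbf{r}_0)=1$. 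Since $\tau_l$ is real, $|e^{i\omega\tau_l}|\equiv 1$, so the only source of growth in $\omega$ is the differentiation of the exponential. First I would apply the Leibniz rule to distribute $\partial_x^r\partial_y^j$ across the product, writing $\partial_x^r\partial_y^j\varphi_l$ as the sum over $r_1+r_2=r$, $j_1+j_2=j$ of $\binom{r}{r_1}\binom{j}{j_1}(\partial_x^{r_1}\partial_y^{j_1}a_l)(\partial_x^{r_2}\partial_y^{j_2}e^{i\omega\tau_l})$.

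Next I would expand the exponential factor by the multivariate Fa\`a di Bruno formula, obtaining $\partial_x^{r_2}\partial_y^{j_2}e^{i\omega\tau_l}=e^{i\omega\tau_l}\sum_{\pi}(i\omega)^{|\pi|}\prod_{B\in\pi}\partial^B\tau_l$, where $\pi$ runs over the set partitions of the $r_2$ copies of $\partial_x$ and $j_2$ copies of $\partial_y$, $|\pi|$ is the number of blocks, and $\partial^B\tau_l$ is the mixed derivative of $\tau_l$ indexed by the block $B$. The term of highest order in $\omega$ is the all-singleton partition, contributing $(i\omega)^{r_2+j_2}(\partial_x\tau_l)^{r_2}(\partial_y\tau_l)^{j_2}$, while every coarser partition carries a strictly smaller power of $\omega$. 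Combining this with Leibniz, the dominant contribution to $\partial_x^r\partial_y^j\varphi_l$ is the single term $a_l(i\omega)^{r+j}(\partial_x\tau_l)^r(\partial_y\tau_l)^j e^{i\omega\tau_l}$, which is $O(\omega^{r+j})$, whereas all remaining terms (those with $r_1+j_1>0$, or with a non-singleton partition) are $O(\omega^{r+j-1})$.

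To pass from the qualitative bound $O(\omega^{r+j})$ to the explicit constant $r!\,j!$, I would estimate each factor in modulus: $|e^{i\omega\tau_l}|=1$; each $|\partial^B\tau_l|$ and each $|\partial^{\alpha}a_l|$ is bounded on $K_0$ uniformly in $\omega$ and, for $h$ small, close to its value at $\mathbf{r}_0$ (where $|\nabla\tau_l|=\sqrt{\xi(\mathbf{r}_0)}$ and $a_l(\mathbf{r}_0)=1$); and $\omega^{|\pi|}\le\omega^{r+j}$ once $\omega\ge 1$. The factorials then enter purely through the combinatorics: the number of set partitions appearing is bounded by Bell-type numbers, which are themselves dominated by $r!\,j!$, and the Leibniz binomials are absorbed in the same way. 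Collecting these estimates and using the smallness of $h$ together with $\omega$ large (consistent with $\omega h=O(1)$) to absorb the lower-order-in-$\omega$ remainder into the leading term yields $|\partial_x^r\partial_y^j\varphi_l(\mathbf{r})|\le r!\,j!\,\omega^{r+j}$ uniformly for $\mathbf{r}\in K_0$; the uniformity over $K_0$ is immediate since, apart from the unimodular factor $e^{i\omega\tau_l}$, every factor is a polynomial evaluated on a set of diameter $h$.

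I expect the principal obstacle to be the bookkeeping that replaces a generic constant $C(\xi)$ by the explicit $r!\,j!$: one must verify that the product of the bounded derivative factors, summed against the partition and binomial multiplicities, is genuinely dominated by $r!\,j!$ \emph{uniformly} in $r$ and $j$, rather than merely by some $r,j$-dependent constant. This forces one to keep the magnitudes of $\nabla\tau_l$ and of the $a_{l,s}$ controlled by $1$ on $K_0$ (using the normalization at $\mathbf{r}_0$ and the smallness of $h$), so that the factorial weights arising from the partition count are never swamped by a growing $C^{r+j}$ factor. Checking that the lower-order-in-$\omega$ terms can be absorbed uniformly in $r,j$ is the only remaining delicate point, and it is handled by the same largeness of $\omega$.
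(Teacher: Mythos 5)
Your outline (Leibniz plus Fa\`a di Bruno applied to $a_l e^{i\omega\tau_l}$) is a reasonable way to organize the differentiation, and implicitly it is how the paper also extracts the factor $\omega^{r+j}$; but it bypasses the one point that the paper's proof is actually about, replacing it with an assumption that is not available. You treat every derivative of $\tau_l$ and $a_l$ as an $O(1)$ quantity, ``bounded uniformly in $\omega$ and close to its value at $\mathbf{r}_0$''. The obstruction, however, is not $\omega$ but the polynomial degree: under the choice of $q$ in Theorems 2.1--2.2, $q$ grows without bound as $\omega h\to 0$, and $\tau_l$, $a_l$ are polynomials of degree up to $q+2$. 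Differentiating a monomial $(x-x_0)^{q_x}$ with $q_x\sim q$ a total of $r$ times produces the factor $q_x(q_x-1)\cdots(q_x+1-r)$, of size roughly $q^r$, which is unbounded as $h\to 0$; so the premise ``all derivative factors are controlled by $1$'' is exactly what must be proved, not assumed. The paper's proof consists precisely of this estimate: it reduces to monomials and shows $|\partial_x^r(x-x_0)^{q_x}|\le (q_x h)^{q_x-r}\,r!\le r!$ on $K_0$, where the large combinatorial factor is compensated by $|x-x_0|^{q_x-r}\le h^{q_x-r}$ together with the fact that the theorems' $q$ satisfies $q+2\le h^{-1}$ for $h$ sufficiently small --- this coupling is also why the hypothesis ``$h$ sufficiently small'' and the reference to the $q$ of Theorems 2.1--2.2 appear in the statement at all. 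Your closing remark that ``every factor is a polynomial evaluated on a set of diameter $h$'' gestures at this compensation, but you never use it quantitatively, and without it the argument does not close.

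Separately, the combinatorial step that is supposed to produce the explicit constant $r!\,j!$ is false as stated. The number of set partitions entering Fa\`a di Bruno for a derivative of total order $r+j$ is the Bell number $B_{r+j}$, and $B_{r+j}$ is \emph{not} dominated by $r!\,j!$ (for instance $B_4=15>2!\,2!=4$); summing over the Leibniz splittings with their binomial weights only increases the count. Nor can the excess be absorbed by ``largeness of $\omega$'': the sub-leading terms are suppressed only by fixed powers $\omega^{-1},\omega^{-2},\dots$, while their number and the size of their coefficient products grow at least geometrically in $r+j$, so for any fixed $\omega$ the absorption fails once $r+j$ is large enough, whereas the claimed inequality must hold for all $r,j$ with the sharp constant $r!\,j!$. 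So both pillars of your passage from ``$O(\omega^{r+j})$'' to the explicit bound --- the uniform $O(1)$ derivative bounds and the domination of the combinatorics by $r!\,j!$ --- are, respectively, unproven (and nontrivial, being the content of the paper's proof) and incorrect.
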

\begin{proof} By the definition of $\varphi_l$, it suffices to verify that
\begin{equation}
|\partial_x^r\partial_y^j(x-x_0)^{q_x}(y-y_0)^{q_y}|\leq r!j!,\quad \forall (x,y)\in K_0, \label{3.new_large_q}
\end{equation}
where $(x_0,y_0)$ is the barycenter of $K_0$, $q_x$ and $q_y$ are two positive integers satisfying $q_x+q_y\leq q+2$.

Without loss of generality, we assume that $q_x\geq r$. Then
$$
\partial_x^r(x-x_0)^{q_x}=q_x(q_x-1)\cdots(q_x+1-r)(x-x_0)^{q_x-r}. $$
When $r=q_x$, it is clear that $\partial_x^r(x-x_0)^{q_x}=r!$. Let $q_x\geq r+1$. Thus
 \begin{eqnarray*}
|\partial_x^r(x-x_0)^{q_x}|&=&q_x(q_x-1)\cdots (r+1)r\cdots(q_x+1-r)|x-x_0|^{q_x-r}\cr
&=&q_x(q_x-1)\cdots [q_x-(q_x-r-1)]r\cdots(q_x+1-r)|x-x_0|^{q_x-r}\cr
&\leq&(q_x)^{q_x-r}r!h^{q_x-r}\leq (q_xh)^{q_x-r}r!,\quad \forall (x,y)\in K_0.
\end{eqnarray*}
By the definition of $q$, we have $q+2\leq h^{-1}$ for a sufficiently small $h$, which implies that
$q_x\leq h^{-1}$ for a sufficiently small $h$. Therefore, from the above inequality we get
$$ |\partial_x^r(x-x_0)^{q_x}|\leq r!,\quad \forall (x,y)\in K_0. $$
Similarly, we have
$$
|\partial_y^j(y-y_0)^{q_y}|\leq j!,\quad \forall (x,y)\in K_0.
$$
Then we obtain (\ref{3.new_large_q}).
\end{proof}



Now we can prove Theorem \ref{interp} easily.\\
{\bf Proof of Theorem \ref{interp}}. Remark 3.1 tells us that there exists a unique solution $Z^{(n)}$ of the linear
system (\ref{equivlinearnew}). By Lemma \ref{matrx-decom} and Lemma \ref{pw-go}, we have
$$  \hat{{\mathcal C}}_{(n)}=\mathrm{T}_n\Lambda_{(n)}\hat{{\mathcal C}}^e_{(n)}. $$
Then (the matrix $\hat{\mathcal C}^e_{(n)}$ has full-column rank)
$$
	Z^{(n)} = (\hat{{\mathcal C}}^e_{(n)})^{-1}\Lambda_{(n)}^{-1}\mathrm{T}_n^{-1}b^{(n)}.
$$
Hereafter $(\hat{{\mathcal C}}^e_{(n)})^{-1}$ denotes a generalized inverse of the $m_n\times (2n+1)$ matrix $\hat{{\mathcal C}}^e_{(n)}$.
By the definitions of $b^{(n)}$ and $\Lambda_{(n)}$, together with the property $\lim\limits_{\omega\rightarrow+\infty}\|T_n-I\|_{\infty}=0$, we can deduce that (for a sufficiently large $\omega$)
$$ \|\Lambda_{(n)}^{-1}\mathrm{T}_n^{-1}b^{(n)}\|_{\infty,K_0}\leq C(\xi,n)\max\limits_{0\leq k+r\leq n}\omega^{-(k+r)}\|\partial_x^k\partial_y^ru\|_{\infty,K_0}. $$
Notice that the matrix $\hat{{\mathcal C}}^e_{(n)}$ depends only on $n$ and is independent on $\omega$.
Moreover, by the assumption (\ref{2.newregul}) we have
$$ \max\limits_{0\leq k+r\leq n}\omega^{-(k+r)}\|\partial_x^k\partial_y^ru\|_{\infty,K_0}\leq C(\xi). $$
Then we obtain
\begin{equation}
\|Z^{(n)}\|_{\infty} \leq C(\xi, n)
.\label{3.13new}
\end{equation}
On the other hand, from the definitions of the polynomials $T_n(x,y)$ and $P_{l,n}(x,y)$, for $(x,y)\in K_0$ we have
 \begin{eqnarray*}
|T_n(x, y) - \sum_{l=1}^{2n+1} z_l P_{l,n}(x,y)|&=&|\sum_{m=0}^{n} \sum_{k_{1}+k_{2}=m} (b^{(n)})_{k_{1}, k_{2}}(x-x_0)^{k_{1}}(y-y_0)^{k_{2}}\cr
&-&\sum_{l=1}^{2n+1}z_l\sum_{m=0}^{n} \sum_{k_{1}+k_{2}=m} (\mathcal {C}_{(n)})_{k_{1}, k_{2}}^l(x-x_0)^{k_{1}} (y-y_0)^{k_{2}}|.
\end{eqnarray*}
Thus, by the definition of $Z^{(n)}$ and Lemma \ref{Mdecom}, we deduce that
\begin{eqnarray*}
|T_n(x, y) - \sum_{l=1}^{2n+1} z_l P_{l,n}(x,y)|
&=&|\sum_{m=0}^{n} \sum_{k_{1}+k_{2}=m} (\hat{{\mathcal C}}_{(n)} Z^{(n)})_{k_{1}, k_{2}} (x-x_0)^{k_{1}} (y-y_0)^{k_{2}}\cr
&-&\sum_{m=0}^{n} \sum_{k_{1}+k_{2}=m} ({\mathcal C}_{(n)} Z^{(n)})_{k_1,k_2}(x-x_0)^{k_{1}} (y-y_0)^{k_{2}}| \\
&=&|\sum_{m=0}^{n} \sum_{k_{1}+k_{2}=m} (\tilde{{\mathcal C}}_{(n)} Z^{(n)})_{k_1,k_2}(x-x_0)^{k_{1}} (y-y_0)^{k_{2}}|.
\end{eqnarray*}
Then, when $n\leq 4$, (\ref{diffH}) implies that $T_n(x, y)-\sum_{l=1}^{2n+1} z_l P_{l,n}(x,y)=0$. If $n\geq 5$, using (\ref{diffH}) and (\ref{3.13new}) yields
\begin{eqnarray}\label{3.newestimate1}
|T_n(x, y) - \sum_{l=1}^{2n+1} z_l P_{l,n}(x,y)|
&\leq&C(\xi, n)q^2\sum_{k_{1}+k_{2}=5}^n {1\over k_1!k_2!}\omega^{k_1+k_2-q-2}h^{k_1+k_2}\cr
&=&C(\xi, n)q^2\sum_{m=5}^n\omega^{m-q-2}h^{m}.
\end{eqnarray}
We want to estimate the above sum.
Notice that $\omega h\leq C_0$, we need only to consider the term with $m=5$ in the sum of (\ref{3.newestimate1}).
It is easy to check that, when $q\geq {(n-4)\ln(\omega h)^{-1}\over \ln\omega}$ ($n\geq 5$ and $\omega>1$),
we have $q^2\omega^{5-q-2}h^5\leq h^{n+1}\omega^{n+1}$, and so
$$ q^2\sum_{m=5}^n\omega^{m-q-2}h^{m}\leq Ch^{n+1}\omega^{n+1}. $$
Therefore, it follows by (\ref{3.newestimate1}) that
$$ |T_n(x, y) - \sum_{l=1}^{2n+1} z_l P_{l,n}(x,y)|\leq C(\xi, n)h^{n+1}\omega^{n+1},\quad (x,y)\in K_0. $$
Using this inequality and the error estimate (\ref{utaylor}) of the Taylor polynomials $T_n(x,y)$ and $P_{l,n}$, for $(x,y)\in K_0$ we obtain
\begin{eqnarray*}
|u(x, y) - u_p(x, y)|&=& |(u(x, y) - T_n(x, y)) + (T_n(x, y) - \sum_{l=1}^{2n+1} z_l P_{l,n}(x,y)) \cr
&+& (\sum_{l=1}^{2n+1} z_l P_{l,n}(x,y) - u_p(x, y))| \cr
&\leq& C(\xi, n) h^{n+1}\omega^{n+1}.
\end{eqnarray*}
Here we have used the assumption (\ref{2.newregul}) and the inequalities
\begin{eqnarray*}
&&|\sum_{l=1}^{2n+1} z_l P_{l,n}(x,y) - u_p(x, y)|=|\sum_{l=1}^{2n+1} z_l(P_{l,n}(x,y)-\varphi_l(x,y))|\cr
&&=|\sum_{l=1}^{2n+1} z_l\sum_{r+j=n+1}{\partial_x^r\partial_y^j\varphi_l(\xi_x,\xi_y)\over r_!j_!}(x-x_0)^r(y-y_0)^j|\cr
&&\leq C(\xi,n)h^{n+1}\omega^{n+1},\quad\forall (x,y)\in K_0,
\end{eqnarray*}
where the last inequality is derived by the estimate (\ref{3.new_basis}). Then we get the first estimate in (\ref{intererr}).

The more general estimate (\ref {2.new-intererr1}) can be verified in almost the same manner, taking the Taylor formula for the gradient of $u-u_p$, up to the order $k$.\hfill$\Box$
\begin{remark} Theorem \ref{interp2} can be proved in almost the same way as Theorem \ref{interp}, with the following obvious modifications:
use the ($2n$)-order Taylor polynomials of $u$ and the plane wave basis functions $\psi_{l,j}$
and change the supper index $p$ in the sum of (\ref{3.1new}) into $2p$ since there are two basis functions for each plane wave directions. With this changes, the unknown $Z^{(n)}$ in
(\ref{equivlinear}) and  (\ref{equivlinearnew}) should be a ($2p$)-dimensional column vector; moreover, ${\mathcal C}_{(n)}$, $\tilde{\mathcal C}_{(n)}$ and $\hat{\mathcal C}_{(n)}$ become $m_{2n}\times 2p$ matrices.
\end{remark}

\begin{remark} The established order of $h$-convergence of the GOPW approximation $u_p$ is slightly higher than that of the standard plane wave approximation (which is available only for the case of piecewise constant wave numbers) since different techniques are used in this section. However, we fail to derive the desired order of $p$-convergence of $u_p$ by the techniques.
\end{remark}

\section{Variational formulation for nonhomogeneous Helmholtz equations}\label{variation}

This section devotes to the discretization of the nonhomogeneous Helmholtz equation (\ref{nonhomogeneousHelm}) by using the plane wave basis functions constructed in Section 2.
To this end, we adopt the local-global variational method first presented in \cite{Huqy2018} (this method has been extended in \cite{Yuan2019} to the case with variable wave number based on GPW basis
functions).

\subsection{A local-global variational formulation of (\ref{nonhomogeneousHelm})}

Assume that $f$ is defined in a slightly large domain containing $\Omega$ as its subdomain and decompose the solution $u$ of (\ref{nonhomogeneousHelm}) into $u = u^{(1)} + u^{(2)}$, where $u^{(1)}$ is a piecewise particular solution of (\ref{nonhomogeneousHelm}) with the homogeneous boundary condition, and $u^{(2)}$ locally satisfies homogeneous Helmholtz equations.

The following notations are inheritted from \cite{Huqy2018}. Assume that the domain $\Omega$ is strictly star-shaped. Let $\Omega$ be decomposed into the union of some subdomains in the sense that
\begin{equation*}
	\bar{\Omega} = \cup_{k = 1}^{N}\bar{\Omega}_k, \quad \Omega_l\cap\Omega_j = \emptyset \quad \forall l\neq j,
\end{equation*}
where each $\Omega_k$ is star-shaped with respect to a ball, but it may be not a polygon. Let $\mathcal{T}_h$ denote the triangulation comprising the elements $\{\Omega_k\}$, where $h$ denotes
the mesh size of this triangulation, i.e., the diameter of the biggest element. As usual, we assume that $\mathcal{T}_h$ is quasi-uniform. Define
\begin{equation*}
	\Gamma_{lj} = \partial\Omega_l\cap\partial\Omega_j \quad \forall l\neq j\quad\mbox{and}\quad\Gamma_k = \bar{\Omega}_k\cap\partial\Omega \quad (k = 1,\cdots,N).
\end{equation*}
Let $\mathcal{F}_{h}=\bigcup_{k} \partial \Omega_{k}$ denote the skeleton of the mesh. Set
$$ \mathcal{F}_{h}^{\mathrm{B}}=\mathcal{F}_{h} \cap \partial \Omega=\cup_{k=1}^N \Gamma_k $$
and
$$ \mathcal{F}_{h}^{\mathrm{I}}=\mathcal{F}_{h} \backslash \mathcal{F}_{h}^{\mathrm{B}}=\bigcup\limits_{lj}\Gamma_{lj}.$$
On every interface $\partial \Omega_{l} \cap \partial \Omega_{j}$, define
\begin{equation*}
	\begin{array}{l}
	{\text { the averages: }\{\{u\}\} :=\frac{u_{I}+u_{j}}{2}, \quad \{\{\boldsymbol{\sigma}\}\} :=\frac{\boldsymbol{\sigma}_{l}+\boldsymbol{\sigma}_{j}}{2}} \\
	{\text { the jumps: }[[u]]_{N} = \mathbf{n}_{l} \cdot u_{l} + \mathbf{n}_{j} \cdot u_{j}, \quad [[\boldsymbol{\sigma}]]_{N} = \mathbf{n}_{l} \cdot \boldsymbol{\sigma}_{l} + \mathbf{n}_{j} \cdot \boldsymbol{\sigma}_{j}}.
	\end{array}
\end{equation*}

For each element $\Omega_{k}$, let $\Omega_{k}^{*}$ be a fictitious domain that has almost the same size of $\Omega_{k}$ and contains $\Omega_{k}$ as its subdomain. Throughout this paper we assume that each $\Omega_k^{*}$ is a disc (see \cite{Huqy2018}, Remark 2.1), whose radius is chosen as $r_k\approx{h\over 2}$. Let $u^{(1)} \in L^{2}(\Omega)$ be defined as $u^{(1)}|_{\Omega_{k}}=u_{k}^{(1)}|_{\Omega_{k}}$ for each $\Omega_{k}$, where $u_{k}^{(1)} \in H^{1}(\Omega_{k}^{*})$ satisfies the nonhomogeneous local Helmholtz equation on the fictitious domain $\Omega_{k}^{*}$:
\begin{equation}\label{fem}
	\left\{
	\begin{array}{ll}
		{-\Delta u_{k}^{(1)}-\kappa^2 u_{k}^{(1)}=f} & {\text { in } \quad \Omega_{k}^{*}} \\
		{(\partial_{\mathbf{n}_{k}}+i r_k^{-1}) u_{k}^{(1)}=0} & {\text { on } \quad \partial \Omega_{k}^{*}}
	\end{array}(k=1,2, \ldots, N).
	\right.
\end{equation}
The variational formulation of (\ref{fem}) is to find $u_{k}^{(1)}\in H^1(\Omega_k^*)$ such that
\begin{equation}\label{femvf}
	\left\{
	\begin{array}{c}
		{\int_{\Omega_{k}^{*}}(\nabla u_{k}^{(1)} \cdot \nabla \overline{v}_{k}-\kappa^{2} u_{k}^{(1)} \overline{v}_{k}) d \mathbf{r}+i r_k^{-1}\int_{\partial \Omega_{k}^{*}} u_{k}^{(1)} \overline{v}_{k} d \mathbf{r}=\int_{\Omega_{k}^{*}} f \overline{v}_{k} d \mathbf{r}} \\
		{\forall v_{k} \in H^{1}\left(\Omega_{k}^{*}\right)~~(k=1,2, \ldots, N)}
	\end{array}
	\right.
\end{equation}
It is easy to see that $u^{(2)}=u-u^{(1)}$ is uniquely determined by the following homogeneous Helmholtz equations of $u_{k}^{(2)}=u^{(2)} | \Omega_{k}$ :
\begin{equation*}
	-\Delta u_{k}^{(2)}-\kappa^{2} u_{k}^{(2)}=0 \quad \text { in }~~\Omega_{k}~~~(k=1,2, \ldots, N)
\end{equation*}
with the following boundary condition on $\Gamma$ and the interface conditions on $\Gamma_{kj}$:
\begin{equation}
\begin{cases}
	\partial_{\mathbf{n}} u_{k}^{(2)}+i \eta u_{k}^{(2)}=g-(\partial_{\mathbf{n}} u_{k}^{(1)}+i\eta u_{k}^{(1)}) & \text { over $\Gamma_{k}$} \\
	u_{k}^{(2)}-u_{j}^{(2)}=-(u_{k}^{(1)}-u_{j}^{(1)}) & \text { over $\Gamma_{k j}$} \\
	\partial_{\mathbf{n}_{k}} u_{k}^{(2)}+\partial_{\mathbf{n}_{j}} u_{j}^{(2)}=-(\partial_{\mathbf{n}_{k}} u_{k}^{(1)}+\partial_{\mathbf{n}_{j}} u_{j}^{(1)}) & \text { over $\Gamma_{k j}$} \\
	(k \neq j ; k, j=1,2, \ldots, N) &
\end{cases}
\end{equation}

Define
\begin{equation*}
	V(\mathcal{T}_{h})=\{v_{h} \in L^{2}(\Omega) ;~ v_{h} \in H^1(\Omega_{k}) \text { on each } \Omega_{k}\}.
\end{equation*}
Then the standard DG method can be described as: find $u^{(2)}\in V(\mathcal{T}_{h})$ such that
\begin{equation}\label{4.defin}
	\mathcal{A}_{h}(u^{(2)}, v)=\ell_{h}(v), \quad \forall v \in V(\mathcal{T}_{h}),
\end{equation}
where
\begin{eqnarray*}
	\mathcal{A}_{h}(u, v)&=&\int_{\Omega}(\nabla_{h} u \cdot \overline{\nabla_{h} v}-\kappa^{2} u \overline{v}) d \mathbf{r}-\int_{\mathcal{F}_{h}^{1}}[[u]]_{N} \overline{\{\{\nabla_{h} v\}\}} \mathrm{d} s \\
	&+& i\omega^{-1} \int_{\mathcal{F}_{h}^{I}} \beta[[\nabla_{h} u]]_{N} \cdot \overline{[[\nabla_{h} v]]_{N}} \mathrm{d} s+ i \omega \int_{\mathcal{F}_{h}^{I}}\alpha[[u]]_{N} \cdot \overline{[[v]]}_{N} \mathrm{d}s\\
	&-&\int_{\mathcal{F}_{h}^{I}}\{\{\nabla_{h} u\}\} \cdot \overline{[[v]]_{N}} \mathrm{d} s-\omega^{-1}\int_{\mathcal{F}_{h}^{\mathrm{B}}} \delta \eta u(\overline{\nabla_{h} v \cdot \mathbf{n}}) \mathrm{d}s-\int_{\mathcal{F}_{h}^{\mathrm{B}}} \delta(\nabla_{h} u \cdot \mathbf{n}) \overline{v} \mathrm{d} s \\
	&+&i\omega^{-1} \int_{\mathcal{F}_{h}^{\mathrm{B}}} \delta(\nabla_{h} u \cdot \mathbf{n}) \overline{(\nabla_{h} v \cdot \mathbf{n})} \mathrm{d} s + i \int_{\mathcal{F}_{h}^{\mathrm{B}}}(1-\delta) \eta u \overline{v} \mathrm{d} s
\end{eqnarray*}
and
$$
	\ell_{h}(v)=\sum_{k} \int_{\Omega_{k}} f \overline{v} \mathrm{d} \mathbf{r}-\mathcal{A}_{h}(u_{h}^{(1)}, v)+i\omega^{-1} \int_{\mathcal{F}_{h}^{\mathrm{B}}} \delta g \overline{(\nabla_{h} v \cdot \mathbf{n})} \mathrm{d} s+\int_{\mathcal{F}_{h}^{\mathrm{B}}}(1-\delta) g \overline{v} \mathrm{d} s.
$$
As explained in  \cite{Imbert2017}, when the generalized plane waves are employed as the discrete space, the convergence of the standard discontinuous Galerkin method is difficult to establish. Instead a stabilizing term needs to be added to the sesquilinear form, so we define
\begin{equation}
	\mathcal{B}_{h}(u, v)=\mathcal{A}_{h}(u, v)+\frac{i\gamma}{\omega^2}\sum\limits_{k=1}^N \int_{\Omega_k}(\Delta u+\kappa^2 u) \overline{(\Delta v+\kappa^{2} v)} d \mathbf{r},
\end{equation}
where $\gamma>0$ denotes a penalty parameter.

In this paper, we make the simple choices of parameters
\begin{equation*}
	\alpha = \beta =\delta=\gamma = \frac{1}{2}.
\end{equation*}
\subsection{Discretization of the variational problem}

We first give the spectral element discretization of the local nonhomogeneous problems.
Since $\Omega_{k}^{*}$ is a sufficiently smooth domain and $f$ is smooth on $\Omega_{k}^{*}$, the solution $u_{k}^{(1)}$ possesses high regularity on $\Omega_{k}^{*}$. Moreover, the fictitious domain $\Omega_{k}^{*}$ has almost the same size as the element $\Omega_{k}$ Thus the subproblems (\ref{femvf}) should be discretized by the spectral element method, so that the resulting approximate solutions have higher accuracy.

Let $m$ be a positive integer and $D$ be a bounded and connected domain in $\mathbb{R}^{2}$. Let $S_m(D)$ denote the set of polynomials defined on $D$, whose orders are less or equal to $m$.

The discrete variational problems (see Subsection 3.1 in \cite{Huqy2018}) of (\ref{femvf}) are to find $u_{k, h}^{(1)} \in S_{m}(\Omega_{k}^{*})$ such that
\begin{equation}\label{femdvf}
\left\{
	\begin{aligned}
	& \int_{\Omega_{k}^{*}}(\nabla u_{k, h}^{(1)} \cdot \overline{\nabla v_{k, h}}-\kappa^{2} u_{k, h}^{(1)} \overline{v_{k, h}}) \mathrm{d} \mathbf{x}+i r_k^{-1}\int_{\partial \Omega_{k}^{*}}u_{k, h}^{(1)} \overline{v_{k, h}} \mathrm{d} S=\int_{\Omega_{k}^{*}} f \overline{v_{k, h}} \mathrm{d} \mathbf{x} \\
	&\forall v_{k, h} \in S_m\left(\Omega_{k}^{*}\right)(k=1,2, \ldots, N)
\end{aligned}\right.
\end{equation}
Since we choose the fictitious domain $\Omega_{k}^{*}$ to be a disc, the variational problems (\ref{femdvf}) can be solved easily by using the polar coordinate transformation for the calculation of the involved integrations. Define $u_{h}^{(1)} \in \prod_{k=1}^{N} S_{m}(\Omega_{k})$ by $u_{h}^{(1)}|_{\Omega_{k}}=u_{k, h}^{(1)} |_{\Omega_{k}}$.

We use $V_p(\mathcal{T}_{h})$
to denote the space spanned by the local GOPW basis functions $\varphi_{l}$ or $\psi_{l,j}$~($l=1,2, \ldots, p; j=1,2)$. Namely, the space $V_{p,q}(\mathcal{T}_{h})$ can be defined as
\begin{equation}\label{basisf}
V_{p,q}^{(1)}(\mathcal{T}_{h})=\{v\in L^2(\Omega):~v|_{\Omega_k}\in V_{p,q}^{(1)}(\Omega_k),~~k=1,\cdots,N\}
\end{equation}
or
\begin{equation}\label{basisf1}
V_{p,q}^{(2)}(\mathcal{T}_{h})=\{v\in L^2(\Omega):~v|_{\Omega_k}\in V_{p,q}^{(2)}(\Omega_k),~~k=1,\cdots,N\}.
\end{equation}

Now the GOPW discontinuous Galerkin method (GOPWDG) can be described as: seek $u_{h}^{(2)} \in V_{p,q}(\mathcal{T}_{h})$ such that,
\begin{equation}\label{GOPWDG}
	\mathcal{B}_{h}(u_{h}^{(2)}, v_{h})=\ell_{h}(v_{h}), \quad \forall v_{h} \in V_{p,q}(\mathcal{T}_{h})
\end{equation}
The final approximate solution $u_h\in L^2(\Omega)$ is defined by $u_h|_{\Omega_k} = u_h^{(1)}|_{\Omega_k} + u_h^{(2)}|_{\Omega_k}$.

\subsection{Error estimates of the approximate solutions}
In the rest of the paper, for a positive integer $j$ and a bounded and connected domain $D$, let $\|v\|_{j, D}$ and $|v|_{j, D}$ denote the norm and the semi-norm of $v$ on the Sobolev space $H^j(D)$, respectively.

Following the notations in \cite{Huqy2018}, let $\Omega_{\delta}$ be the union of $\Omega$ and the boundary layer with the thickness $\delta$, i.e.
\begin{equation*}
	\Omega_{\delta}=\Omega \cup\{\mathbf{x}: \operatorname{dist}(\mathbf{x}, \partial \Omega)<\delta\},
\end{equation*}
where $\delta$ is a small positive number satisfying
\begin{equation*}
	\Omega_{\delta} \supseteq \cup_{k=1}^N \Omega_k^{*}.
\end{equation*}
Hereafter $C_0$ denotes one constant, which is independent of $\omega$, $h$ and $p$.

As in \cite{Huqy2018}, we can prove the following estimate of $u^{(1)}-u_h^{(1)}$ (notice that we impose a slightly different boundary boundary in (\ref{fem}), so the original assumption
$\omega h\geq c_0$ in \cite{Huqy2018} is unnecessary).
\begin{lemma}\label{approlst}
	Let $m \geq 2$ and $2 \leq s \leq m+1$. Assume that $\omega h \leq C_0$ and $f \in H^{s-2}(\Omega_{\delta})$. Then the following error estimates hold
\begin{equation}\label{4.new-ineq0}
	(\sum_{k=1}^{N}\|u^{(1)}-u_{h}^{(1)}\|_{j, \Omega_{k}}^{2})^{\frac{1}{2}} \leq C(\xi)(\frac{h}{m})^{s-j} \sum_{l=0}^{s-2} \omega^{s-l-1}\|f\|_{l, \Omega_{\delta}} \quad (j=0,1,2).
\end{equation}
\end{lemma}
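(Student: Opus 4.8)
The plan is to prove the estimate element by element, since the local problems (\ref{femvf}) and their discretizations (\ref{femdvf}) are mutually independent; the sum over $k$ will be reassembled only at the very end. Fix a fictitious disc $\Omega_k^*$ and write $e_k = u_k^{(1)} - u_{k,h}^{(1)}$. First I would establish that the continuous impedance problem on $\Omega_k^*$ is well posed and record a stability bound for $u_k^{(1)}$ in terms of $f$. Because $\Omega_k^*$ is a disc (hence smooth and star-shaped) and the boundary coefficient here is $r_k^{-1}\approx 2/h$ rather than $\omega$, a Rellich-type multiplier identity yields stability constants that depend on $h$ and $\omega$ but do not force the lower bound $\omega h\ge c_0$ imposed in \cite{Huqy2018}; this is exactly the simplification announced just after the statement.

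The core of the argument, and the step I expect to be the main obstacle, is a quasi-optimality estimate for the spectral Galerkin solution despite the sign-indefiniteness of the Helmholtz sesquilinear form. I would run a Schatz-type duality (Aubin--Nitsche) argument: the form satisfies a G\aa rding inequality, so once the spectral resolution $h/m$ is fine enough relative to $\omega$ (guaranteed under $\omega h\le C_0$ for $m\ge 2$), the adjoint approximation property forces discrete inf--sup stability and hence $\|e_k\|_{1,\Omega_k^*}\le C\inf_{v\in S_m(\Omega_k^*)}\|u_k^{(1)}-v\|_{1,\Omega_k^*}$, with the constant controlled uniformly in $\omega$; the same duality then supplies the $j=0$ bound. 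The delicate bookkeeping is to keep every power of $\omega$ and of $h/m$ explicit throughout the duality so that no spurious factor of $\omega$ survives.

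It then remains to bound the best polynomial approximation error and to convert the resulting Sobolev norm of $u_k^{(1)}$ into the $\omega$-weighted right-hand side. For the approximation error I would invoke the standard spectral ($hp$) estimate on the disc, $\inf_{v\in S_m(\Omega_k^*)}\|u_k^{(1)}-v\|_{j,\Omega_k^*}\le C(h/m)^{s-j}\|u_k^{(1)}\|_{s,\Omega_k^*}$ for $2\le s\le m+1$; the case $j=2$ is then obtained by combining this approximation bound with the fact that $u_{k,h}^{(1)}\in S_m(\Omega_k^*)$ solves the discrete equation, as in \cite{Huqy2018}. For the regularity factor I would use elliptic (Helmholtz) regularity on the smooth disc with the impedance condition: repeatedly using $\Delta u_k^{(1)}=-\kappa^2 u_k^{(1)}-f$ to trade two derivatives for a factor $\omega^2$ (plus lower-order data) yields $\|u_k^{(1)}\|_{s,\Omega_k^*}\le C(\xi)\sum_{l=0}^{s-2}\omega^{s-l-1}\|f\|_{l,\Omega_k^*}$, which is precisely the weighting appearing in (\ref{4.new-ineq0}).

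Finally I would assemble the global bound. Squaring the local estimates and summing over $k$, then using the quasi-uniformity of $\mathcal{T}_h$, the finite-overlap property of the family $\{\Omega_k^*\}$ and the inclusion $\cup_k\Omega_k^*\subset\Omega_\delta$, the local data norms $\|f\|_{l,\Omega_k^*}$ combine into $\|f\|_{l,\Omega_\delta}$ up to a fixed constant, producing (\ref{4.new-ineq0}) for $j=0,1,2$. The only genuinely new point relative to \cite{Huqy2018} is the replacement of the boundary weight $\omega$ by $r_k^{-1}$; verifying that the stability and duality estimates survive this change, and that it removes the constraint $\omega h\ge c_0$, is where I would concentrate the technical effort.
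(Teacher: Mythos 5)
Your proposal is correct and follows essentially the same route as the paper: the paper gives no detailed proof of this lemma, deferring to \cite{Huqy2018} with the single remark that the impedance weight $r_k^{-1}$ (in place of $\omega$) in (\ref{fem}) removes the original assumption $\omega h \geq c_0$, and your reconstruction --- well-posedness/stability on each fictitious disc, Schatz-type quasi-optimality for the spectral Galerkin solution, $hp$-approximation combined with the $\omega$-weighted regularity shift $\|u_k^{(1)}\|_{s,\Omega_k^*}\leq C(\xi)\sum_{l=0}^{s-2}\omega^{s-l-1}\|f\|_{l,\Omega_k^*}$, and finite-overlap assembly of the local data norms into $\|f\|_{l,\Omega_\delta}$ --- is precisely the argument of that reference adapted to the new boundary weight. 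In particular, you correctly isolated the only genuinely new point relative to \cite{Huqy2018}, namely that the coefficient $r_k^{-1}\approx 2/h$ keeps the scaled impedance term of order one even as $\omega h\to 0$, which is exactly the observation the paper states in lieu of a proof.
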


For the approximations $u_h$ defined in Subsection 4.2, we have error estimates without wave number pollution.
\begin{theorem}
	Let $m$ and $p$ be chosen such that $m \geq 2$ and $p=2n+1$ with an integer $n\geq 2$. For the first type GOPW space $V_{p,q}^{(1)}({\mathcal T}_h)$, we assume that
$2< s\leq\min\{m+1, n+1\}$ and define $q=\max\{2, [{(s-5)\ln (\omega h)^{-1}\over \ln\omega}], [{(s+{1\over 2})\ln(\omega h)^{-1})\over \ln h^{-1}}]\}$;
for the second type GOPW space $V_{p,q}^{(2)}({\mathcal T}_h)$, we assume that
$2< s\leq\min\{m+1, 2n+1\}$ and define $q=\max\{1, [{(s-5)\ln (\omega h)^{-1}\over \ln\omega}], [{(s+{1\over 2})\ln(\omega h)^{-1})\over \ln h^{-1}}]\}$.
Suppose that $f \in H^{s-2}(\Omega_{\delta})$ and $u \in H^s(\Omega)$.
If $\xi(\mathbf{r})={1\over c^2(\mathbf{r})}$ is a piecewise smooth function, then the approximation $u_h$ admits the estimate
\begin{equation}
\| u - u_h \|_{0, \Omega} \leq C(f, \xi)(\frac{\omega h}{m})^{s-2} +C(f, g, \xi, n) (\omega h)^{s-1}. \label{5.error}
\end{equation}
\end{theorem}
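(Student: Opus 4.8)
The plan is to split the total error as $u-u_h = (u^{(1)}-u_h^{(1)}) + (u^{(2)}-u_h^{(2)})$ and bound each piece separately, exploiting the decomposition $u = u^{(1)}+u^{(2)}$ introduced in Subsection 4.1. The first piece is already controlled: by Lemma \ref{approlst} with $j=0$, we have $(\sum_k\|u^{(1)}-u_h^{(1)}\|_{0,\Omega_k}^2)^{1/2}\leq C(\xi)(h/m)^{s}\sum_{l=0}^{s-2}\omega^{s-l-1}\|f\|_{l,\Omega_\delta}$, which after absorbing the $\omega$-dependence into the source-dependent constant yields a bound of the form $C(f,\xi)(\omega h/m)^{s-2}$, matching the first term in \eqref{5.error}. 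So the real work concentrates on $u^{(2)}-u_h^{(2)}$.

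First I would establish the abstract quasi-optimality (Céa-type) estimate for the stabilized DG scheme \eqref{GOPWDG}. This requires showing that the sesquilinear form $\mathcal{B}_h$ is coercive (or satisfies an inf--sup condition) in a suitable DG-energy norm $\|\cdot\|_{DG}$, and that it is consistent for the exact solution $u^{(2)}$. The stabilizing volume term $\frac{i\gamma}{\omega^2}\sum_k\int_{\Omega_k}(\Delta u+\kappa^2 u)\overline{(\Delta v+\kappa^2 v)}$ is exactly what controls the non-Trefftz residual: since our GOPW functions only \emph{approximately} satisfy the homogeneous Helmholtz equation (in the sense of \eqref{2.defin}, $|\Delta v+\kappa^2 v|\leq C|v|h^q$), this term must be handled carefully—it contributes both to coercivity and to the consistency error. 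I would follow the framework of \cite{Imbert2017} and \cite{Huqy2018} to obtain
\begin{equation*}
\|u^{(2)}-u_h^{(2)}\|_{0,\Omega}\leq C\,\omega\,\Big(\inf_{v_h\in V_{p,q}(\mathcal{T}_h)}\|u^{(2)}-v_h\|_{DG} + \text{(stabilization residual)}\Big),
\end{equation*}
where the $\omega$-factor comes from the standard duality/adjoint argument for Helmholtz.

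The heart of the proof is then to bound the best-approximation term using the interpolation estimates of Theorem \ref{interp} (or Theorem \ref{interp2}). Taking $v_h = \pi_p u^{(2)}$ the elementwise GOPW interpolant, the DG-norm involves $L^2$-errors on elements and trace/gradient errors on the skeleton $\mathcal{F}_h$. For each element the regularity hypothesis $u\in H^s(\Omega)$ together with the Helmholtz stability bound \eqref{2.newregul} (which gives $\|u^{(2)}\|_{C^k(K_0)}\leq C(\xi)\omega^k$) feeds directly into \eqref{2.new-intererr}, producing elementwise errors of order $h^{s-l}\omega^{s}$ for $l=0,1$. The careful choice $q=\max\{2,[\tfrac{(s-5)\ln(\omega h)^{-1}}{\ln\omega}],[\tfrac{(s+1/2)\ln(\omega h)^{-1}}{\ln h^{-1}}]\}$ is precisely engineered so that (i) the interpolation error reaches the target order $s$, and (ii) the stabilization residual $h^q$-term is negligible against the same target. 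Summing over all elements and collecting the $\omega$-powers, the $v_h$-approximation contributes $C(\xi,n)(\omega h)^{s-1}$, and after multiplying by the duality factor $\omega$ one recovers the second term $C(f,g,\xi,n)(\omega h)^{s-1}$ in \eqref{5.error}; here the source-and-boundary data dependence enters because the boundary/interface data for the $u^{(2)}$-problem are built from $f$, $g$ and $u^{(1)}$.

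The main obstacle I anticipate is verifying the stability (coercivity/inf--sup) of $\mathcal{B}_h$ on the \emph{non-Trefftz} GOPW space with an $\omega$-tracking that is clean enough to give a pollution-free final bound. Because the basis functions satisfy the Helmholtz equation only up to $\mathcal{O}(h^q)$, the usual Trefftz-DG coercivity argument does not apply verbatim, and one must show that the added stabilization term both restores well-posedness and has a residual that, under the prescribed $q$, is dominated by the interpolation error rather than amplifying the $\omega$-dependence. Coupling this with the $L^2$ best-approximation estimate \eqref{2.new-intererr} (which is stated only in $C^l$, so an auxiliary passage from pointwise $\|\cdot\|_{C^l}$ bounds to the $H^j$-based DG norm via inverse/trace inequalities on quasi-uniform meshes is needed) is where the delicate bookkeeping of powers of $\omega$ and $h$ will be decisive; getting the exponents to line up as $(\omega h)^{s-1}$ and $(\omega h/m)^{s-2}$ is exactly the statement that the method is \emph{weakly pollution-free}.
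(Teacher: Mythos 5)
Your overall architecture matches the paper's: split $u-u_h$ into $(u^{(1)}-u_h^{(1)})+(u^{(2)}-u_h^{(2)})$, control the first piece by Lemma \ref{approlst}, and control the second piece by DG quasi-optimality plus the interpolation theorems, with the stabilization term handled through the choice of $q$ and the property (\ref{2.defin}). However, there are two genuine gaps. The first is your $L^2$-recovery step: you invoke a duality argument with a factor $C\omega$ in front of the DG-norm error. The paper instead proves (following Lemma 3.7 of \cite{Hiptmair2011}) the sharper Poincar\'e-type inequality
\begin{equation*}
\|u^{(2)}-u^{(2)}_h\|_{0,\Omega}\leq C\bigl(\omega^{-{1\over 2}}h^{-{1\over 2}}+\omega^{{1\over 2}}h^{{1\over 2}}\bigr)\,|||u^{(2)}-u^{(2)}_h|||_{\mathcal{F}_h},
\end{equation*}
where $|||\cdot|||_{\mathcal{F}_h}$ is a purely mesh-dependent ($\omega$-weighted skeleton plus stabilization) norm. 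This weighting is decisive: the interpolation error in that norm is only of size $(\omega h)^{1/2}(\omega h)^{s-1}$, so the factor $\omega^{-1/2}h^{-1/2}(1+\omega h)$ yields exactly $(\omega h)^{s-1}$, while your factor $\omega$ would leave $\omega(\omega h)^{s-1/2}$, which grows unboundedly as $\omega\to\infty$ at fixed $\omega h$ --- i.e., with your constant the weakly pollution-free conclusion (\ref{5.error}) cannot be reached, it is not a matter of bookkeeping that "lines up" later.

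The second gap is in your quasi-optimality statement: the discrete right-hand side $\ell_h(v)$ is assembled with the \emph{computed} particular solution $u_h^{(1)}$ (through the term $-\mathcal{A}_h(u_h^{(1)},v)$), not with $u^{(1)}$. Consequently the error equation for $u^{(2)}-u_h^{(2)}$ contains, besides the best-approximation error of $u^{(2)}$, a consistency term driven by $\varepsilon^{(1)}_h=u^{(1)}-u_h^{(1)}$; the paper's bound is
\begin{equation*}
|||u^{(2)}-u^{(2)}_h|||_{\mathcal{F}_h}\leq {1+\sqrt{2}\over 2}\bigl(|||\varepsilon^{(1)}_h|||_{\mathcal{F}^+_h}+|||\varepsilon^{(2)}_h|||_{\mathcal{F}^+_h}\bigr),
\end{equation*}
and the term $|||\varepsilon^{(1)}_h|||_{\mathcal{F}^+_h}$ is estimated via trace inequalities and Lemma \ref{approlst} with $j$ up to $2$ (the $H^2$ bound), which is precisely where the first term $C(f,\xi)(\omega h/m)^{s-2}$ of (\ref{5.error}) comes from. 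Your proposal uses only the $j=0$ case of Lemma \ref{approlst} for the first piece and omits the $\varepsilon^{(1)}_h$ contribution from the DG error equation entirely, so the coupling between the two pieces of the solution is lost. Finally, note that the paper does not re-derive coercivity or an inf-sup condition for $\mathcal{B}_h$ --- the step you flag as your "main obstacle" --- but cites the quasi-optimality already established in \cite{Huqy2018} and \cite{Yuan2019}; resolving that obstacle from scratch is a legitimate alternative route, but your proposal leaves it open rather than closing it.
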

\begin{proof} Using the definitions of $u$ and $u_h$, we have
\begin{equation}
\| u - u_h \|_{0, \Omega} \leq\|u^{(1)}-u^{(1)}_h\|_{0,\Omega}+\|u^{(2)}-u^{(2)}_h\|_{0,\Omega}. \label{4.new-ineq1}
\end{equation}
By Lemma 4.1, we need only to estimate the second term.
Define the broken Sobolev space
$$ H^2({\mathcal T}_h)=\{ v\in
L^2(\Omega):~ v|_{\Omega_k} \in H^2(\Omega_k),~ ~~\forall
\Omega_k\in {\mathcal T}_h \}
$$
and its norm
\begin{eqnarray*}
&|||v|||_{\mathcal{F}_h}^2~:=
\omega^{-1}||\beta^{1/2}\llbracket \nabla v\rrbracket_N||_{0,\mathcal{F}_h^I}^2 +
\omega||\alpha^{1/2}\llbracket v\rrbracket_N||_{0,\mathcal{F}_h^I}^2 \cr &
+\omega^{-1}||\delta^{1/2} \nabla v\cdot {\bf
n}||_{0,\mathcal{F}_h^B}^2 +
\omega||(1-\delta)^{1/2}v||_{0,\mathcal{F}_h^B}^2 + \omega^{-2}\sum\limits_{k=1}^N||\gamma^{1/2}(\Delta + \kappa^2 I)v)||_{0,\Omega_k}^2.
\end{eqnarray*}
Following the proof of Lemma 3.7 in \cite{Hiptmair2011}, we can show the Poincare-type inequality
\begin{equation}
\|u^{(2)}-u^{(2)}_h\|_{0,\Omega}\leq C(\omega^{-{1\over 2}}h^{-{1\over 2}}+\omega^{{1\over 2}}h^{{1\over 2}})|||u^{(2)}-u^{(2)}_h|||_{\mathcal{F}_h}.\label{4.new-ineq2}
\end{equation}

Let $\pi_p: H^2({\mathcal T}_h)\rightarrow V_{p,q}({\mathcal T}_h)$ denote the interpolation operator defined by (\ref{3.interpolation}). Set $\varepsilon^{(1)}_h=u^{(1)}-u^{(1)}_h$ and $\varepsilon^{(2)}_h=u^{(2)}-\pi_pu^{(2)}$.
As in \cite{Huqy2018} and \cite{Yuan2019}, by using (\ref{4.defin}) and (\ref{GOPWDG}) we can verify that
\begin{equation}
|||u^{(2)}-u^{(2)}_h|||_{\mathcal{F}_h}\leq {1+\sqrt{2}\over 2}\big(|||\varepsilon^{(1)}_h|||_{\mathcal{F}^+_h}+|||\varepsilon^{(2)}_h|||_{\mathcal{F}^+_h}\big),
\label{4.new-estim1}
\end{equation}
where $|||\cdot|||_{\mathcal{F}^+_h}$ is an augmented norm of $|||\cdot|||_{\mathcal{F}_h}$ (adding some norms of averages on the local interfaces, see \cite{Hiptmair2011} and \cite{Yuan2019}).
By the trace inequalities and Lemma \ref{approlst}, we can deduce that
\begin{equation}
|||\varepsilon^{(1)}_h|||_{\mathcal{F}^+_h}\leq C\omega^{-{1\over 2}}h^{{1\over 2}}(\sum\limits_{k=1}^N\|u^{(1)}-u^{(1)}_h\|^2_{2,\Omega_k})^{{1\over 2}}\leq C(f,g,\xi)(\omega h)^{{1\over 2}}\big(\frac{\omega h}{m}\big)^{s-2}. \label{4.new-estim2}
\end{equation}
On the other hand, since the $L^{\infty}$ norm on an edge can be controlled by the $L^{\infty}$ norm on the two elements containing the edge, we can prove that
\begin{eqnarray}
|||\varepsilon^{(2)}_h|||_{\mathcal{F}^+_h}&\leq& C\bigg(\sum\limits_{k=1}^N\big(\omega h\|\varepsilon^{(2)}_h\|^2_{C(\Omega_k)}+\omega^{-1}h\|\varepsilon^{(2)}_h\|^2_{C^1(\Omega_k)}\big)\bigg)^{{1\over 2}}\cr
&+&C\omega^{-1}\big(\sum\limits_{k=1}^N||(\Delta + \kappa^2 I)\varepsilon^{(2)}_h||^2_{0,\Omega_k}\big)^{{1\over 2}}\label{4.new-estim3}
\end{eqnarray}
By (\ref{2.defin}) (since $q\leq (\omega h)^{-{1\over 2}}$ for small $\omega h$) and (\ref{3.13new}), we have
$$ |(\Delta+\kappa^2I)\varepsilon^{(2)}_h|=|(\Delta+\kappa^2I)\pi_pu^{(2)}|\leq C(\xi,n)h^q \quad\mbox{on}~~\Omega_k. $$
Thus
\begin{equation}
\omega^{-1}\big(\sum\limits_{k=1}^N||(\Delta + \kappa^2 I)\varepsilon^{(2)}_h||^2_{0,\Omega_k}\big)^{{1\over 2}}\leq C(\xi,n)h^q\omega^{-1}.
\label{4.new-ineq3}
\end{equation}
From the expression of $u$ and Lemma 4.1 of \cite{Huqy2018}, we have
$$ \|u^{(2)}\|^2_{C^s(\Omega_k)}\leq 2(\|u\|^2_{C^s(\Omega_k)}+\|u^{(1)}\|^2_{C^s(\Omega^*_k)})\leq C(f,g,\xi)\omega^{2s}. $$
Then $u^{(2)}$ satisfies the assumptions in Theorem \ref{interp} and Theorem\ref{interp2}. In the assumption $q\geq [{(s-5)\ln (\omega h)^{-1}\over \ln\omega}]$, we can regard $s$ as $n+1$ in Theorem \ref{interp} or as $2n+1$ in Theorem\ref{interp2}. Using Theorem \ref{interp} and Theorem\ref{interp2} (replacing $u$ by $u^{(2)}$), we get
$$ \bigg(\sum\limits_{k=1}^N\big(\|\varepsilon^{(2)}_h\|^2_{C(\Omega_k)}+h^2\|\varepsilon^{(2)}_h\|^2_{C^1(\Omega_k)}\big)\bigg)^{{1\over 2}}\leq C(f,g,\xi, n)h^{s}\omega^s. $$
Plugging this and (\ref{4.new-ineq3}) in (\ref{4.new-estim3}), leads to
\begin{equation}
|||\varepsilon^{(2)}_h|||_{\mathcal{F}^+_h}\leq  C(f, g, \xi, n)(\omega h)^{{1\over 2}} (\omega h)^{s-1}.
\label{4.new-estim4}
\end{equation}
Here we have used the assumption that $q\geq [{(s+{1\over 2})\ln(\omega h)^{-1}\over \ln h^{-1}}]$ (if $h\ll\omega^{-1}$), which implies that $h^q\omega^{-1}\leq h^{s-{1\over 2}}\omega^{s-{1\over 2}}$.
Substituting (\ref{4.new-estim2}) and (\ref{4.new-estim4}) into (\ref{4.new-estim1}) and using (\ref{4.new-ineq2}) yields
$$ \|u^{(2)}-u^{(2)}_h\|_{0,\Omega}\leq C(f,g,\xi)(\frac{\omega h}{m})^{s-2}+C(f, g, \xi, n) (\omega h)^{s-1}. $$
Finally, combing (\ref{4.new-ineq1}) with (\ref{approlst}) and the above inequality, we obtain the desired result.
\end{proof}
\begin{remark} It can be seen from (\ref{5.error}) that the proposed method is {\it weakly pollution-free}. In particular,
for fixed $p$, $q$ and $m$ satisfying the assumptions, and on quasi-uniform, shape-regular sequences of meshes, for
any $\varepsilon > 0$ there is a $\delta>0$ independent of $\omega$ such that
$$ h\omega < \delta\Longrightarrow \|u-u_h\|_0\leq\varepsilon. $$
This property is the same as that of the plane wave-type methods for Helmholtz equations with constant wave numbers (comparing \cite{Hiptmair2011} for homogeneous case and \cite{Huqy2018} for nonhomogeneous case).
\end{remark}

\section{Numerical experiences} \label{numerical}
In this section we apply the geometric optics plane wave DG method combined with local spectral elements (GOPWDG-LSFE) introduced in Section 4
to solving the nonhomogeneous Helmholtz equations with variable wave numbers
\begin{equation}\label{numnonhomo}
\left\{\begin{aligned}
	&-\Delta u - \omega^2\xi u = f \quad \text{in $\Omega$}, \\
	&(\frac{\partial}{\partial \mathbf{n}} + i\omega\sqrt{\xi}) u = g \quad \text{on $\partial\Omega$},
\end{aligned}\right.
\end{equation}
 and we report some numerical results to illustrate the effectiveness of the proposed methods.

Let $\Omega$ be divided into small rectangles. Each rectangle has the same mesh size $h$, where $h$ is the length of the longest edge of the elements. The resulting uniform triangulation is denoted by $\mathcal{T}_h$.
We choose the same number $p$ of discretized plane wave directions in every element. Let $n_e$ denote the number of basis functions per element.

Throughout this section, we use $p$ and $m$ to denote the number of plane wave directions and the order of local spectral elements respectively. As in \cite{Huqy2018}, we choose the optimal $p$ and $m$ satisfying
$p \approx 2m+1$ for the plane wave basis functions of Case 1. For the plane wave basis functions of Case 2, we choose $p$ and $m$ satisfying $p \approx m$ such that the two cases have almost the same numbers of basis functions.

We introduce the relative $L^2$ error
\begin{equation*}
	\text{Err.} = \frac{\|u_{ex} - u_h\|_{L^2(\Omega)}}{\|u_{ex}\|_{L^2(\Omega)}},
\end{equation*}
where $u_{ex}$ is the analytic solution and $u_h$ is the numerical solution. Define $\delta$ to measure the ``pollution effect" in our discrete method by
\begin{equation*}
	\delta = \frac{\log(\text{Err}_2/\text{Err}_1)}{\log(\omega_2/\omega_1)}.
\end{equation*}

\subsection{Example 1: with single wave in a heterogeneous medium}\label{example1}
We consider an example in a heterogeneous medium in the domain $\Omega = [0,1]\times[0,1]$ (see \cite{Engquist2011}): Define $\xi(\mathbf{r}) = \frac{1}{c(\mathbf{r})^2}$ with the velocity field $c(\mathbf{r})$  as a smooth converging lens with a Gaussian profile at the center $(x_0, y_0) = (1/2, 1/2)$
$$c(x, y)=\frac{4}{3}\left(1-\frac{1}{8} \exp \left(-32\left(\left(x-r_{1}\right)^{2}+\left(y-r_{2}\right)^{2}\right)\right)\right). $$

The analytic solution of the problem is given by
$$u_{ex} (x, y) = c(x, y)e^{i\omega xy}.$$
Then the source term is $f_{ex} = -\Delta u_{ex} - \omega^2\xi(\mathbf{r}) u_{ex}$ and the boundary function is chosen as $g_{ex} = (\frac{\partial}{\partial\mathbf{n}} + i\omega)u_{ex}$.

For the first type GOPW basis functions defined by (\ref{bas2}), we choose $q = 2$. The numerical results in Table \ref{Exam2GOPW2herr} show that setting $q = 2$ is enough to obtain the $(n+1)$-order $h$-convergence
for two different values of $p=2n+1$ of the plane wave basis functions.

\begin{table}[H]
\caption{Errors of approximations with respect to $h$ ($\omega = 256$): use the GOPWs of Case 1 ($q=2$).}
\label{Exam2GOPW2herr}
\begin{tabular}{ccccc}
\hline
\multicolumn{1}{l}{} & \multicolumn{2}{c}{$p=9, m=4$}  & \multicolumn{2}{c}{$p=11, m=5$}                        \\ \hline
h                    & \multicolumn{1}{c}{Err.} & \multicolumn{1}{c}{Order} & \multicolumn{1}{c}{Err.} & \multicolumn{1}{c}{Order}  \\ \hline
$\frac{1}{64}$       &           1.749e-1              &           $-$                &            3.207e-2             &           $-$         \\ \hline
$\frac{1}{128}$      &          3.892e-3               &         5.49               &            3.851e-4              &          6.38                  \\ \hline
$\frac{1}{256}$      &          9.348e-5                &        5.38               &            4.961e-6              &          6.28         \\ \hline
$\frac{1}{512}$      &          2.583e-6             &           5.18               &            7.10e-8              &          6.13       \\ \hline
\end{tabular}
\end{table}

For the second type GOPW basis functions defined by (\ref{bas1}), we set $q= 1$, and choose the number $p$ of the plane wave directions and the order number $m$
of polynomials in the local spectral space as $p = 5,7$ ($n = 2, 3$) and $m = 5,7$, respectively. Then we have the number $n_e = 2p$ of the plane wave basis functions each element. The resulting relative $L^2$ norm errors of the approximations generated by the GOPWDG-LSFE method
are listed in Table \ref{Exam2GOPW1herr}.

\begin{table}[H]
\caption{Errors of approximations with respect to $h$ ($\omega = 256$): use the GOPWs of Case 2 ($q=1$).}
\label{Exam2GOPW1herr}
\begin{tabular}{ccccc}
\hline
\multicolumn{1}{l}{} & \multicolumn{2}{c}{$p=5$, $m=5$}                      & \multicolumn{2}{c}{$p=7$, $m=7$}                        \\ \hline
h                    & \multicolumn{1}{c}{Err.} & \multicolumn{1}{c}{Order} & \multicolumn{1}{c}{Err.} & \multicolumn{1}{c}{Order} \\ \hline
$\frac{1}{64}$       &            1.898e-2              &            $-$               &            9.282e-4               &          $-$                 \\ \hline
$\frac{1}{128}$      &           4.217e-4               &         5.49                 &           5.408e-6               &           7.42               \\ \hline
$\frac{1}{256}$      &           9.981e-6               &         5.40                  &           3.415e-8               &           7.31                \\ \hline
$\frac{1}{512}$      &            2.632e-7              &         5.25                  &           2.356e-10                        &       7.18               \\ \hline
\end{tabular}
\end{table}

It shows that setting $q=1$ and $p = 2n+1$ can also obtain the approximations with $(2n+1)$-order $h$-convergence.
The data listed in the above two tables indicate that the orders of $h$-convergence of the proposed methods are slightly higher than the theoretical results.

Now we increase $\omega$ and decrease $h$ such that $\omega h=1$ or $\omega h=2$, and investigate the wave number pollution of the proposed methods.
Set $p =11, 13$ for the GOPWs of Case 1 and $p = 5, 7$ for the GOPWs of Case 2. The data are listed in Table \ref{Exam2bas2pollute} and Table \ref{Exam2bas1pollute}.

\begin{table}[H]
\caption{Little pollution effect: use the GOPWs of Case 1}
\label{Exam2bas2pollute}
\begin{tabular}{ccccc}
\hline
\multicolumn{1}{l}{}                & \multicolumn{2}{c}{$\omega h = 1$, $p = 9$, $m = 4$}    & \multicolumn{2}{c}{$\omega h = 2$, $p = 11$, $m = 5$} \\ \hline
$\omega$             & \multicolumn{1}{c}{Err.} & \multicolumn{1}{c}{$\delta$} & \multicolumn{1}{c}{Err.} & \multicolumn{1}{c}{$\delta$}\\ \hline
$128$ &            1.042e-4              &      $-$          &             5.451e-4             &     $-$      \\ \hline
$256$ &            9.338e-5              &     -0.158      &             5.452e-4             &    0.008       \\ \hline
$512$ &            9.035e-5              &     -0.048      &             5.451e-4             &     -0.003      \\ \hline
\end{tabular}
\end{table}

\begin{table}[H]
\caption{Little pollution effect: use the GOPWs of Case 2}
\label{Exam2bas1pollute}
\begin{tabular}{ccccc}
\hline
\multicolumn{1}{l}{}                & \multicolumn{2}{c}{$\omega h = 1$, $p = m = 5$}    & \multicolumn{2}{c}{$\omega h = 2$, $p = m = 7$} \\ \hline
$\omega$             & \multicolumn{1}{c}{Err.} & \multicolumn{1}{c}{$\delta$} & \multicolumn{1}{c}{Err.} & \multicolumn{1}{c}{$\delta$}\\ \hline
$128$ &          1.042e-5                &        $-$        &                  3.836e-4        &       $-$    \\ \hline
$256$ &         9.979e-6                 &      -0.067          &             3.850e-4         &        0.006   \\ \hline
$512$ &          9.805e-6                &      -0.025          &             3.843e-4             &    -0.003       \\ \hline
\end{tabular}
\end{table}

It shows that the proposed GOPWDG-LSFE methods are weakly pollution-free.

\subsection{Example 2: with two waves in a heterogeneous medium
}\label{example2}
We provide an example in a heterogeneous medium with wave speed of constant gradient (see \cite{Fomel2009}): $\xi(\mathbf{r}) = c_0^2+2\mathbf{G}_0\cdot(\mathbf{r}-\mathbf{r}_0)$ with parameters $c_0 = 1$, $\mathbf{G}_0 = (0.1, -0.2)$ and $\mathbf{r}_0 = (-0.1, -0.1)$. Referring to \cite{Fomel2009}, there are two wave rays crossing in the domain $\Omega = [0,1]\times[0,1]$. The two phase functions are known analytically and they are given by
\begin{equation}
	\phi_j = \bar{c}\sigma_j - \frac{|\mathbf{G}_0|^2}{6}\sigma_j^3, \quad j = 1,2,
\end{equation}
where
\begin{equation}
	\sigma_j = \frac{\sqrt{2(\bar{c} + (-1)^j\sqrt{\bar{c}^2-|\mathbf{G}_0|^2|\mathbf{r}-\mathbf{r}_0|^2})}}{|\mathbf{G}_0|^2}, \quad j = 1,2,
\end{equation}
with
\begin{equation}
	\bar{c} = c_0 + \mathbf{G}_0\cdot(\mathbf{r}-\mathbf{r}_0).
\end{equation}
Then the exact solution is given by
 \begin{equation}
	u_{ex} = exp(i\omega\phi_1)/(xy+i) + exp(i\omega\phi_2)/(x^2+y^2+i).
\end{equation}
The source term is $f_{ex} = -\Delta u_{ex} - \omega^2\xi(\mathbf{r}) u_{ex}$ and the boundary function is chosen as $g_{ex} = (\frac{\partial}{\partial\mathbf{n}} + i\omega)u_{ex}$.

For the first type GOPW basis functions defined by (\ref{bas2}), we choose $q = 2$. The numerical results in Table \ref{Exam3GOPW2herr} show that setting $q = 2$ is enough to obtain the $(n+1)$-order $h$-convergence
for two different values of the number $p=2n+1$ of the plane wave basis functions.

\begin{table}[H]
\caption{Errors of approximations with respect to $h$ ($\omega = 256$): use the first type of GOPW basis functions.}
\label{Exam3GOPW2herr}
\begin{tabular}{ccccc}
\hline
\multicolumn{1}{l}{} & \multicolumn{2}{c}{$p=9$, $m=4$}                      & \multicolumn{2}{c}{$p=11$, $m=5$} \\ \hline
h                    & \multicolumn{1}{c}{Err.} & \multicolumn{1}{c}{Order} & \multicolumn{1}{c}{Err.} & \multicolumn{1}{c}{Order} \\ \hline
$\frac{1}{64}$       &          1.108e-1                &          $-$                 &           3.348e-3               &           $-$           \\ \hline
$\frac{1}{128}$      &         9.424e-4                 &         6.87                  &        2.570e-5                  &         7.02              \\ \hline
$\frac{1}{256}$      &         1.713e-5                 &          5.78                 &        2.470e-7                  &         6.70                \\ \hline
$\frac{1}{512}$      &         3.492e-7                 &          5.61                 &        2.839e-9                  &         6.44                \\ \hline
\end{tabular}
\end{table}

For the second type GOPW basis functions defined by (\ref{bas1}), we choose $q = 1$ and $p=5,7$, where the number of the plane wave basis functions is $n_e = 2p$. The degree $m$
of polynomials in the local spectral space is set as $m = 5$ and $m=7$ when $p = 5$ and $p=7$ respectively. We fix $\omega=256$, but decrease the mesh size $h$. The resulting relative $L^2$ norm errors of the approximations generated by the GOPWDG-LSFE method are listed in Table \ref{Exam3GOPW1herr}.
Setting $q=1$ and $p=2n+1$ can also obtains the $(2n+1)$-order $h$-convergence for two different values of $p$.

\begin{table}[!h]
\caption{Errors of approximations with respect to $h$ ($\omega = 256$): use the second type GOPW basis functions.}
\label{Exam3GOPW1herr}
\begin{tabular}{cllll}
\hline
\multicolumn{1}{l}{} & \multicolumn{2}{c}{$p=5$, $m=5$}                      & \multicolumn{2}{c}{$p=7$, $m=7$}                        \\ \hline
h                    & \multicolumn{1}{c}{Err.} & \multicolumn{1}{c}{Order} & \multicolumn{1}{c}{Err.} & \multicolumn{1}{c}{Order} \\ \hline
$\frac{1}{64}$       &             1.898e-2             &           $-$                &          9.282e-4              &              $-$             \\ \hline
$\frac{1}{128}$      &            4.217e-4              &         5.49                  &     5.412e-6                     &          7.42                 \\ \hline
$\frac{1}{256}$      &            9.981e-6              &         5.40                  &      3.509e-8                    &          7.27                 \\ \hline
$\frac{1}{512}$      &            2.647e-7              &         5.24                  &       2.467e-10               &             7.16         \\ \hline
\end{tabular}
\end{table}

The data listed in the above two tables indicate that the orders of $h$-convergence of the proposed methods are slightly higher than the theoretical results.

Now we increase $\omega$ and decrease $h$ such that $\omega h=1$ or $\omega h=2$, and investigate the wave number pollution of the proposed methods. The numerical results are listed in Tables \ref{Exam3bas2pollute}-\ref{Exam3bas1pollute}.

\begin{table}[H]
\caption{Little pollution effect: use the first type GOPW basis functions}
\label{Exam3bas2pollute}
\begin{tabular}{ccccc}
\hline
\multicolumn{1}{l}{}                & \multicolumn{2}{c}{$\omega h = 1$, $p = 9$, $m = 4$}    & \multicolumn{2}{c}{$\omega h = 2$, $p = 11$, $m = 5$} \\ \hline
$\omega$             & \multicolumn{1}{c}{Err.} & \multicolumn{1}{c}{$\delta$} & \multicolumn{1}{c}{Err.} & \multicolumn{1}{c}{$\delta$}\\ \hline
$128$ &            1.704e-5              &       $-$         &               2.570e-5           &    $-$       \\ \hline
$256$ &            1.713e-5              &      0.007          &               2.570e-5           &      0.001     \\ \hline
$512$ &            1.602e-5              &      -0.097          &               2.568e-5           &     -0.002      \\ \hline
\end{tabular}
\end{table}

\begin{table}[H]
\caption{Little pollution effect: use the second type GOPW basis functions}
\label{Exam3bas1pollute}
\begin{tabular}{ccccc}
\hline
\multicolumn{1}{l}{}                & \multicolumn{2}{c}{$\omega h = 1$, $p = m = 5$}    & \multicolumn{2}{c}{$\omega h = 2$, $p = m = 7$} \\ \hline
$\omega$             & \multicolumn{1}{c}{Err.} & \multicolumn{1}{c}{$\delta$} & \multicolumn{1}{c}{Err.} & \multicolumn{1}{c}{$\delta$}\\ \hline
$128$ &         4.282e-6                 &       $-$         &             9.614e-7             &    $-$       \\ \hline
$256$ &         4.361e-6                 &       0.026         &             8.048e-7             &    -0.257       \\ \hline
$512$ &         4.417e-6                 &       0.018         &             8.103e-7             &    0.010       \\ \hline
\end{tabular}
\end{table}

Tables \ref{Exam3bas2pollute}-\ref{Exam3bas1pollute} show that the GOPWDG-LSFE methods are weakly pollution-free.

\subsection{GOPWDG-LSFE method versus GPWDG-LSFE method}
In this subsection, we compare the numerical performances of the proposed GOPWDG-LSFE method and the {\it generalization plane wave} DG method combined with local spectral elements (GPWDG-LSFE)
for the nonhomogeneous Helmholtz equations (\ref{numnonhomo}). We increase $\omega$ and decrease $h$ such that $\omega h=1$.
We choose the same number $p$ of discretized plane wave directions in each element and the same order $m$ of local spectral elements such that the discrete systems have the same degrees of freedom for the two methods.
We compare the $L^2$ errors of the approximations generated by the GOPWDG-LSFE method and the GPWDG-LSFE method. The numerical results are listed in Table \ref{2Exam2GOPW2herr} and
Table \ref{2Exam3GOPW2herr} for Example 1 and Example 2 respectively.

\begin{table}[H]
\caption{Errors of approximations: use the first type GOPWs ($q=2$) and GPWs ($q=5$) respectively. Example 1 defined in Subsection \ref{example1}.}
\label{2Exam2GOPW2herr}
\begin{tabular}{ccccc}
\hline
\multicolumn{1}{l}{} & \multicolumn{2}{c}{GOPW~($p=11$, $m=5$)}           & \multicolumn{2}{c}{GPW~($p=11$, $m=5$)}             \\ \hline
$\omega$                    & \multicolumn{1}{c}{Err.} & \multicolumn{1}{c}{$\delta$} & \multicolumn{1}{c}{Err.} & \multicolumn{1}{c}{$\delta$}\\ \hline
$128$      &         4.971e-6                  &        $-$                     &     2.831e-5               &       $-$ \\ \hline
$256$      &         4.958e-6                  &        0.004                 &     3.929e-5               &    0.473 \\ \hline
$512$      &         4.941e-6                 &        -0.003                 &     5.468e-5               &    0.477 \\ \hline
\end{tabular}
\end{table}

\begin{table}[H]
\caption{Errors of approximations: the first type GOPWs ($q=2$) and GPWs ($q=5$) respectively. Example 2 defined in Subsection \ref{example2}. }
\label{2Exam3GOPW2herr}
\begin{tabular}{ccccc}
\hline
\multicolumn{1}{l}{} & \multicolumn{2}{c}{GOPW~($p=11$, $m=5$)}           & \multicolumn{2}{c}{GPW~($p=11$, $m=5$)}             \\ \hline
$\omega$                    & \multicolumn{1}{c}{Err.} & \multicolumn{1}{c}{$\delta$} & \multicolumn{1}{c}{Err.} & \multicolumn{1}{c}{$\delta$}\\ \hline
$128$      &        2.439e-7                  &        $-$           		&     2.248e-6                     &       $-$ \\ \hline
$256$      &        2.428e-7                  &        -0.007                 &     3.176e-6               &    0.498 \\ \hline
$512$      &         2.442e-7                 &        0.008                 &     4.452e-6               &   0.487 \\ \hline
\end{tabular}
\end{table}
The data indicate that,
using the same DOFs and $n_e$, the GOPWDG-LSFE method has much smaller $L^2$ errors than the GPWDG-LSFE method.

\subsection{GOPWDG-LSFE method versus the high-order FEM method}
In this subsection we test the previous example 1 to compare numerical performances of GOPWDG-LSFE method (using the basis functions of Case 1) and the high-order FEM method.
Let $k\in \mathbb{N}$ denote the order of polynomials in the consider finite element space. Denote by DOFs the freedoms of the resulting discretized linear systems of the considered methods.

\begin{table}[H]
\centering
\caption{Error comparision: fixing $\omega h = 2$ and increasing $\omega$.}
\label{varyrayu4}
\begin{tabular}{cccccc}
\hline
\multicolumn{1}{l}{}         & $\omega$  & $64$ & $128$ & $256$ & $512$ \\ \hline
\multirow{2}{*}{\begin{tabular}[c]{@{}c@{}}FEM\\ $k=3$\end{tabular}}         & DOFs. &       9409         &        37249         &      148225           &        591361         \\ \cline{2-6}
                             & Err. &        1.234e-2        &      1.286e-2           &       1.356e-2          &        1.480e-2         \\ \hline
\multirow{2}{*}{\begin{tabular}[c]{@{}c@{}}GOPW\\ $m=4, p=9$\end{tabular}} & DOFs. &      9216          &         36864        &    147456     &         589824        \\ \cline{2-6}
                             & Err. &      3.656e-3          &        3.883e-3         &       3.892e-3          &      3.887e-3          \\ \hline
\end{tabular}
\end{table}

It shows that numerical solutions of the proposed GOPWDG-LSFE method posesses much less approximation errors than the high-order FEM method, with almost the same DOFs (choosing same $\omega$ and $h$).

\section{Conclusion}
In this paper we have defined new plane wave type basis functions based on the geometrical optics anasatz for the two dimensional Helmholtz equations with piecewise smooth coefficients.
We have proved best approximate properties of the resulting plane wave spaces. Furthermore we have introduced the GOPW methods combined with local spectral elements for discretization of nonhomogeneous Helmholtz equations with variable coefficients and derived weakly pollution-free error estimates of the resulting approximate solutions. We have also reported some numerical results to illustrate that the approximate solutions generated by the GOPWDG-LFSE method possess high order $h$-convergence.

\bibliographystyle{siamplain}

\begin{thebibliography}{99}
\bibitem{Brown2016} D. L. Brown, D. Gallist and D. Peterseim, Multiscale petrov-Galerkin method for high-frequency heterogeneous
helmholtz equations, In Meshfree methods for PDEs VII. Springer Lecture Notes in Computational Science and Engineering, 2016

\bibitem{Buffa2008}
A. Buffa, and P. Monk. Error estimates for the ultra weak variational formulation of the Helmholtz equation. ESAIM: Mathematical Modelling and Numerical Analysis 42.6 (2008): 925-940.
\bibitem{Cessenat1998}
O. Cessenat, and B. Despres. Application of an ultra weak variational formulation of elliptic PDEs to the two-dimensional Helmholtz problem. SIAM journal on numerical analysis 35.1 (1998): 255-299.

\bibitem{Engquist2011}
B. Engquist and L. Ying, Sweeping preconditioner for the Helmholtz equation: Moving perfectly matched layers, Multiscale Model. Simul., 9(2011), pp. 686-710.

\bibitem{Engquist2003} B. Engquist and O. Runborg,  Computational high frequency wave propagation, Acta Numerica, 2003, pp. 181-266

\bibitem{Fang2017}
J. Fang. Ray-based Finite Element Method for High-frequency Helmholtz Equations. Diss. UC Irvine, 2017.

\bibitem{Fang2016} J. Fang, J. Qian, L. Zepeda-N$\acute{u}\tilde{n}$ez and H. Zhao, Learning dominant wave directions for plane
wave methods for high-frequency Helmholtz equations, Fang et al. Res Math Sci (2017) 4:9

\bibitem{Farhat2003} C. Farhat, I. Harari, and U. Hetmaniuk, A discontinuous Galerkin method with Lagrange multipliers for the solution of Helmholtz problems in the mid-frequency regime, Comput. Methods Appl. Mech. Engrg., 192 (2003), 1389-1419.
\bibitem{Fomel2009}
S. Fomel, S. Luo and H. Zhao, Fast sweeping method for the factored eikonal equation. Journal of Computational Physics, 228(2009), No. 17, pp.6440-6455.

\bibitem{Gabard2007}
G. Gabard. Discontinuous Galerkin methods with plane waves for time-harmonic problems. Journal of Computational Physics, 225(2007), No. 2, 1961-1984.
\bibitem{Gittelson2009}
C. J. Gittelson, R. Hiptmair, and I. Perugia. Plane wave discontinuous Galerkin methods: analysis of the h-version. ESAIM: Mathematical Modelling and Numerical Analysis, 43 (2009): 297-331.
\bibitem{Hiptmair2011}
R. Hiptmair, A. Moiola, and I. Perugia. Plane wave discontinuous Galerkin methods for the 2D Helmholtz equation: analysis of the p-version. SIAM Journal on Numerical Analysis, 49(2011): 264-284.

\bibitem{Hiptmair2012}
R. Hiptmair, A. Moiola, I. Perugia. Error analysis of Trefftz-discontinuous Galerkin methods for the time-harmonic Maxwell equations[J]. Mathematics of Computation, 2012, 82(281):247-268.
\bibitem{Hiptmair2016}
R. Hiptmair, A. Moiola, and I. Perugia. A survey of Trefftz methods for the Helmholtz equation. Building bridges: connections and challenges in modern approaches to numerical partial differential equations. Springer, Cham, 2016. 237-279.

\bibitem{HuY2014} Q. Hu and L. Yuan, A weighted variational formulation based on plane wave basis for discretization of Helmholtz equations, Int. J. Numer. Anal. Model., 11 (2014), 587--607.

\bibitem{Hu2014} Q. Hu and L. Yuan, A plane wave least-squares method for time-harmonic Maxwell's equations in absorbing media, SIAM J. Sci. Comput., 36 (2014), A1911--A1936.

 \bibitem{Huqy2018}
Q. Hu, and L. Yuan. A plane wave method combined with local spectral elements for nonhomogeneous Helmholtz equation and time-harmonic Maxwell equations. Advances in Computational Mathematics 44.1 (2018): 245-275.

\bibitem{HuZ2016} Q. Hu and H. Zhang, Substructuring preconditioners for the systems arising from plane wave discretization of Helmholtz equations, SIAM J. Sci. Comput., 38(2016), pp. A2232-A2261

\bibitem{HuZ2017} Q. Hu and L. Zhao, A two-steps method based on plane wave for nonhomogeneous Helmholtz equations in inhomogeneous media, Numer. Math. Theor. Meth. Appl., 11(2018), No. 3, pp. 453-476

\bibitem{Huttunen2007} T. Huttunen, M. Malinen and P. Monk, Solving Maxwell's equations using the ultra weak variational formulation, J. Comput. Phys., 223 (2007), 731--758.

\bibitem{Imbert2014}
L. M. Imbert-G$\acute{e}$rard and B. Despres. A generalized plane-wave numerical method for smooth nonconstant coefficients[J]. IMA Journal of Numerical Analysis, 2014, 34(3):1072-1103.
\bibitem{Imbert2015} L. M. Imbert-G$\acute{e}$rard. Interpolation properties of generalized plane waves. Numerische Mathematik 131.4(2015):683-711.
\bibitem{Imbert2017}
L. M. Imbert-G$\acute{e}$rard and P. Monk. Numerical simulation of wave propagation in inhomogeneous media using generalized plane waves. ESAIM: Mathematical Modelling and Numerical Analysis 51.4 (2017): 1387-1406.
\bibitem{Jeffreys1925}
H. Jeffreys. On certain approximate solutions of lineae differential equations of the second order. Proceedings of the London Mathematical Society 2.1 (1925): 428-436.
\bibitem{Kline1962}
M. Kline. Electromagnetic theory and geometrical optics[R]. Air Force Cambridge Research Labs LG Hanscom Field Mass, 1962.
\bibitem{Monk1999} P. Monk and D. Wang, A least-squares method for the helmholtz equation, Comput. Methods Appl. Mech. Engrg., 175 (1999), 121--136.
\bibitem{fem}
J. M. Melenk and S. Sauter, Wavenumber explicit convergence analysis for Galerkin discretizations of the Helmholtz equation, SIAM J. Numer. Anal., 49(2011), pp. 1210-
1243.
\bibitem{Shu2018} J. Peng, J. Wang and S. Shu, Adaptive BDDC algorithms for the system arising from plane wave discretization of Helmholtz equations,
Int. J. Numer. Methods Eng. 116(2018): 683-707.
\bibitem{Perugia2016}
I. Perugia, P. Pietra, and A. Russo. A plane wave virtual element method for the Helmholtz problem. ESAIM: Mathematical Modelling and Numerical Analysis, 50(2016): 783-808.
\bibitem{Peterseim2017}
D. Peterseim. Eliminating the pollution effect in Helmholtz problems by local subscale correction. Mathematics of Computation, 86(2017): 1005-1036.
\bibitem{Christiaan2016}
C. C. Stolk. A dispersion minimizing scheme for the 3-D Helmholtz equation based on ray theory. Journal of computational Physics 314 (2016): 618-646.
\bibitem{Haijun2013}
H. Wu. Pre-asymptotic error analysis of CIP-FEM and FEM for the Helmholtz equation with high wave number. Part I: linear version. IMA Journal of Numerical Analysis 34.3 (2013): 1266-1288.
\bibitem{Yuan2019}
L. Yuan, Q. Hu. Generalized plane wave discontinuous Galerkin methods for nonhomogeneous Helmholtz equations with variable wave numbers[J]. International Journal of Computer Mathematics,
97(2020), 920-941.
\bibitem{Lingxue2013}
L. Zhu, and H. Wu. Preasymptotic error analysis of CIP-FEM and FEM for Helmholtz equation with high wave number. Part II: hp version. SIAM Journal on Numerical Analysis 51.3 (2013): 1828-1852.

\end{thebibliography}

\end{document}